\def\longformule#1#2{
\displaylines{ \qquad{#1} \hfill\cr \hfill {#2} \qquad\cr } }
\def\inte#1{
\displaystyle\mathop{#1\kern0pt}^\circ }
\let\pa=\partial
\let\al=\alpha
\let\b=\bar
\let\d=\delta
\let\e=\varepsilon
\let\f=\frac
\let\p=\psi
\let\D=\Delta
\let\wt=\widetilde
\let\wh=\widehat
\def\cA{{\mathcal A}}
\def\cB{{\mathcal B}}
\def\cC{{\mathcal C}}
\def\cF{{\mathcal F}}
\def\cS{{\mathcal S}}
\def\cY{{\mathcal Y}}
\def\pa{\partial}
\def\dB{\dot{B}}
\def\v{{\rm v}}
\def\h{{\rm h}}
\def\bY{\bar{Y}}
\def\Djl{\Delta_j\Delta_\ell^{\rm v}}
\def\virgp{\raise 2pt\hbox{,}}
\def\cdotpv{\raise 2pt\hbox{;}}
\def\eqdefa{\buildrel\hbox{\footnotesize def}\over =}
\def\C{\mathop{\mathbb C\kern 0pt}\nolimits}
\def\DD{\mathop{\mathbb D\kern 0pt}\nolimits}
\def\EE{\mathop{{\mathbb E \kern 0pt}}\nolimits}
\def\K{\mathop{\mathbb K\kern 0pt}\nolimits}
\def\N{\mathop{\mathbb N\kern 0pt}\nolimits}
\def\Q{\mathop{\mathbb Q\kern 0pt}\nolimits}
\def\R{\mathop{\mathbb R\kern 0pt}\nolimits}
\def\SS{\mathop{\mathbb S\kern 0pt}\nolimits}
\def\ZZ{\mathop{\mathbb Z\kern 0pt}\nolimits}
\def\TT{\mathop{\mathbb T\kern 0pt}\nolimits}
\def\P{\mathop{\mathbb P\kern 0pt}\nolimits}
\newcommand{\la}{\lambda}
\newcommand{\Z}{{\ZZ}}
\newcommand{\vv}[1]{\boldsymbol{#1}}
\def\dv{\mbox{div}}
\def\dive{\mathop{\rm div}\nolimits}
\def\no{\noindent}
\def\na{\nabla}
\def\p{\partial}
\def\th{\theta}
\newcommand{\w}[1]{\langle {#1} \rangle}
\newcommand{\beq}{\begin{equation}}
\newcommand{\eeq}{\end{equation}}
\newcommand{\ben}{\begin{eqnarray}}
\newcommand{\een}{\end{eqnarray}}
\newcommand{\beno}{\begin{eqnarray*}}
\newcommand{\eeno}{\end{eqnarray*}}
\newcommand{\andf}{\quad\hbox{and}\quad}
\newcommand{\with}{\quad\hbox{with}\quad}
\newtheorem{defi}{Definition}[section]
\newtheorem{thm}{Theorem}[section]
\newtheorem{lem}{Lemma}[section]
\newtheorem{rmk}{Remark}[section]
\newtheorem{col}{Corollary}[section]
\newtheorem{prop}{Proposition}[section]
\renewcommand{\theequation}{\thesection.\arabic{equation}}
\begin{document}

\title[Global solution of 3-D MHD system]
{On the global solution of 3-D  MHD system with initial data near
equilibrium }
\author[H.  Abidi]{Hammadi Abidi}
\address[H.  Abidi]{D\'epartement de Math\'ematiques
Facult\'e des Sciences de Tunis Campus universitaire 2092 Tunis,
Tunisia} \email{habidi@univ-evry.fr}
\author[P. Zhang]{Ping Zhang} \address[P. Zhang]{Academy of Mathematics $\&$ Systems Science
and  Hua Loo-Keng Key Laboratory of Mathematics, Chinese Academy of
Sciences, Beijing 100190, CHINA.} \email{zp@amss.ac.cn}

\date{Nov. 10, 2015}

\maketitle

\begin{abstract}  In this paper, we prove the global existence of
smooth solutions to the
 three-dimensional incompressible magneto-hydrodynamical  system with  initial
data close enough to the equilibrium state, $(e_3,0).$ Compared with
the the previous works \cite{XLZMHD1, XZ15}, here we present a new
Lagrangian formulation of the system, which is a damped wave
equation and which is non-degenerate only in the direction of the
initial magnetic field. Furthermore, we remove the admissible
condition on the initial magnetic field, which was required in
\cite{XLZMHD1, XZ15}. By using Frobenius Theorem and anisotropic
Littlewood-Paley theory for the Lagrangian formulation of the
system, we achieve the global $L^1$ in time Lipschwitz estimate of
the velocity field, which allows us to conclude the global existence
of solutions to this system. In the case when the initial magnetic
field is a constant vector, the large time decay rate of the
solution is also obtained.
\end{abstract}

\noindent {\sl Keywords:} Inviscid MHD system, Anisotropic
Littlewood-Paley theory,  Lagrangian

 \qquad\qquad
 coordinates\

\vskip 0.2cm

\noindent {\sl AMS Subject Classification (2000):} 35Q30, 76D03  \

\renewcommand{\theequation}{\thesection.\arabic{equation}}

\setcounter{equation}{0}
\section{Introduction}
In this paper, we investigate the  global existence of smooth
solutions to the following three-dimensional incompressible magnetic
hydrodynamical
 system (or MHD in short) with initial data being sufficiently close to the equilibrium state
 $(e_3,0):$
\beq\label{1.1} \left\{\begin{array}{l}
\displaystyle \pa_t b +u\cdot\nabla b=b\cdot\nabla u,\qquad (t,x)\in\R^+\times\R^3, \\
\displaystyle \pa_t  u+u\cdot\nabla u -\Delta u+\na p=b\cdot\nabla b, \\
\displaystyle \dv\, u =\dv\, b= 0, \\
\displaystyle (b,u)|_{t=0}=(b_0,u_0)\with b_0=e_3+\e \phi,
\end{array}\right.
\eeq where $b=(b^1,b^2,b^3)$ denotes the magnetic field, and  $
u=(u^1,u^2,u^3), p$ the velocity and scalar pressure of the fluid
respectively.  This MHD system \eqref{1.1} with zero diffusivity in
 the magnetic field equation  can be applied to model plasmas when
the plasmas are strongly collisional, or the resistivity due to
these collisions are extremely small. One may check the references
\cite{Ca,CP,LL} for detailed explanations to this system.

\medbreak In general,  it is not known whether or not classical
solutions of \eqref{1.1} can develop finite time singularities even
in  two dimension. In the case when there is full magnetic diffusion
in \eqref{1.1}, Duvaut and Lions \cite{DL} established the local
existence and uniqueness of solution in the classical Sobolev space
$H^s(\R^d)$, $s\geq d,$ they also proved the global existence of
solutions to this system with small initial data; Sermange and Temam
\cite{ST} proved the global well-posedness of this system  in the
two space dimension;
 the first author and Paicu \cite{AP} proved similar result as that in \cite{DL} for the so-called inhomogeneous MHD system
  with  initial data in the critical spaces.
With mixed partial dissipation and additional magnetic diffusion in
the two-dimensional MHD system, Cao and Wu \cite{CW} (see also
\cite{CRW}) proved that such a system is globally well-posed for any
data in $H^2(\R^2).$ Very recently, Chemin et al \cite{CMRR} proved
the local well-posedness of \eqref{1.1} with initial data in the
critical Besov spaces. One may check the survey paper \cite{Lin} and
the references therein for the recent progresses in this direction
and also its relations to the incompressible visco-elastic fluid
system.

\medbreak

 Furthermore,
whether there is dissipation or not for the magnetic field of
\eqref{1.1} is a very important problem also from physics of
plasmas. The heating of high temperature plasmas by MHD waves is one
of the most interesting and challenging problems of plasma physics
especially when the energy is injected into the system at the length
scales which are much larger than the dissipative ones. It has been
conjectured that in the three-dimensional MHD system, energy is
dissipated at a rate that is independent of the ohmic resistivity
\cite{ChCa}. In other words, the viscosity (diffusivity) for the
magnetic field equation can be zero yet the whole system may still
be dissipative. As a first step to investigate this problem, Lin and
the second author \cite{LZ} proved the global well-posedness to a
modified three-dimensional MHD system  with initial data
sufficiently close to the equilibrium state (see \cite{Lin-Zhang}
for a simplified proof). This problem was partially solved in 2-D by
Lin, Xu and the second author in \cite{XLZMHD1} and by Xu and the
second author in 3-D in \cite{XZ15} provided that the initial data
is near the equilibrium state $(e_d,0)$ and the initial magnetic
field, $b_0,$ satisfies the following admissible condition, namely
\beq \label{S1eq1} \int_{\R}(b_0-e_3)(Z(t,\al))\,dt=0\quad\mbox{for
all }\quad  \al\in \R^{d-1}\times\{0\} \eeq with $Z(t,\al)$ being
determined by \beno \left\{\begin{array}{l}
\displaystyle \f{d Z(t,\al)}{dt}= b_0(Z(t,\al)), \\
\displaystyle Z(t,\al)|_{t=0}=\al.
\end{array}\right. \eeno
In the 2-D case, the restriction \eqref{S1eq1} was removed by Ren,
Wu, Xiang and Zhang in \cite{RWXZ} by carefully exploiting the
divergence structure of the velocity field. Moreover, the authors
proved that \beq\label{S1eq2}
\|\p_{x_2}^kb(t)\|_{L^2}+\|\p_{x_2}^ku(t)\|_{L^2}\leq
C\w{t}^{-\f{s+k}2} \quad\mbox{for any }\quad s\in ]0, 1/2[\andf
k=0,1,2. \eeq A more elementary existence proof was also given by
Zhang in \cite{ZhangTing}. Very recently,  Ren,  Xiang and Zhang
extended this well-posedness result to the strip domain in
\cite{RXZ}. The goal of this paper is to remove the assumption
\eqref{S1eq1}  and improve the decay estimates \eqref{S1eq2} for the
limiting case $s=\f12$ in three space dimension. \medbreak

Before we present the function spaces we are going to work with in
this context, let us briefly recall some basic facts on
Littlewood-Paley theory (see e.g. \cite{bcd}). Let $\varphi$ and
$\chi$ be smooth functions supported in $\mathcal{C}\eqdefa \{
\tau\in\R^+,\ \frac{3}{4}\leq\tau\leq\frac{8}{3}\}$ and
$\frak{B}\eqdefa \{ \tau\in\R^+,\ \tau\leq\frac{4}{3}\}$
respectively such that
\begin{equation*}
 \sum_{j\in\Z}\varphi(2^{-j}\tau)=1 \quad\hbox{for}\quad \tau>0\quad\mbox{and}\quad  \chi(\tau)+ \sum_{j\geq
0}\varphi(2^{-j}\tau)=1\quad\hbox{for}\quad \tau\geq 0.
\end{equation*}
For $a\in{\mathcal S}'(\R^3),$ we set \beq
\begin{split}
&\Delta_k^\h
a\eqdefa\cF^{-1}(\varphi(2^{-k}|\xi_\h|)\widehat{a}),\qquad
S^\h_ka\eqdefa\cF^{-1}(\chi(2^{-k}|\xi_\h|)\widehat{a}),
\\
& \Delta_\ell^\v a
\eqdefa\cF^{-1}(\varphi(2^{-\ell}|\xi_3|)\widehat{a}),\qquad \
S^\v_\ell a \eqdefa \cF^{-1}(\chi(2^{-\ell}|\xi_3|)\widehat{a}),
 \quad\mbox{and}\\
&\Delta_ja\eqdefa\cF^{-1}(\varphi(2^{-j}|\xi|)\widehat{a}),
 \qquad\ \ \
S_ja\eqdefa \cF^{-1}(\chi(2^{-j}|\xi|)\widehat{a}), \end{split}
\label{1.0}\eeq where  $\xi_\h=(\xi_1,\xi_2),$ $\xi=(\xi_\h,\xi_3),$
$\cF a$ and $\widehat{a}$ denote the Fourier transform of the
distribution $a.$ The dyadic operators satisfy the property of
almost orthogonality:
\begin{equation}\label{C4}
\Delta_k\Delta_j a\equiv 0 \quad\mbox{if}\quad| k-j|\geq 2
\quad\mbox{and}\quad \Delta_k( S_{j-1}a \Delta_j b) \equiv
0\quad\mbox{if}\quad| k-j|\geq 5.
\end{equation}
Similar properties hold for $\D_k^\h$ and $\D_\ell^\v.$

\begin{defi}[Definition 2.15 of \cite{bcd}]\label{def1}
{\sl   Let $(p,r)\in[1,+\infty]^2,$ $s\in\R$ and $a\in{\mathcal
S}_h'(\R^3),$  which means $a\in\cS'(\R^d)$ and
$\lim_{j\to-\infty}\|\chi(2^{-j}D)a\|_{L^\infty}$ $=0,$  we set
$$
\|a\|_{{\dB}^s_{p,r}}\eqdefa\Big(2^{js}\|\Delta_j
a\|_{L^{p}}\Big)_{\ell ^{r}}.
$$
\begin{itemize}

\item
For $s<\frac{3}{p}$ (or $s=\frac{3}{p}$ if $r=1$), we define $
{\dB}^s_{p,r}(\R^3)\eqdefa \big\{a\in{\mathcal S}_h'(\R^3)\;\big|\;
\| a\|_{{\dB}^s_{p,r}}<\infty\big\}.$

\item
If $k\in\N$ and $\frac{3}{p}+k-1\leq s<\frac{3}{p}+k$ (or
$s=\frac{3}{p}+k$ if $r=1$), then $ {\dB}^s_{p,r}(\R^3)$ is defined
as the subset of distributions $a\in{\mathcal S}_h'(\R^3)$ such that
$\partial^\beta a\in {\dB}^{s-k}_{p,r}(\R^3)$ whenever $|\beta|=k.$
\end{itemize}
When $p=2$ and $ r=1,$ we denote $\dB^s_{2,1}$ by $\dB^s$ and
$\dB^s(\R^2_{x_\h})$ by $\dB^s_\h.$}
\end{defi}

Due to the anisotropic spectral properties of the linearized
equation to \eqref{1.1} (see Section \ref{sect3} for more
explanation), we need also the following anisotropic type Besov norm
from \cite{LZ, XLZMHD1}:

\begin{defi}\label{def2}
{\sl  Let  $s_1,s_2\in\R,$ $r_1, r_2\in [1,\infty]$ and
$a\in{\mathcal S}_h'(\R^3),$ we define the norm
$$
\|a\|_{\cB^{s_1,s_2}_{r_1,r_2}}\eqdefa \Bigl(2^{js_1} \bigl(2^{\ell
s_2}\|\D_{j}\D^\v_{\ell}a\|_{L^2}\bigr)_{\ell^{r_2}}\Bigr)_{\ell^{r_1}}.
$$
In particular, when $r_1=r_2=1,$ we denote $
\|a\|_{\cB^{s_1,s_2}}\eqdefa \|a\|_{\cB^{s_1,s_2}_{1,1}}$ }
\end{defi}

The main result of this paper is as follows:

\begin{thm}\label{thm1}
{\sl Let $e_3=(0,0,1),$ $b_0=e_3+\e\phi$ with
$\phi=(\phi_1,\phi_2,\phi_3)\in C_c^3(\R^3)$ and $\dive \phi=0,$ let
$u_0\in H^s(\R^3)$ for $s\in ]3/2,3].$ Then there exist sufficiently
small positive constants $\e_0, c_0$ such that if \beq\label{S1eq3}
\|u_0\|_{\dB^{\f12}}\leq c_0 \andf \e\leq \e_0, \eeq  \eqref{1.1}
has a unique global solution $(b, u)$ so that for any $T>0,$
$b-e_3\in C([0,T]; H^{s}(\R^3)),$ $ u\in C([0,T]; H^{s}(\R^3))$ with
$\na u\in L^2(]0,T[;H^s(\R^3))$ and $ \na p\in C([0,T];
H^{s-1}(\R^3)).$ Moreover, in the case when $\e=0,$   and under the
additional  assumption that \beq\label{S1eq5}
\|u_0\|_{\cB^{0,0}}+\|u_0\|_{\cB^{3,0}}+
\|u_0\|_{\cB^{-1,-\f12}_{\infty,\infty}}
+\|u_0\|_{\cB^{3,-\f12}_{2,\infty}}\leq \d_0, \eeq for some $\d_0$
sufficiently small, one has \beq \label{S1eq4}
\|u(t)\|_{H^2}+\|b(t)-e_3\|_{H^2} \leq C\w{t}^{-\f14}\with
\w{t}=\bigl(1+t^2\bigr)^{\f12}. \eeq }\end{thm}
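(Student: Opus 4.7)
The plan is to pass to Lagrangian coordinates in order to exploit the transport structure of the magnetic field equation. Let $Y(t,y)$ denote the flow of $u$, i.e.\ $\pa_t Y(t,y) = u(t,Y(t,y))$ with $Y|_{t=0}=y$. The equation $\pa_t b + u\cdot\nabla b = b\cdot\nabla u$ together with $\dive b_0 = 0$ yields the closed identity $b(t,Y(t,y)) = (b_0(y)\cdot\nabla_y) Y(t,y)$, so the magnetic field is completely determined by the flow and the initial datum $b_0 = e_3 + \e\phi$. Substituting this back into the momentum equation and writing $Y(t,y) = y + W(t,y)$, one obtains a damped wave equation for the displacement $W$ whose principal wave operator is carried by $(b_0(y)\cdot\nabla_y)^2$: it is hyperbolic \emph{only} in the direction of $b_0$, which is precisely the non-degenerate direction advertised in the abstract.

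The main technical difficulty is that $b_0$ is a spatially varying vector field when $\e\neq 0$, so the anisotropic Littlewood-Paley spaces $\cB^{s_1,s_2}$ of Definition~\ref{def2}, whose vertical direction is the fixed axis $e_3$, cannot be applied directly. To bypass this I would invoke the Frobenius/rectification theorem: since $\phi\in C_c^3$ and $\e$ is small, the integral curves of $b_0$ foliate $\R^3$ and admit a global diffeomorphism $\Psi:\R^3\to\R^3$, close to the identity in $C^3$, that straightens $b_0$ into $e_3$. After pulling the Lagrangian system back by $\Psi$, the principal part becomes the constant-coefficient operator $\pa_t^2 - \pa_3^2 - \Delta\pa_t$ and the decomposition $\D_j\D_\ell^\v$ becomes the natural tool. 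Crucially, this replaces the admissibility condition \eqref{S1eq1} used in \cite{XLZMHD1,XZ15} by the mere smoothness of the straightening $\Psi$.

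With the straightened equation in hand, I would perform anisotropic energy estimates block by block. The damped wave structure yields $L^1_t L^\infty_x$ control of $\nabla W$ provided only that $\|u_0\|_{\dB^{1/2}}$ and $\e$ are sufficiently small, because one full derivative of time-integrability can be recovered from the damping in the non-degenerate direction combined with the parabolic smoothing in the perpendicular directions, as is typical for such mixed-type operators at the critical scaling. This $L^1_t \mathrm{Lip}_x$ bound on $u$ is exactly what is needed to propagate the $H^s$-regularity of $b$ through the transport equation and of $u$ through the heat equation, to keep the Lagrangian map $Y(t,\cdot)$ bilipschitz, and thereby close the global existence bootstrap. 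The main obstacle here will be tracking the two non-constant-coefficient changes of variable simultaneously, namely the Lagrangian-to-Eulerian map $Y$ and the straightening diffeomorphism $\Psi$, so that the error terms they generate remain controllable under the critical smallness $\|u_0\|_{\dB^{1/2}}\leq c_0$ and $\e\leq\e_0$.

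For the decay in the case $\e=0$, the linearization around $(e_3,0)$ has Fourier symbol $\tau^2 + |\xi|^2\tau + \xi_3^2 = 0$, producing a dispersive slow mode $\tau\sim -\xi_3^2/|\xi|^2$ at low vertical frequency and a damped fast mode $\tau\sim -|\xi|^2$. The four anisotropic norms in \eqref{S1eq5} are tailored precisely to this dispersion: using Duhamel's formula against the global solution already produced, together with the anisotropic Littlewood-Paley decomposition and an appropriate high/low split in the vertical frequency $\xi_3$, I would close a bootstrap on the ansatz $\|u(t)\|_{H^2}+\|b(t)-e_3\|_{H^2}\leq C\w{t}^{-1/4}$. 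The subtle point is that $\w{t}^{-1/4}$ is exactly the limiting case $s=1/2$ of the two-dimensional result \eqref{S1eq2} transposed to three dimensions, so each of the four Besov norms in \eqref{S1eq5} is required in order to avoid logarithmic losses at that threshold.
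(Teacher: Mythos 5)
Your existence strategy matches the paper's Section~\ref{Sect2}--\ref{sect4} essentially step for step: Lagrangian formulation yielding $\vv b = \partial_{b_0}X$ and a damped wave equation for the displacement; a Frobenius/rectification diffeomorphism $y(w(z))$ turning $\partial_{b_0}$ into $\partial_{z_3}$ and thereby dispensing with the admissibility condition \eqref{S1eq1}; and anisotropic Littlewood--Paley estimates (Lemma~\ref{S3-Lem1}, Proposition~\ref{Lip}) giving $L^1_t\mathrm{Lip}$ control of the Lagrangian velocity, which closes the global existence via the blow-up criterion of Proposition~\ref{p9}.

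One technical step you omit, and it is not cosmetic: after the straightening, the source of the equation for $Y$ contains the \emph{time-independent} term $\partial_{z_3}b_0(y(w(z)))$, and this cannot be absorbed into the $L^1_t$ framework because it does not decay. The paper must introduce the static correction $\wt Y$ with $\partial_{z_3}\wt Y = e_3 - b_0(y(w(z)))$ (see \eqref{1.2a}, \eqref{S4eq17} and Lemma~\ref{S4lem4}) and work with $\bY = Y - \wt Y$, which solves \eqref{1.2b} with a source $f$ that \emph{is} time-integrable and with a nonzero but fixed initial datum $\bY_0 = -\wt Y$. Your ``writing $Y = y + W$'' does not dispose of this constant forcing, and without the correction the energy inequality does not close. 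Relatedly, you need the smallness of $\na\wt Y$ in $\cB^{1,\f12}$ (Lemma~\ref{S4lem4}) and of the Frobenius matrices $\frak A_1, A_{2,1}, A_{3,1}, A_2^\h, A_3^\h$ (Lemma~\ref{S4lem3}); these require the $C^3_c$ hypothesis on $\phi$ and are where $\e_0$ is actually pinned down.

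For the decay you propose Duhamel's formula on the two-mode propagator plus a vertical frequency split. The paper takes a genuinely different route (Section~5), a Guo--Tice--type Lyapunov argument formulated in Lagrangian variables: one produces $\frac{d}{dt}E(t) + cD(t) \leq 0$ and then shows $E(t)\lesssim (\lambda_0+E_0)^{4/3}D(t)^{1/3}$ via the anisotropic interpolation inequalities of Lemma~\ref{lem1.1} (which use the norms $\cB^{s,-\f12}_{2,\infty}$ and $\cB^{s,-\f12}_{\infty,\infty}$ precisely as weights that quantify how much mass sits at low $\xi_3$), whence $\frac{d}{dt}E + c\lambda^{-4}E^3 \le 0$ and $E(t)\lesssim \w{t}^{-\f12}$. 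This avoids any explicit kernel analysis of $e^{t\lambda_-(\xi)}$, which is delicate because $\lambda_-(\xi) = -2\xi_3^2/\bigl(|\xi|^2(1+\sqrt{1-4\xi_3^2/|\xi|^4})\bigr)$ does not provide uniform-in-$\xi_\h$ decay. The two approaches target the same $\w{t}^{-1/4}$ rate; the energy method buys you a clean ODE comparison with no small-divisor issues, while a Duhamel scheme would force you to confront the degeneracy of the slow mode along $\xi_3=0$ directly. Note also that the four norms in \eqref{S1eq5} are not all ``dispersive'': the first two ($\cB^{0,0}$, $\cB^{3,0}$) are needed for the propagation of isotropic regularities in Proposition~\ref{S6prop1} (to verify the smallness hypotheses \eqref{eq1.3}, \eqref{prop5.2eq1}), and only the last two serve the interpolation/decay step.
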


\begin{rmk}\label{rmk1.1} {\sl (1) Our approach to prove Theorem \ref{thm1} works in both  three space dimension and  two space dimension.
Moreover, for a concise presentation, here we did not optimize the
regularity of the initial magnetic field.

(2) In general, it is impossible to propagate the anisotropic
regularities for the solutions of  hyperbolic systems (it is only
possible for conormal regularities (see \cite{Chemin88} for
instance)). Since we need to use the anisotropic regularities of the
solution in order to prove the decay estimate \eqref{S1eq4},  we are
forced to study the large time  behavior of the solutions to the
Lagrangian formulation of \eqref{1.1}.

 (3) It is easy to observe from \eqref{1.2}, the equivalent Lagrangian
formulation of \eqref{1.1}, that, the solution $(b-e_3,u)$ to
\eqref{1.1} does not decay to zero as time goes to $\infty$ when the
initial magnetic field is not a constant vector. That is the reason
why we only investigate the large time behavior of the solution to
\eqref{1.1}  when $b_0=e_3.$

(4) More detailed decay estimates of the solution in the Lagrangian
coordinate will be presented in Theorem \ref{S2thm1} of Section
\ref{Sect2}.}
\end{rmk}

Let us complete this section by the notations we shall use in this context.\\

\no{\bf Notation.} For any $s\in\R$, we denote by $H^s(\R^3)$ the
classical  $L^2$ based Sobolev spaces with the norm
$\|\cdot\|_{H^s},$ while $\dot{H}^s(\R^3)$ the classical homogenous
Sobolev spaces with the norm $\|\cdot\|_{\dot{H}^s}$. For $a\lesssim
b$, we mean that there is a uniform constant $C,$ which may be
different on different lines, such that  $a\leq Cb,$ and $a\sim b$
means that both $a\lesssim b$ and $b\lesssim a$. We shall denote by
$(a|b)$  the $L^2(\R^3)$ inner product of $a$ and $b.$
$(d_{j,k})_{j,k\in\Z}$ (resp. $(d_j)_{j\in\Z}$) will be a generic
element of $\ell^1(\Z^2)$ (resp. $\ell^1(\Z))$ so that
$\sum_{j,k\in\Z}d_{j,k}=1$ (resp. $\sum_{j\in\Z}d_j=1).$ Finally, we
denote by $L^p_T(L^q_\h(L^r_\v))$ the space $L^p(]0,T[;
L^q(\R_{x_\h}^2;L^r(\R_{x_3})))$ with $x_\h=(x_1,x_2)$.

\setcounter{equation}{0}
\section{Lagrangian formulation of \eqref{1.1}}\label{Sect2}

In view of Proposition 6.1 of \cite{XZ15} (see also Proposition
\ref{p9} below), the main difficulty to prove the existence part of
Theorem \ref{thm1} is to achieve the  $L^1(\R^+; \mbox{Lip}(\R^3))$
estimate of the velocity field to the appropriate approximate
solutions of \eqref{1.1}. Due to the difficulty mentioned in (2) of
Remark \ref{rmk1.1}, we used Lagrangian formulation of \eqref{1.1}
in the previous works \cite{XLZMHD1, XZ15}.

Let us now explain the main idea for the Lagrangian formulation of
\eqref{1.1} in \cite{XLZMHD1, XZ15}. Taking the 3-D case for
example, given $b_0$ satisfying the admissible condition
\eqref{S1eq1}, the authors first construct a matrix $U_0=\bigl(\bar{
b}_0, \tilde{b}_0, b_0\bigr)$ with $\bar{
b}_0=\bigl(\bar{b}^1_0,\bar{b}^2_0,\bar{b}^3_0\bigr)^{t}$ and
$\tilde{
b}_0=\bigl(\tilde{b}^1_0,\tilde{b}^2_0,\tilde{b}^3_0\bigr)^{t},$ so
that there hold \beq \label{S2eq1} \det U_0=1,\quad \dive
\bar{b}_0=0\andf \dive\tilde{b}=0. \eeq Then instead of solving
\eqref{1.1}, the authors proposed to solve
\begin{equation}\label{S2eq1ad}
 \left\{\begin{array}{l}
\displaystyle \p_tU+ u\cdot\na U=\na u U,
\qquad (t,x)\in\R^+\times\R^3, \\
\displaystyle \pa_t u + u\cdot\na u -\D u+\na p=b\cdot\na b, \\
\displaystyle \dv\, u =0\quad\mbox{and}\quad \quad\dv\, U=0, \\
\displaystyle U|_{t=0}=U_0,\quad  u|_{t=0}= u_0.
\end{array}\right.
\end{equation}
Motivated by the Lagrangian formulation of the  visco-elastic system
in \cite{XZZ}, the authors gave the following Lagrangian formulation
of the  System \eqref{S2eq1ad}:
\beq\label{B12}\left\{\begin{aligned}
&Y_{tt}-\Delta_y Y_t-\p_{y_3}^2 Y=(\na_Y\cdot\na_Y-\Delta_y)Y_t-\na_Yq,\\
&\na_y\cdot Y=\na_y\cdot Y_0-\int_0^t(\na_Y-\na_y)\cdot Y_s
ds,\\
&Y|_{t=0}=Y_0,\quad Y_t|_{t=0}=Y_1,
\end{aligned}\right.\eeq
for $Y$ and $\na_Y$ being determined by \eqref{1.1f} below. It is
the restriction \eqref{S2eq1} that requires the admissible condition
\eqref{S1eq1}.

Here we shall give a more direct Lagrangian  formulation of the
System \eqref{1.1}, which will be based on Lemma 1.4 of
\cite{Majda}. In order to do so, let us first give an equivalent
formulation of \eqref{1.1}, which does not involve the pressure
function. Indeed
 we
get, by taking the space divergence to the velocity equation of
\eqref{1.1}, that \beq \label{S2eq2} \Delta p=\dv\,\dv\bigl(b\otimes
b-u\otimes u\bigr),\quad\mbox{or}\quad p\eqdefa
\D^{-1}\dv\,\dv\bigl(b\otimes b-u\otimes u\bigr). \eeq Then
\eqref{1.1} can be equivalently reformulated as \beq\label{S2eq3}
\left\{\begin{array}{l}
\displaystyle \pa_t b +u\cdot\nabla b=b\cdot\nabla u,\qquad (t,x)\in\R^+\times\R^3, \\
\displaystyle \pa_t  u+u\cdot\nabla u -\Delta u+\nabla p=b\cdot\nabla b, \\
\displaystyle (b,u)|_{t=0}=(b_0,u_0),\andf \dv\, b_0=\dv\, u_0=0,
\end{array}\right.
\eeq with $p$ given by \eqref{S2eq2}. And then just as in Chapter 1
of \cite{Chemin98} for the incompressible Euler system, the
divergence free condition of $u$ and $b$ can be derived by the
initial condition $\dv\, b_0=\dv\, u_0=0$ and the evolution equation of
$\dv\,b$ and $\dv\,u.$

Now let $(b,u)$ be a smooth enough solution of \eqref{S2eq3}, we
define the Lagrangian trajectory $X(t,y)$ by \beq \label{1.1b}
\left\{\begin{array}{l}
\displaystyle \f{d}{dt} X(t,y)=u(t,X(t,y)), \\
\displaystyle X(0,y)=y,
\end{array}\right.
\eeq which yields for  $i,j\in \{1,2,3\}$ that \beno \det\Bigl(\f{\p
X}{\p y}\Bigr)=1 \andf \f{d}{dt} \f{\p X^i(t,y)}{\p y_j}=\f{\p
u^i}{\p x_\ell}(t,X(t,y))\f{\p X^\ell(t,y)}{\p y_j}, \eeno and
\beq\label{1.1a} \f{d}{dt}\Bigl(b_0^j(y) \f{\p X^i(t,y)}{\p
y_j}\Bigr)=\Bigl(b_0^j(y)\f{\p X^\ell(t,y)}{\p y_j}\Bigr)\f{\p
u^i}{\p x_\ell}(t,X(t,y)). \eeq On the other hand, it follows from
the magnetic field equation of  \eqref{S2eq3} and \eqref{1.1b} that
\beno \f{d b^i(t,X(t,y))}{dt}=b(t,X(t,y))\cdot\na u^i(t,X(t,y)),
\eeno which together with \eqref{1.1a} ensures that \beq
\label{1.1c} b^i(t,X(t,y))=b_0^j(y)\f{\p X^i(t,y)}{\p
y_j}=b_0(y)\cdot\na_y X^i(t,y)\eqdefa \p_{b_0}X^i(t,y). \eeq For any
smooth function $f$, we deduce from chain rule that \beno \f{\p
f(X(t,y))}{\p y_j}=\bigl(\f{\p f}{\p x_\ell}\bigr)(X(t,y))\f{\p
X^\ell(t,y)}{\p y_j}. \eeno Let us denote the inverse matrix of
$\f{\p X(t,y)}{\p y}$ by $\cA(t,y)=\bigl(a_{ij}(t,y)\bigr).$ Then we
have \beq \label{1.1d} \bigl(\f{\p f}{\p
x_i}\bigr)(X(t,y))=a_{ji}(t,y)\f{\p f(X(t,y))}{\p y_j}\quad
\mbox{or}\quad \bigl(\na_xf\bigr)(X(t,y))=\cA^t\na_y(f(X(t,y)). \eeq
By virtue of \eqref{1.1c} and \eqref{1.1d}, we infer \beq
\label{1.1e}\begin{split} \bigl(b^j\p_jb^i\bigr)(t,
X(t,y))=&b_0^k(y)\f{\p X^j(t,y)}{\p
y_k}a_{\ell j}(t,y)\f{\p b^i(t,X(t,y))}{\p y_\ell}\\
=&b_0^k(y)\d_{k\ell}\p_{ y_\ell}\bigl(\p_{b_0}
X^i(t,y)\bigr)\\
=&\p_{b_0}^2 X^i(t,y). \end{split} \eeq

Let us denote \beq\label{1.1f}
\begin{split}
X(t,y)=&y+\int_0^t u(t',X(t',y))dt'\eqdefa y+Y(t,y),\quad \vv u(t,y)\eqdefa u(t,X(t,y)), \\
 {\vv b}(t,y)\eqdefa &b(t,X(t,y)),\,\, \vv p(t,y)\eqdefa p(t,X(t,y)),\,\, \cA\eqdefa\left(Id+\na_y Y\right)^{-1} \andf \na_Y\eqdefa \cA^{t}\na_y.
\end{split} \eeq
Then thanks to \eqref{S2eq3},  \eqref{1.1c} and \eqref{1.1e},  we
write \beq\label{1.2} \left\{\begin{array}{l}
\displaystyle {\vv b}(t,y) =\p_{b_0}X(t,y),\quad \na_Y\cdot{\vv b}=0,\\
\displaystyle   Y_{tt} -\Delta_y Y_t-\partial_{b_0}^2Y=\p_{b_0}b_0+g,\\
\displaystyle Y_{|t=0}=Y_0=0,\qquad {Y_t}_{|t=0}=Y_1=u_0(y),
\end{array}\right.
\eeq where \beq\label{P}
\begin{split}
&g=\dv_y\bigl[(\mathcal{A}\mathcal{A}^{t}-Id)\na_yY_t\bigr]
-\mathcal{A}^{t}\na_y\vv p,\quad \p_{b_0}\eqdefa b_0\cdot\na_y, \andf\\
& (\Delta_x p)(t, X(t,y))
=\sum_{i,j=1}^3\na_{Y^i}\na_{Y^j}\bigl(\pa_{b_0}X^i\pa_{b_0}X^j-Y^i_tY^j_t\bigr)(t,y).
\end{split}
\eeq Compared with the Lagrangian formulation \eqref{B12} in
\cite{XLZMHD1, XZ15}, $\p_{y_3}^2 Y$ there is now  replaced by
$\partial_{b_0}^2Y,$ which causes new difficulty of the variable
coefficients for the linearized system.

In what follows, we assume that \beq \label{S2eq5}
\mbox{supp}(b_0(x_\h,\cdot)-e_3)\subset [0, K]\andf b_0^3\neq 0.
\eeq

Due to the difficulty of the variable coefficients for the
linearized system of \eqref{1.2}, we shall use Frobenius Theorem
type argument to find a new coordinate system $\{z\}$ so that
$\p_{b_0}=\p_{z_3}.$ Then we can use anisotropic Littlewood-Paley
analysis to achieve the $L^1_t(\mbox{Lip})$ estimate for $Y_t.$
Toward this, let us define \beq\label{S2eq13}
\left\{\begin{array}{l}
\displaystyle \frac{d y_1}{d y_3}=\f{b_0^1}{b_0^3}(y_1,y_2,y_3),\quad y_1|_{y_3=0}=w_1, \\
\displaystyle \frac{d y_2}{d
y_3}=\f{b_0^2}{b_0^3}(y_1,y_2,y_3),\quad y_2|_{y_3=0}=w_2,\\
\displaystyle y_3=w_3,
\end{array}\right.  \eeq
and
 \beq \label{CH}
\begin{split}
& z_1=w_1,\quad z_2=w_2,\quad
z_3=w_3+\int_{0}^{w_3}\Bigl(\f1{b_0^3(y(w))}-1\Bigr)\,dw_3'.
\end{split}
\eeq Then we have \beq\label{CHa}
\begin{split}
&\p_{b_0}f(y)=b_0^3\Bigl(\f{\p y_1}{\p w_3}\f{\p f(y)}{\p y_1}+\f{\p
y_2}{\p w_3}\f{\p f(y)}{\p y_2}+\f{\p f(y)}{\p w_3}\Bigr)\\
&\qquad\quad\  =b_0^3(y(w)))\f{\p f(y(w))}{\p w_3}=\f{\p
f(y(w(z)))}{\p
z_3},\andf\\
& \p_{z_i}\bigl(f(y(w(z)))\bigr) =\f{\p f}{\p y_j}(y(w(z)))\f{\p
y_j(w(z))}{\p
z_i}\quad \mbox{or}\\
& \na_y=\na_Z= {\cB}^{t}(z)\na_z \with {\cB}(z)=\Bigl(\f{\p
y(w(z))}{\p z}\Bigr)^{-1}.
\end{split}
\eeq It is easy to observe that \beno \begin{split}
{\cB}(z)=\Bigl(\f{\p y(w(z))}{\p z}\Bigr)^{-1}=&\Bigl(\f{\p
y(w(z))}{\p w}\times\f{\p w(z)}{\p z}
\Bigr)^{-1}\\
=&\Bigl(\f{\p w(z)}{\p z} \Bigr)^{-1}\Bigl(\f{\p y(w(z))}{\p
w}\Bigr)^{-1}=\Bigl(\f{\p z}{\p w}\Bigr)\Bigl(\f{\p y(w(z))}{\p
w}\Bigr)^{-1}.
\end{split} \eeno
Yet it follows from \eqref{S2eq13} that \beq\label{CHab}
\begin{split}
\Bigl(\f{\p y(w)}{\p w}\Bigr) = & \begin{pmatrix}
1&0&\f{b_0^1}{b_0^3}\\ 0&1&\f{b_0^2}{b_0^3}\\0&0&1\end{pmatrix}+
\begin{pmatrix} \int_0^{w_3}\f{\p}{\p
y_1}\bigl(\f{b_0^1}{b_0^3}\bigr)dy_3'&\int_0^{w_3}\f{\p}{\p
y_2}\bigl(\f{b_0^1}{b_0^3}\bigr)dy_3'&0 \\
\int_0^{w_3}\f{\p}{\p y_1}\bigl(\f{b_0^2}{b_0^3}\bigr)dy_3' & \int_0^{w_3}\f{\p}{\p y_2}\bigl(\f{b_0^2}{b_0^3}\bigr)dy_3' & 0\\
0 & 0 &  0 \end{pmatrix}  \begin{pmatrix} \f{\p y_1}{\p w_1} & \f{\p
y_1}{\p w_2} & \f{\p y_1}{\p w_3}\\  \f{\p y_2}{\p w_1} & \f{\p
y_2}{\p w_2} & \f{\p y_2}{\p w_3}\\  \f{\p y_3}{\p w_1} &
\f{\p y_3}{\p w_2} & \f{\p y_3}{\p w_3}\end{pmatrix}\\
\eqdefa & A_1(y(w))+ A_2(y(w))\Bigl(\f{\p y(w)}{\p w}\Bigr),
\end{split} \eeq which gives \beq \label{1.2fg} \Bigl(\f{\p y(w)}{\p
w}\Bigr) =\bigl(Id-A_2(y(w))\bigr)^{-1}A_1(y(w)). \eeq While it is
easy to observe that \beq \label{1.2fh} \Bigl(\f{\p z(w)}{\p
w}\Bigr) =
\begin{pmatrix}
1&0& 0\\
0&1&0\\
\int_0^{w_3}\f{\p}{\p w_1}\bigl(\f{1}{b_0^3(y(w))}\bigr)dw_3' &
\int_0^{w_3}\f{\p}{\p w_2}\bigl(\f{1}{b_0^3(y(w))}\bigr)dy_3' &
\f{1}{b_0^3}
\end{pmatrix}\eqdefa A_3(w).
\eeq As a consequence, we obtain \beq \label{1.2fk}
\begin{split}
& y(w)=(y_\h(w_\h,w_3),w_3),\quad w(z)=(z_\h,w_3(z)), \andf
y(w(z))=\bigl(y_\h(z_\h,w_3(z)),w_3(z)\bigr),
\\
&{\cB}(z)=A_3(w(z))A_1^{-1}(y(w(z)))\bigl(Id-A_2(y(w(z)))\bigr),
\end{split} \eeq with the matrices $A_1,A_2, A_3$ being given by
\eqref{CHab} and \eqref{1.2fh} respectively.

For simplicity, let us abuse the notation that
$Y(t,z)=Y(t,y(w(z))).$ Then the System \eqref{1.2} becomes
\beq\label{S2eq19} \left\{\begin{array}{l}
\displaystyle  Y_{tt} -\Delta_z Y_t-\partial_{z_3}^2Y=\bigl(\na_Z\cdot \na_Z-\D_z)Y_t+\p_{z_3}b_0(y(w(z)))+g(y(w(z))),\\
\displaystyle Y_{|t=0}=Y_0=0,\qquad
{Y_t}_{|t=0}=Y_1(z)=u_0(y(w(z))),
\end{array}\right.
\eeq for $g$ given by \eqref{1.2}. Since $\p_{z_3}b_0(y(w(z)))$ in
the source term is a time independent function, we now introduce a
correction term $\tilde{Y}$ so that $Y=\tilde{Y}+\bY$ and
\beq\label{1.2a} \p_{z_3}\tilde{Y}(z)=e_3-b_0(y(w(z))). \eeq Then
\beno \p_{z_3}\bigl(\p_{z_3}\tilde{Y}+b_0(y(w(z)))\bigr)=0, \eeno
and $\b Y$ solves
 \beq\label{1.2b}
\left\{\begin{array}{l}
\displaystyle  \bY_{tt} -\Delta_z \bY_t-\partial_{z_3}^2\bY=f,\\
\displaystyle {\bY}_{|t=0}={\bY}_0=-\tilde{Y},\qquad
{\b{Y_t}}_{|t=0}=Y_1,
\end{array}\right.
\eeq with \beq\label{S2-27}
\begin{split}
 \mathcal{A}=&\left(Id+{\cB}^{t}\na_z\wt
Y+{\cB}^{t}\na_z \bY\right)^{-1},\andf\\
f=&{\cB}^{t}\na_z\cdot\bigl[(\mathcal{A}\mathcal{A}^{t}-Id){\cB}^{t}\na_z\b
Y_t\bigr] +{\cB}^{t}\na_z\cdot({\cB}^{t}\na_z\b Y_t\bigr)
-\Delta_z\b Y_t-({\cB}\mathcal{A})^{t}\na_z\vv p.
\end{split}
\eeq In order to handle the term $\na_z\vv p$ in the source term
$f,$ we need the following lemma:

\begin{lem}\label{diff}
 Let $X(y)$ be a $C^1$ diffeomorphism over $\R^3$ and $H$
be a $C^1$ vector field. Then one has \beq \bigl(\dive_x
H\bigr)(X(y))={\rm det}\bigl(\f{\p X}{\p
y}\bigr)^{-1}\dive_y\Bigl({\rm det}\bigl(\f{\p X}{\p
y}\bigr)\bigl(\f{\p X}{\p y}\bigr)^{-1} H(X(y))\Bigr).
\label{1.2fk1} \eeq
\end{lem}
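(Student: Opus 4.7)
\medbreak\noindent\textbf{Proof plan.} The identity \eqref{1.2fk1} is the classical Piola identity, and I would reduce it to the pointwise algebraic statement
\begin{equation*}
\sum_{j=1}^3\pa_{y_j}\bigl(J\,(\cA)_{ji}\bigr)=0\quad\text{for each } i,
\end{equation*}
where $F\eqdef \pa X/\pa y$, $J\eqdef\det F$ and $\cA\eqdef F^{-1}$ with entries $(\cA)_{ij}$. Indeed, the chain rule \eqref{1.1d} gives $(\dive_x H)(X(y))=(\cA)_{ji}\,\pa_{y_j}(H^i(X(y)))$, while the Leibniz rule applied to $\dive_y(J\cA H(X(y)))$, whose $j$-th component equals $J(\cA)_{ji}H^i(X(y))$, yields
\begin{equation*}
J^{-1}\dive_y\bigl(J\cA H(X(y))\bigr)= J^{-1}\pa_{y_j}\bigl(J(\cA)_{ji}\bigr)H^i(X(y)) + (\cA)_{ji}\,\pa_{y_j}\bigl(H^i(X(y))\bigr).
\end{equation*}
The difference between this expression and $(\dive_x H)(X(y))$ is precisely $J^{-1}H^i(X(y))\,\pa_{y_j}(J(\cA)_{ji})$, so the lemma is equivalent to the displayed Piola identity.

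To prove Piola, I would exploit that $J(\cA)_{ji}$ equals (up to sign) the $(i,j)$-cofactor of $F$; in three dimensions,
\begin{equation*}
J(\cA)_{ji}=\tfrac12\,\epsilon_{ikl}\,\epsilon_{jmn}\,\pa_{y_m}X^k\,\pa_{y_n}X^l.
\end{equation*}
Differentiating in $y_j$ produces two terms, each contracting a Hessian of $X$ (symmetric in two indices) against $\epsilon_{jmn}$ (antisymmetric in the same pair), so both vanish identically. This formal computation formally requires $X\in C^2$, whereas the lemma only assumes $C^1$; the standard remedy is mollification, i.e., apply \eqref{1.2fk1} to the smooth approximation $X_\delta=X*\rho_\delta$ (still a $C^1$-diffeomorphism on compact sets for $\delta$ small), and pass to the limit using that both sides of \eqref{1.2fk1} depend continuously on $F$ and $H$ in $C^0$.

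The only real obstacle is bookkeeping of indices in the cofactor expansion; the underlying substance is just that second partials commute. A pleasant alternative that avoids the regularity point altogether is to test both sides of \eqref{1.2fk1} against an arbitrary $\varphi\in C_c^\infty(\R^3)$: the change of variable $x=X(y)$, the relation $(\nabla_x\varphi)(X(y))=F^{-t}\nabla_y(\varphi\circ X)$, and integration by parts in $x$ and $y$ respectively identify the two sides as distributions in $y$, from which \eqref{1.2fk1} follows pointwise by density of $\{\varphi\circ X\}$ in $C_c^\infty(\R^3)$.
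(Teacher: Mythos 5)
Your proposal is correct, and it actually contains two routes. Your primary route — reducing \eqref{1.2fk1} by the Leibniz rule to the Piola identity $\sum_j\pa_{y_j}\bigl(J(\cA)_{ji}\bigr)=0$ and proving the latter by contracting the cofactor formula $J(\cA)_{ji}=\tfrac12\epsilon_{ikl}\epsilon_{jmn}\pa_{y_m}X^k\pa_{y_n}X^l$ against the symmetry of second partials — is a genuinely different argument from the paper's. The paper instead uses precisely what you call the ``pleasant alternative'' at the end: fix a test function $\psi$, set $\bar\psi=\psi\circ X$, change variables $x=X(y)$ in $\int\bar\psi\,(\dive_xH)(X(y))\,dy$, integrate by parts in $x$, pull back via the chain rule \eqref{1.1d}, and integrate by parts again in $y$. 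The weak-formulation route buys exactly what you identify: it never differentiates $\det(\pa X/\pa y)$ or $(\pa X/\pa y)^{-1}$, so the $C^1$ hypothesis suffices with no mollification and no Piola computation. The Piola route is more explicit algebraically but incurs the $C^1\mapsto C^2$ regularity gap you flag.

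One caution on how you close that gap: for $X\in C^1$ the right-hand side of \eqref{1.2fk1} is $\dive_y$ of a merely continuous vector field, so it does not ``depend continuously on $F$ in $C^0$'' as a classical expression. The correct statement is that $J_\delta\cA_\delta\,H(X_\delta(\cdot))\to J\cA\,H(X(\cdot))$ in $C^0_{\mathrm{loc}}$, hence the divergences converge in $\cD'$, and the limit identity holds distributionally (which is all that is used later). Your weak-formulation alternative sidesteps this entirely, and it is the argument the authors write down.
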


\begin{proof} The proof of this lemma basically follows from that of Lemma A.1 in \cite{DM12}, where
the authors proved \eqref{1.2fk1} for the case when $\det\bigl(\f{\p
X}{\p y}\bigr)=1.$ Let $\psi$ be a test function,  we denote
$\bar{\psi}(y)\eqdefa \psi(X(y)).$ Then in view of \eqref{1.1d}, one
has \beno
\begin{split}
\int_{\R^3}\bar{\psi}(y)\bigl(\dive_xH\bigr)(X(y))dy=&\int_{\R^3}{\psi}(x)\bigl(\dive_xH\bigr)(x){\rm
det}\bigl(\f{\p X}{\p y}\bigr)^{-1}dx\\
=&-\int_{\R^3}\na_x\Bigl({\psi}(x){\rm
det}\bigl(\f{\p X}{\p y}\bigr)^{-1}\Bigr)\cdot H(x)dx\\
=&-\int_{\R^3}\na_y\Bigl(\bar{\psi}(y){\rm det}\bigl(\f{\p X}{\p
y}\bigr)^{-1}\Bigr)\bigl(\f{\p X}{\p y}\bigr)^{-1}\bar{H}(y){\rm
det}\bigl(\f{\p X}{\p
y}\bigr)dy\\
=&\int_{\R^3}\bar{\psi}(y){\rm det}\bigl(\f{\p X}{\p
y}\bigr)^{-1}\dive_y\Bigl(\bigl(\f{\p X}{\p
y}\bigr)^{-1}\bar{H}(y){\rm det}\bigl(\f{\p X}{\p
y}\bigr)\Bigr)dy\end{split} \eeno This leads to \eqref{1.2fk1}.
\end{proof}

In particular, if $\det\bigl(\f{\p X}{\p y}\bigr)=1,$ one has \beq
\bigl(\dive_x H\bigr)(X(y))=\dive_y\Bigl(\bigl(\f{\p X}{\p
y}\bigr)^{-1} H(X(y))\Bigr), \label{diver} \eeq which recovers Lemma
A.1 in \cite{DM12}.

Let us now turn to the calculation of the pressure function in the
Lagrangian coordinate. We denote $\cY(t,y)\eqdefa
Y(t,y)-\wt{\cY}(y)$ with $\wt\cY(y)$ being determined by \beq
\label{S2eq35} \p_{b_0}\wt\cY(y)=e_3-b_0(y). \eeq Then in view of
\eqref{1.1f} and \eqref{S2eq35}, we infer
$$
\begin{aligned}
\sum_{i,j=1}^3\na_{Y^i}\na_{Y^j}\bigl(\pa_{b_0}X^i\pa_{b_0}X^j\bigr)
&=\sum_{i,j=1}^3\na_{Y^i}\na_{Y^j}\bigl((b_0^i+\pa_{b_0}Y^i)
(b_0^j+\pa_{b_0}Y^j)\bigr)
\\&
=\sum_{i,j=1}^2\na_{Y^i}\na_{Y^j}\bigl(\pa_{b_0}\cY^i\pa_{b_0}\cY^j\bigr)
+\na^2_{Y^3}(1+\pa_{b_0}\cY^3)^2
\\&\quad +2\sum_{i=1}^2\na_{Y^3}\na_{Y^i}\bigl(\pa_{b_0}\cY^i(1+\pa_{b_0}\cY^3)\bigr).
\end{aligned}
$$
However note that $\na_Y\cdot{\vv b}=0$ and \eqref{S2eq35}, one has
$$
\begin{aligned}
2\na^2_{Y^3}\pa_{b_0}\cY^3+2\sum_{i=1}^2\na_{Y^3}\na_{Y^i}\pa_{b_0}\cY^i
&=2\na_{Y^3}\sum_{i=1}^3\na_{Y^i}\pa_{b_0}\cY^i
\\&
=2\na_{Y^3}\sum_{i=1}^3\na_{Y^i}\pa_{b_0}\bigl(X^i-y^i-{\wt
\cY}^i\bigr)
\\&
=2\na_{Y^3}\na_{Y}\cdot {\vv b}
\\&
=0,
\end{aligned}
$$
which leads to
$$
\begin{aligned}
\sum_{i,j=1}^3\na_{Y^i}\na_{Y^j}\bigl(\pa_{b_0}X^i\pa_{b_0}X^j\bigr)
=\sum_{i,j=1}^3\na_{Y^i}\na_{Y^j}\bigl(\pa_{b_0}\cY^i\pa_{b_0}\cY^j\bigr).
\end{aligned}
$$
As a consequence, we deduce  from \eqref{P} and \eqref{diver} that
\beq \label{S2eq36} \dv_y\bigl(\mathcal{A}\mathcal{A}^{t}\na_y\vv
p\big) =\dv_{y}\Bigl(\mathcal{A}\dv_{y}
\left(\mathcal{A}\bigl(\pa_{b_0}\cY\otimes\pa_{b_0}\cY-\cY_t\otimes\cY_t\bigr)\right)\Bigr).
\eeq On the other hand, it follows \eqref{S2eq35} that
$\wt{\cY}(y(w(z)))$ solves \eqref{1.2a}. Let us fix
$\wt{Y}(z)=\wt{\cY}(y(w(z))).$ Then we find \beno
\cY(y(w(z)))=Y(t,y(w(z)))-\wt{\cY}(y(w(z)))=Y(t,y(w(z)))-\wt{Y}(z)=\bar{Y}(t,z).
\eeno Hence applying  \eqref{1.2fk1} to \eqref{S2eq36}  gives rise
to
$$
\dv_z\bigl(\det(\cB^{-1})\cB\mathcal{A}\mathcal{A}^{t}\cB^{t}\na_z
\vv p\bigr) =\dv_z\Bigl(\cB\mathcal{A}\dv_z
\left(\det(\cB^{-1})\cB\mathcal{A}\bigl(\pa_{3}\b Y\otimes\pa_{3}\b
Y -\b Y_t\otimes\b Y_t\bigr)\right)\Bigr).
$$
This yields
\begin{equation}\label{PR}
\begin{aligned}
\na_z \vv
p=&-\na_z\D_z^{-1}\dv_z\bigl(\det(\cB^{-1})(\cB\mathcal{A}\mathcal{A}^{t}\cB^{t}-Id)\na_z
\vv p\bigr)\\
& -\na_z\D_z^{-1}\dv_z\bigl((\det(\cB^{-1})Id-Id)\na_z \vv p\bigr)
\\&
+\na_z\D_z^{-1}\dv_z\Bigl(\cB\mathcal{A}\dv_z
\left(\det(\cB^{-1})\cB\mathcal{A}\bigl(\pa_{3}\b Y\otimes\pa_{3}\b
Y -\b Y_t\otimes\b Y_t\bigr)\right)\Bigr).
\end{aligned}
\end{equation}

The local well-posedness  of the System \eqref{1.1} implies the
local well-posedness of the System \eqref{1.2} and thus the System
\eqref{1.2b}. In what follows, we shall only use  the System
\eqref{1.2b} to derive the $L^1(\R^+;\mbox{Lip}(\R^3))$ estimate for
the velocity field $u$ of \eqref{1.1} provided that there holds
\eqref{S1eq3}.

To restrict the length of this paper, we shall present the details
concerning the propagation of regularities of $Y$ and $Y_t$ only in
the case when $b_0=e_3,$ (the general result can be done by the same
strategy),  which will be enough for us to  investigate the decay
estimate \eqref{S1eq4}. In this case, $\cB=Id,$  $\wt{Y}=0,$ and
\eqref{1.2} becomes \beq\label{1.2c} \left\{\begin{array}{l}
\displaystyle  Y_{tt} -\Delta_y Y_t-\partial_{y_3}^2Y=f,\\
\displaystyle {Y}_{|t=0}=Y_0=0,\qquad {{Y_t}}_{|t=0}=Y_1,
\end{array}\right.
\eeq with \beq\label{1.2d}
\begin{split}
& \mathcal{A}=(Id+\na_y Y)^{-1},\quad
f=\na_y\cdot\bigl((\mathcal{A}\mathcal{A}^{t}-Id)\na_y Y_t\bigr)
-\mathcal{A}^{t}\na_y\vv p,\andf\\
& \vv p=-\D_y^{-1}\dv_y\bigl((\mathcal{A}\mathcal{A}^{t}-Id)\na_y
\vv p\bigr) +\D_y^{-1}\dv_y\Bigl(\mathcal{A}\dv_y
\bigl(\mathcal{A}\bigl(\pa_{y_3} Y\otimes\pa_{y_3} Y -Y_t\otimes
Y_t\bigr)\bigr)\Bigr). \end{split} \eeq

The main result concerning the propagation of regularities and the
large time decay estimate  for the solutions of \eqref{1.2c} is
listed as follows:

\begin{thm}\label{S2thm1}
{\sl Let $Y_0\in \cB^{2,0}\cap\cB^{5,0}\cap \cB^{0,1}\cap \cB^{3,1}$
and $Y_1 \in \cB^{0,0}\cap\cB^{3,0}.$ Then under the assumption that
\beq\label{Lip0qd} \frak{g}(0)\le c_0 \with \frak{g}(s)\eqdefa
\|\pa_3Y_0\|_{\mathcal{B}^{s,0}} + \| Y_0\|_{\mathcal{B}^{s+2,0}}+\|
Y_1\|_{\mathcal{B}^{s,0}}, \eeq for some $c_0$ sufficiently small,
\eqref{1.2c} has a unique solution $Y$ so that there holds\beq
\label{S2eq29}
\begin{aligned}
  \|Y_t&\|_{\wt L^\infty_t(\mathcal{B}^{s,0})}
+\|\pa_3Y\|_{\wt L^\infty_t(\mathcal{B}^{s,0})}+\|Y\|_{\wt
L^\infty_t(\mathcal{B}^{s+2,0})}+ \|\pa_3 Y\|_{\wt
L^2_t(\mathcal{B}^{s+1,0})}\\
& +\|Y_t\|_{\wt{L}^2_t(\mathcal{B}^{s+1,0})} +\|
Y_t\|_{L^1_t(\mathcal{B}^{s+2,0})}+\|\na\vv p\|_{{L}^1_t(\cB^{s,0})}
\leq  C( c_0+ \frak{g}(s)) \quad\mbox{for}\ \ s=0,3.
\end{aligned}
\eeq If $(Y_0, Y_1)$ satisfies moreover that \beq \label{S2eq27}
\begin{split} \frak{g}(3)+
\|\p_3Y_0\|_{\cB^{-1,-\f12}_{\infty,\infty}}&+\|Y_0\|_{\cB^{1,-\f12}_{\infty,\infty}}+\|Y_1\|_{\cB^{-1,-\f12}_{\infty,\infty}}
\\
&+\|\p_3Y_0\|_{\cB^{3,-\f12}_{2,\infty}}+\|Y_0\|_{\cB^{5,-\f12}_{2,\infty}}+\|Y_1\|_{\cB^{3,-\f12}_{2,\infty}}\leq
\d_0 \end{split}  \eeq for some $\d_0$ sufficiently small,  then the
solution $Y$ of \eqref{1.2c} satisfies the following decay estimate
\beq\label{S2eq26}
\begin{split}
 \|Y_t(t)\|_{H^2} &+\|\p_3
Y(t)\|_{H^2}
 +\|\D Y(t)\|_{H^1}\\
 +\w{t}^{\f18}&\bigl(\|\p_3Y_t(t)\|_{H^1} +\|\p_3^2Y(t)\|_{H^1}
 +\|\p_3Y(t)\|_{\dot H^2}\bigr) \leq C\w{t}^{-\f14}. \end{split} \eeq }
\end{thm}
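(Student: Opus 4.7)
The argument splits into three layers: a linear estimate for the anisotropic damped wave operator $\mathcal{L}\eqdefa\pa_t^2-\D\pa_t-\pa_3^2$, a nonlinear bootstrap that closes \eqref{S2eq29} under the smallness $\frak{g}(0)\le c_0$, and a time-decay step that upgrades the result to \eqref{S2eq26} under the extra hypothesis \eqref{S2eq27}. Local well-posedness for \eqref{1.2c} is inherited from that of \eqref{1.1}, so only the a priori estimates must be proved. Applying $\Delta_j\Delta_\ell^\v$ to $\mathcal{L}Y=f$ and looking at the symbol, the roots of $\lambda^2+|\xi|^2\lambda+\xi_3^2=0$ satisfy $\mathrm{Re}\,\lambda\le -c\min(|\xi|^2,\xi_3^2/|\xi|^2)$, which is precisely the anisotropic dissipation encoded by Definition~\ref{def2}. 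A weighted energy identity built from the multipliers $Y_t$ and $\kappa Y$ for $\kappa$ small (in the spirit of \cite{LZ,XLZMHD1,XZ15}), summed against the weights of Definition~\ref{def2}, then produces the combined control
\[
\|Y_t\|_{\wt L^\infty_T(\cB^{s,0})}+\|\pa_3 Y\|_{\wt L^\infty_T(\cB^{s,0})}+\|Y_t\|_{\wt L^2_T(\cB^{s+1,0})}+\|\pa_3 Y\|_{\wt L^2_T(\cB^{s+1,0})}+\|Y_t\|_{L^1_T(\cB^{s+2,0})}\lesssim \text{data}+\|f\|_{L^1_T(\cB^{s,0})},
\]
and the bound for $\|Y\|_{\wt L^\infty_T(\cB^{s+2,0})}$ follows by time integration.

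For the nonlinear step, the source $f$ in \eqref{1.2d} is, after expanding $\cA=\sum_{k\ge 0}(-\nabla Y)^k$, a sum of terms at least quadratic in $\nabla Y$ and $Y_t$. Since $\cB^{2,0}\hookrightarrow L^\infty$, smallness of $\frak{g}(0)$ makes $\cA\cA^t-I$ small in $\cB^{2,0}$, so Bony-type anisotropic paraproducts (see \cite{bcd,LZ}) yield product laws such as
\[
\|(\cA\cA^t-I)\nabla Y_t\|_{\cB^{s,0}}\lesssim\|\nabla Y\|_{\cB^{2,0}}\|\nabla Y_t\|_{\cB^{s,0}}+\|\nabla Y\|_{\cB^{s,0}}\|\nabla Y_t\|_{\cB^{2,0}}.
\]
The pressure equation \eqref{1.2d} is inverted as a Neumann series in the order-zero operator $\nabla\D^{-1}\dive((\cA\cA^t-I)\,\cdot\,)$, which is small on $\cB^{s,0}$ under the same hypothesis; this produces $\|\nabla\vv p\|_{L^1_T(\cB^{s,0})}\lesssim\|\pa_3 Y\|_{\wt L^2_T(\cB^{s+1,0})}^2+\|Y_t\|_{\wt L^2_T(\cB^{s+1,0})}^2$. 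Plugging these into the linear estimate and running a continuity argument in $T$ closes \eqref{S2eq29}, first for $s=0$ and then, by repeating the scheme with the $s=0$ bounds furnishing the needed $L^\infty$-type factors, for $s=3$.

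For the decay \eqref{S2eq26}, once \eqref{S2eq29} is known at $s=3$ the source is already time-integrable in suitable Besov spaces, so Duhamel's formula reduces the problem to the time decay of the linear semigroup $e^{t\mathcal{L}}$ applied to data in the negative-regularity spaces of \eqref{S2eq27}. At low frequency the slow eigenvalue satisfies $\lambda_-(\xi)\sim -\xi_3^2/|\xi|^2$, and a stationary-phase / heat-kernel analysis in $\xi_3$ gives, for each $a\ge 0$,
\[
\bigl\||\xi|^a e^{\lambda_-(\xi)t}\widehat{Y_0}\bigr\|_{L^2}\lesssim \w{t}^{-a/2-1/4}\bigl(\|Y_0\|_{\cB^{-1,-1/2}_{\infty,\infty}}+\|Y_0\|_{\cB^{3,-1/2}_{2,\infty}}\bigr),
\]
the exponent $1/4$ reflecting the $\xi_3^{-1/2}$ vertical regularity in \eqref{S2eq27}; the high-frequency piece decays exponentially through the $e^{-c|\xi|^2 t}$ mode. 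Interpolating these dispersive bounds with the Besov estimates \eqref{S2eq29} at $s=3$ and bootstrapping on the decay rates absorbs the quadratic source and yields the rates claimed in \eqref{S2eq26}.

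The hardest point I foresee is the simultaneous closure of the nonlinear estimates at the top index $s=3$ in the anisotropic framework: the pressure inversion $I-\nabla\D^{-1}\dive((\cA\cA^t-I)\,\cdot\,)$ mixes horizontal and vertical frequencies, so both Besov indices in Definition~\ref{def2} must be tracked simultaneously while one preserves the smallness of $\|\nabla Y\|_{\cB^{2,0}}$ that makes the Neumann series converge. A parallel difficulty in the decay step is to guarantee that the quadratic source in Duhamel's formula decays at least like $\w{t}^{-1/2}$, which is needed so as not to spoil the target rate $\w{t}^{-1/4}$; this forces a two-tier bootstrap that propagates the Besov estimates \eqref{S2eq29} and the decay rates \eqref{S2eq26} in tandem, with the negative-regularity information from \eqref{S2eq27} playing the role of a ``time-weighted'' reservoir.
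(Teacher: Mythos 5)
Your plan for the regularity estimate \eqref{S2eq29} — a weighted energy identity with multipliers $Y_t$ and a small multiple of $\Delta Y$, anisotropic frequency splitting according to whether $|\xi|^2\lessgtr 2|\xi_3|$, paraproduct product laws for the quadratic nonlinearity, Neumann-series inversion of the pressure, then a continuity argument — matches the paper's scheme (Lemma~\ref{S3-Lem1}, Lemma~\ref{S4lem2}, Propositions~\ref{Lip} and~\ref{S6prop1}). However, the ``algebra'' space is not $\cB^{2,0}$: that space does not embed into $L^\infty$ (Bernstein gives $\|\Djl a\|_{L^\infty}\lesssim 2^j2^{\ell/2}\|\Djl a\|_{L^2}$, and $2^{j}2^{\ell/2}\gg 2^{2j}$ for $j$ very negative). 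The paper uses $\cB^{1,\f12}\hookrightarrow L^\infty$ as the algebra norm (see \eqref{GH1}), and its proof is two-tier: the Lipschitz estimate is first closed in the $\cB^{\cdot,\f12}$ scale (Proposition~\ref{Lip}) and only afterwards transferred to $\cB^{s,0}$ for $s=0,3$ (Proposition~\ref{S6prop1}), using the $\cB^{1,\f12}$ bound as the small factor. Your proposed product estimate with $\cB^{2,0}$ on both slots is therefore not available as stated.

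For the decay \eqref{S2eq26} you propose a genuinely different route: Duhamel's formula plus dispersive semigroup bounds, closed by a two-tier bootstrap on decay rates. The paper instead uses a pure Guo--Tice / Ren--Wu--Xiang--Zhang type energy argument (Propositions~\ref{prop5.1}--\ref{prop5.3}): one derives $\f{d}{dt}E_0+\f18 D_0\leq 0$, then applies the anisotropic interpolation Lemma~\ref{lem1.1} — which is precisely where the negative-index norms in \eqref{S2eq27} enter — to obtain $E_0\lesssim(\lambda_0+E_0)^{4/3}D_0^{1/3}$, hence the ODE $\f{d}{dt}E_0+c\lambda_0^{-4}E_0^3\leq 0$ and the rate $E_0(t)\lesssim\w{t}^{-1/2}$. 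The extra $\w{t}^{1/8}$ gain for the $\pa_3$-quantities comes from a time-weighted version of the same computation (Proposition~\ref{prop5.3}), not from a separate dispersive input. This energy approach sidesteps the delicate control of the quadratic Duhamel source. Moreover, the dispersive estimate you write down is not correct as stated: the slow mode $\la_-(\xi)\sim-\xi_3^2/|\xi|^2$ degenerates as $\xi_3\to0$ for $|\xi|\sim1$ and gives no decay in $|\xi|$, so a bound of the form $\||\xi|^ae^{\la_-(\xi)t}\wh{Y_0}\|_{L^2}\lesssim\w{t}^{-a/2-1/4}(\cdots)$ cannot hold for $a>0$ — the $t^{-a/2}$ gain can only come from the $\la_+\sim-|\xi|^2$ mode, and propagating that through the nonlinearity is exactly the bookkeeping the paper's interpolation lemma is designed to avoid.
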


Let us remark that with more regularities on the initial data, we
can study the decay rate of the solution in higher Sobolev norms.
For a concise presentation, we shall not pursue this direction here.

\medskip

 \setcounter{equation}{0}
\section{Estimates related to Littlewood-Paley theory}\label{sect3}

The linearized system of \eqref{1.2b} reads
 \beq\label{B19}\left\{\begin{aligned}
&Y_{tt}-\Delta Y_t-\p_3^2 Y= f,\\
&Y|_{t=0}=Y_0,\quad Y_t|_{t=0}=Y_1.
\end{aligned}\right.\eeq
As observed in \cite{LZ,XLZMHD1,XZ15},  the corresponding symbolic
equation to \eqref{B19}, \beno
\la^2+|\xi|^2\la+\xi_3^2=0\quad\mbox{for}\quad
\xi=(\xi_\h,\xi_3)\quad\mbox{and}\quad \xi_\h=(\xi_1,\xi_2), \eeno
 has two different eigenvalues \beq\label{C1} \la_\pm
=-\f{|\xi|^2\pm \sqrt{|\xi|^4-4\xi_3^2}}{2}. \eeq The Fourier modes
correspond to $\la_+$ decays like $e^{-t|\xi|^2}$.  Whereas the
decay property of the Fourier modes corresponding to $\la_-$  varies
with directions of $\xi$ as \beq\label{C2}
\la_-(\xi)=-\f{2\xi_3^2}{|\xi|^2\bigl(1+\sqrt{1-\f{4\xi_3^2}{|\xi|^4}}\bigr)}
\to -1\quad \mbox{as}\quad |\xi|\to \infty \eeq only in the $\xi_3$
direction.  Thus in order to capture this delicate decay property
for the linear equation \eqref{B19}, we shall  decompose our
frequency space into two parts: $\bigl\{ \xi=(\xi_\h,\xi_3):\
|\xi|^2\leq 2|\xi_3|\ \bigr\}$ and $\bigl\{ \xi=(\xi_\h,\xi_3):\
|\xi|^2> 2|\xi_3|\ \bigr\}$. This suggests to use anisotropic
Littlewood-Paley theory in the analysis of \eqref{1.2b}.

In  order to obtain a better description of the regularizing effect
for the transport-diffusion equation, we will use Chemin-Lerner type
spaces $\widetilde{L}^{q}_T(\dB^s_{p,r}(\R^3))$ (see \cite{bcd} for
instance).

\begin{defi}\label{def3}
Let  $(r,q,p)\in[1,+\infty]^3$ and $T\in(0,+\infty]$.
 We define the norms of $\wt{L}^q_T(\dot{B}^s_{p,r}(\R^3))$  and  $\wt{L}^q_T(\cB^{s_1,s_2}(\R^3))$ by
\beno
&&\|u\|_{\wt{L}^q_T(\dot{B}^s_{p,r})}\eqdefa\Bigl(\sum_{j\in\Z}2^{jrs}
\|\D_ju\|_{L^q_T(L^p)}^r\Bigr)^{\f{1}{r}},\quad
\|u\|_{\wt{L}^q_T(\cB^{s_1,s_2})}\eqdefa\sum_{j,\ell\in\Z^2}2^{js_1}2^{\ell
s_2} \|\D_j\D_\ell^\v u\|_{L^q_T(L^2)}, \eeno with the usual change
if $r=\infty$.
\end{defi}

The connection between the Besov space $\dB^s$ and the anisotropic
Besov space $\cB^{s_1,s_2}$ can be illustrated by the following
Lemma:

\begin{lem}[Lemma
3.2 in \cite{XLZMHD1} and \cite{XZ15}] \label{L1} {\sl Let
$s_1,s_2,\tau_1,\tau_2\in\R,$ which satisfy $s_1<\tau_1+\tau_2<s_2$
and $\tau_2>0.$  Then $a\in\cB^{\tau_1,\tau_2}(\R^3)$  (resp.
$\wt{L}^2_T(\cB^{\tau_1,\tau_2})$) if $a\in \dB^{\tau_1+\tau_2}$
(resp. $a\in \wt{L}^2_T({B}^{\tau_1+\tau_2})$)  and there holds \beq
\label{C6}
\begin{split}
&\|a\|_{\cB^{\tau_1,\tau_2}}\lesssim\|a\|_{{B}^{\tau_1+\tau_2}}
\andf
\|u\|_{\wt{L}^2_T(\cB^{\tau_1,\tau_2})}\lesssim\|u\|_{\wt{L}^2_T({B}^{\tau_1+\tau_2})}
.\end{split} \eeq}
\end{lem}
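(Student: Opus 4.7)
The plan is straightforward: I will prove the pointwise inequality $\|a\|_{\cB^{\tau_1,\tau_2}} \lesssim \|a\|_{\dB^{\tau_1+\tau_2}}$ by comparing the double anisotropic sum with the single isotropic sum via Fourier-support considerations, and the time-integrated version in $\wt L^2_T$ will follow from the identical argument with $L^2$ replaced by $L^2_T(L^2)$ throughout.

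The first step is the key Fourier-support observation. Since $\D_j$ localizes at $|\xi| \sim 2^j$ and $\D_\ell^\v$ localizes at $|\xi_3| \sim 2^\ell$, the inequality $|\xi_3| \leq |\xi|$ forces $\D_j \D_\ell^\v a \equiv 0$ whenever $\ell > j + N_0$ for some fixed integer $N_0$; this is the anisotropic analogue of the almost-orthogonality recorded in \eqref{C4}. In the complementary regime $\ell \leq j + N_0$, the $L^2$-boundedness of the Fourier multiplier $\D_\ell^\v$ furnishes the trivial estimate $\|\D_j \D_\ell^\v a\|_{L^2} \lesssim \|\D_j a\|_{L^2}$.

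Combining these two facts with the definition of $\|\cdot\|_{\cB^{\tau_1,\tau_2}}$, I would write
\begin{equation*}
\|a\|_{\cB^{\tau_1,\tau_2}} = \sum_{j,\ell\in\Z} 2^{j\tau_1} 2^{\ell\tau_2} \|\D_j \D_\ell^\v a\|_{L^2} \lesssim \sum_{j\in\Z} 2^{j\tau_1} \|\D_j a\|_{L^2} \sum_{\ell \leq j + N_0} 2^{\ell\tau_2}.
\end{equation*}
The hypothesis $\tau_2 > 0$ is then exactly what is needed to make the inner geometric sum comparable to its largest term, giving $\sum_{\ell \leq j+N_0} 2^{\ell\tau_2} \lesssim 2^{j\tau_2}$; substituting yields $\|a\|_{\cB^{\tau_1,\tau_2}} \lesssim \sum_j 2^{j(\tau_1+\tau_2)} \|\D_j a\|_{L^2} = \|a\|_{\dB^{\tau_1+\tau_2}}$.

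There is really only one delicate point to watch: the anisotropic cut-off $\ell \leq j + O(1)$ cannot be dropped, since without it the sum $\sum_\ell 2^{\ell\tau_2}$ would diverge at $+\infty$ under the assumption $\tau_2 > 0$. The auxiliary indices $s_1, s_2$ with $s_1 < \tau_1 + \tau_2 < s_2$ play no role in the quantitative estimate; they only ensure that the target regularity $\tau_1 + \tau_2$ lies in a range where Definition \ref{def1} unambiguously defines $\dB^{\tau_1+\tau_2}$. For the Chemin--Lerner version, I would simply replace $\|\D_j \D_\ell^\v a\|_{L^2}$ by $\|\D_j \D_\ell^\v u\|_{L^2_T(L^2)}$ in the computation above, using that the Fourier multipliers $\D_j, \D_\ell^\v$ act in the space variable only and hence commute with the $L^2_T$ norm.
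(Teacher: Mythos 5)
Your proof is correct. The paper itself does not prove Lemma~\ref{L1} — it is cited as Lemma~3.2 of \cite{XLZMHD1} and \cite{XZ15} — but your argument is the standard one used to prove precisely this kind of statement, and it is complete and accurate. The two key points are exactly as you identify them: (i) the support constraint $|\xi_3|\leq|\xi|$ combined with the annular supports of $\varphi(2^{-j}|\xi|)$ and $\varphi(2^{-\ell}|\xi_3|)$ forces $\Delta_j\Delta_\ell^\v a\equiv 0$ for $\ell>j+N_0$ with a universal $N_0$ (from $\tfrac34 2^\ell\leq\tfrac83 2^j$ one gets $N_0=2$); and (ii) the hypothesis $\tau_2>0$ makes the remaining geometric sum $\sum_{\ell\leq j+N_0}2^{\ell\tau_2}\sim 2^{j\tau_2}$ converge. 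Your observation that the indices $s_1,s_2$ are quantitatively irrelevant is also right — the inequality holds uniformly for all $\tau_1,\tau_2$ with $\tau_2>0$ — and the passage to the Chemin--Lerner version is immediate because $\Delta_\ell^\v$ is an $L^2$-bounded Fourier multiplier acting in the spatial variable only, so it commutes with the $L^2_T$ integration.
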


 For the convenience of the readers, we  recall the
following Bernstein type lemma from \cite{bcd, CZ, Pa02}:

\begin{lem}\label{L2} {\sl Let $\frak{B}_{\h}$ (resp.~$\frak{B}_{\v}$) be a ball
of~$\R^2$ (resp.~$\R$), and~$\cC_{\h}$ (resp.~$\cC_{\v}$) a ring
of~$\R^2$ (resp.~$\R$); let~$1\leq p_2\leq p_1\leq \infty$ and
~$1\leq q_2\leq q_1\leq \infty.$ Then there holds:
\smallbreak\noindent If the support of~$\wh a$ is included
in~$2^k\frak{B}_{\h}$, then
\[
\|\partial_\h^\alpha a\|_{L^{p_1}_\h(L^{q_1}_\v)} \lesssim
2^{k\left(|\al|+2\left(\frac1{p_2}-\frac1{p_1}\right)\right)}
\|a\|_{L^{p_2}_\h(L^{q_1}_\v)}.
\]
If the support of~$\wh a$ is included in~$2^\ell\frak{B}_{\v}$, then
\[
\|\partial_3^\beta a\|_{L^{p_1}_\h(L^{q_1}_\v)} \lesssim
2^{\ell\left(\beta+\left(\frac1{q_2}-\frac1{q_1}\right)\right)} \|
a\|_{L^{p_1}_\h(L^{q_2}_\v)}.
\]
If the support of~$\wh a$ is included in~$2^k\cC_{\h}$, then
\[
\|a\|_{L^{p_1}_\h(L^{q_1}_\v)} \lesssim 2^{-kN}
\max_{|\al|=N}\|\partial_\h^\al a\|_{L^{p_1}_\h(L^{q_1}_\v)}.
\]
If the support of~$\wh a$ is included in~$2^\ell\cC_{\v}$, then
\[
\|a\|_{L^{p_1}_\h(L^{q_1}_\v)} \lesssim 2^{-\ell N} \|\partial_3^N
a\|_{L^{p_1}_\h(L^{q_1}_\v)}.
\]}
\end{lem}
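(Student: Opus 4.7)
The plan is to derive all four estimates via convolution with smooth cutoffs scaled to the frequency annulus/ball in question, then invoke Young's inequality separately in the horizontal and vertical variables, exploiting the product structure of the mixed Lebesgue norm $L^{p_1}_\h(L^{q_1}_\v)$.

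For the first (horizontal ball) estimate, I would fix once and for all a function $\phi \in \cS(\R^2)$ whose Fourier transform equals $1$ on the unit ball of $\R^2$ and vanishes outside a slightly larger ball. Setting $\phi_k(x_\h) \eqdef 2^{2k}\phi(2^k x_\h)$, the hypothesis $\Supp \wh a \subset 2^k \frak{B}_\h$ gives $a(x_\h,x_3) = (\phi_k *_\h a(\cdot,x_3))(x_\h)$, and therefore
\[
\pa_\h^\al a(x_\h,x_3) = 2^{k|\al|}\bigl((\pa_\h^\al\phi)_k *_\h a(\cdot,x_3)\bigr)(x_\h).
\]
Taking the $L^{p_1}_\h$ norm and applying Young's convolution inequality with exponent $r$ satisfying $1+\tfrac1{p_1}=\tfrac1{p_2}+\tfrac1r$ produces the factor $\|(\pa_\h^\al\phi)_k\|_{L^r_\h} = 2^{k(2-2/r)}\|\pa_\h^\al\phi\|_{L^r}$, which combines with $2^{k|\al|}$ to give the exponent $|\al|+2(\tfrac1{p_2}-\tfrac1{p_1})$. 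Finally, taking the $L^{q_1}_\v$ norm on both sides and using Minkowski's integral inequality to commute the horizontal $L^{p_1}_\h$ norm past the vertical integral completes the proof. The second (vertical ball) estimate is strictly analogous, using a one-dimensional Schwartz cutoff $\psi \in \cS(\R)$ with $\wh\psi\equiv 1$ near the origin, and convolving in the $x_3$ variable alone.

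For the third (horizontal ring) estimate, I would fix $\widetilde\chi \in \cS(\R^2)$ with $\widetilde\chi \equiv 1$ on the ring $\cC_\h$ and supported away from the origin. Since $\widetilde\chi$ vanishes near $\xi_\h = 0$, for any integer $N\geq 1$ one can decompose
\[
\widetilde\chi(\xi_\h) = \sum_{|\al|=N}(i\xi_\h)^\al \psi_\al(\xi_\h), \qquad \psi_\al \in \cS(\R^2),
\]
by dividing by $|\xi_\h|^{2N}$ on the support. Rescaling yields $\widetilde\chi(2^{-k}\xi_\h) = 2^{-kN}\sum_{|\al|=N}(i\xi_\h)^\al \psi_\al(2^{-k}\xi_\h)$, so that
\[
a(x_\h,x_3) = 2^{-kN}\sum_{|\al|=N}\bigl((\psi_\al)_k *_\h \pa_\h^\al a(\cdot,x_3)\bigr)(x_\h),
\]
with $(\psi_\al)_k(x_\h) \eqdef 2^{2k}(\cF^{-1}\psi_\al)(2^k x_\h)$ having $L^1_\h$ norm independent of $k$. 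Young's inequality in the horizontal variable (with exponents $1$ and $p_1$) followed by taking the $L^{q_1}_\v$ norm yields the claim. The fourth (vertical ring) estimate follows by the same factorization applied in the single variable $\xi_3$.

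The only subtlety I anticipate is bookkeeping with the mixed-norm structure: one must verify that convolution in the horizontal variables alone does not interact with the $L^{q_1}_\v$ norm, which is immediate from Minkowski's inequality and Fubini, and that the scaling factors $2^{2k}$ (resp.\ $2^\ell$) from the Jacobian of dilation combine correctly with the Young exponent to yield $\tfrac1{p_2}-\tfrac1{p_1}$ (resp.\ $\tfrac1{q_2}-\tfrac1{q_1}$). Once the cutoff functions are fixed and the scaling is tracked carefully, there is no further obstacle, and indeed this argument is a direct anisotropic adaptation of the standard isotropic Bernstein inequality found in \cite{bcd}.
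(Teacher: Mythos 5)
The paper offers no proof of this lemma at all (it is recalled from \cite{bcd, CZ, Pa02}), and your argument is exactly the standard one behind it: reproduce $a$ by a horizontal (resp.\ vertical) convolution with a rescaled cutoff whose Fourier transform is $1$ on the ball, or, in the ring cases, write the cutoff as $\sum_{|\al|=N}(i\xi_\h)^\al\psi_\al(\xi_\h)$ with $\psi_\al$ Schwartz, and then invoke Young's inequality in the variable in which the convolution acts. Your scaling bookkeeping (the factor $2^{2k(1-1/r)}$ from the dilation combined with $2^{k|\al|}$, and the $k$-independent $L^1$ norm of the kernels in the ring case) is correct, and the vertical-ball and vertical-ring estimates indeed go through exactly as you say, since there the convolution lives in the inner variable $x_3$ and no interchange of norms is ever needed.

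The one step that does not work as literally written is the order of operations in the two horizontal cases. You apply Young in $x_\h$ at fixed $x_3$, then take the $L^{q_1}_\v$ norm, then propose to ``commute'' the norms by Minkowski. That produces a bound of the $L^{q_1}_\v(L^{p_1}_\h)$ norm by the $L^{q_1}_\v(L^{p_2}_\h)$ norm, and converting this into the asserted $\|\cdot\|_{L^{p_1}_\h(L^{q_1}_\v)}\lesssim\|\cdot\|_{L^{p_2}_\h(L^{q_1}_\v)}$ via Minkowski would require $p_2\le q_1\le p_1$ (Minkowski only lets you move the larger exponent outward), a relation the lemma does not assume: no ordering between the horizontal and vertical exponents is imposed. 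The repair is the mechanism you actually gesture at in your closing paragraph: apply Minkowski \emph{before} Young, i.e.\ take the inner $L^{q_1}_\v$ norm of the convolution identity pointwise in $x_\h$, which gives $\|\pa_\h^\al a(x_\h,\cdot)\|_{L^{q_1}_\v}\le 2^{k|\al|}\bigl(|(\pa_\h^\al\phi)_k|*_\h \|a(\cdot,\cdot)\|_{L^{q_1}_\v}\bigr)(x_\h)$, and only then apply Young in $x_\h$ to the scalar function $x_\h\mapsto\|a(x_\h,\cdot)\|_{L^{q_1}_\v}$. This lands directly in the correct mixed-norm order with no constraint linking $p_i$ and $q_i$; the same reordering fixes the horizontal ring estimate. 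With that adjustment the proof is complete and coincides with the argument in the cited references.
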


As applications of the above basic facts on Littlewood-Paley theory,
we present the following product laws:

\begin{lem}[Lemma 3.3 of \cite{XZ15}]\label{L3}
{\sl Let $s_1,s_2,\tau_1,\tau_2\in\R,$ which satisfy  $s_1,
s_2\leq1$, $\tau_1,\tau_2\leq\f{1}{2}$ and $s_1+s_2>0$,
$\tau_1+\tau_2>0$. Then for $a\in\cB^{s_1,\tau_1}(\R^3)$ and
$b\in\cB^{s_2,\tau_2}(\R^3)$,
$ab\in\cB^{s_1+s_2-1,\tau_1+\tau_2-\f{1}{2}}(\R^3)$ and there holds
\beq\label{C9}
\|ab\|_{\cB^{s_1+s_2-1,\tau_1+\tau_2-\f{1}{2}}}\lesssim\|a\|_{\cB^{s_1,\tau_1}}\|b\|_{\cB^{s_2,\tau_2}}.
\eeq}
\end{lem}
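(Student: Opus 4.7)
My plan is to use the classical anisotropic Bony decomposition: paraproduct/remainder splitting applied in both the horizontal variable $x_\h$ and the vertical variable $x_3$. First I would write
$$ab = T^\h_a b + T^\h_b a + R^\h(a,b), \qquad T^\h_a b \eqdef \sum_k S^\h_{k-1}a\cdot \D^\h_k b, \quad R^\h(a,b) \eqdef \sum_k \D^\h_k a \cdot \widetilde{\D^\h_k} b,$$
and then decompose each of the three bilinear pieces a second time using the analogous vertical operators $T^\v, R^\v$. This produces nine bilinear pieces, and the lemma will reduce to estimating the $L^2$ norm of $\D_j\D_\ell^\v$ applied to each one and summing against the weight $2^{j(s_1+s_2-1)}2^{\ell(\tau_1+\tau_2-\f12)}$.

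For a pure paraproduct piece of the form $\sum_{k,k'} S^\h_{k-1}S^\v_{k'-1}a\cdot \D^\h_k\D^\v_{k'}b$, spectral support constraints will force $k\sim j$ and $k'\sim \ell$ once $\D_j\D_\ell^\v$ is applied; the low-frequency factor is then controlled in $L^\infty$ via the anisotropic Bernstein inequality of Lemma \ref{L2}. Horizontal Bernstein gives $L^\infty_\h \lesssim 2^{k}\, L^2_\h$, the loss $1$ reflecting the horizontal dimension $2$, while vertical Bernstein gives $L^\infty_\v \lesssim 2^{k'/2}\,L^2_\v$, the loss $\f12$ reflecting the vertical dimension $1$. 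These two losses are exactly the shifts $-1$ and $-\f12$ in the target index in \eqref{C9}. The assumptions $s_1\le 1$ and $\tau_1\le \f12$ (resp.\ the symmetric conditions on $s_2,\tau_2$ in the piece where the roles of $a$ and $b$ are swapped) then ensure that the low-frequency summations hidden in $S^\h_{k-1}$ and $S^\v_{k'-1}$ converge when tested against the $\cB^{s_i,\tau_i}$-norm.

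For the remainder-type pieces, support considerations force summation over $k\ge j-C$ (for $R^\h$) or $k'\ge \ell-C$ (for $R^\v$); convergence at the high-frequency end is secured precisely by the hypotheses $s_1+s_2>0$ and $\tau_1+\tau_2>0$. The mixed pieces $T^\h R^\v$ and $R^\h T^\v$ will combine both strategies. I expect the main obstacle to be the careful bookkeeping of the nine pieces together with the anisotropic Bernstein losses, so that in every piece one extracts the factor $2^{-j(s_1+s_2-1)}2^{-\ell(\tau_1+\tau_2-\f12)}$ times a double-indexed $\ell^1(\Z^2)$ sequence $(d_{j,\ell})$; summation in $(j,\ell)$ will then yield \eqref{C9}.
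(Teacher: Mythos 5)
The key issue is that your decomposition is not adapted to the norm structure of~$\cB^{s_1,s_2}$. Looking at Definition~\ref{def2}, the norm is built from~$\D_j\D^\v_\ell$ where~$\D_j$ is the \emph{full isotropic} Littlewood--Paley block from~\eqref{1.0}, not the horizontal block~$\D^\h_j$. You propose to split~$ab$ via the horizontal Bony decomposition~$T^\h+\bar T^\h+R^\h$ composed with the vertical one, and you assert that ``spectral support constraints will force~$k\sim j$ and~$k'\sim \ell$ once~$\D_j\D^\v_\ell$ is applied.'' That assertion fails for the horizontal index: a piece~$S^\h_{k-1}S^\v_{k'-1}a\cdot\D^\h_k\D^\v_{k'}b$ has horizontal frequency~$\sim 2^k$ and vertical frequency~$\sim 2^{k'}$, so its \emph{full} frequency is~$\sim\max(2^k,2^{k'})$. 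Applying~$\D^\v_\ell$ forces~$k'\sim\ell$, but applying~$\D_j$ forces only~$\max(k,\ell)\sim j$. When~$j\le \ell+O(1)$ this allows the entire range~$k\lesssim j$, not just~$k\sim j$, and your bookkeeping (extract one factor~$2^{-j(s_1+s_2-1)}2^{-\ell(\tau_1+\tau_2-1/2)}d_{j,\ell}$ per fixed~$(k,k')\sim(j,\ell)$) breaks down; one must then relate~$\D^\h_k\D^\v_\ell b$ back to the~$\D_{j'}\D^\v_\ell b$ appearing in the norm, which introduces a further index and makes the convergence analysis considerably more delicate than you indicate.

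The approach compatible with this norm, and the one this paper actually carries out for the structurally identical Lemma~\ref{PRO}, is to take Bony's decomposition in the \emph{whole-space} variables and in the vertical variable simultaneously:
\[
ab=\bigl(T+\bar T+R\bigr)\bigl(T^\v+\bar T^\v+R^\v\bigr)(a,b),
\]
with~$T,\bar T,R$ built from the isotropic blocks~$S_{j'-1},\D_{j'}$ exactly as in~\eqref{C7}. Then applying~$\D_j$ genuinely localizes~$j'\sim j$ (and~$\D^\v_\ell$ localizes~$\ell'\sim\ell$), and each of the nine pieces is handled with a single~$(j',\ell')$ near~$(j,\ell)$. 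The horizontal Bernstein loss you wanted is still available: a factor like~$S_{j'-1}\D^\v_{\ell'}a$ has full frequency~$\lesssim 2^{j'}$, hence horizontal frequency~$\lesssim 2^{j'}$, and Lemma~\ref{L2} gives~$\|S_{j'-1}\D^\v_{\ell'}a\|_{L^\infty_\h(L^2_\v)}\lesssim 2^{j'}\|S_{j'-1}\D^\v_{\ell'}a\|_{L^2}$; the vertical Bernstein loss~$2^{\ell/2}$ comes in the same way. Your remarks about the roles of~$s_1,s_2\le1$,~$\tau_1,\tau_2\le\tfrac12$ (controlling the low-frequency sums inside~$S_{j'-1}$,~$S^\v_{\ell'-1}$) and~$s_1+s_2>0$,~$\tau_1+\tau_2>0$ (controlling the remainder tails) are correct in spirit, but they should be implemented with this full-plus-vertical splitting rather than the horizontal-plus-vertical one.
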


\begin{rmk} Exactly along the same line to the proof of \eqref{C9}, we can show the
following law of product that for any $s>-1$ \beq \label{GH1}
\begin{split}
&\|a\na b\|_{\cB^{s,0}}\lesssim
\|a\|_{\cB^{1,\f12}}\|b\|_{\cB^{s+1,0}}+\|b\|_{\cB^{1,\f12}}\|a\|_{\cB^{s+1,0}},\\
&\|a b\|_{\cB^{s,0}}\lesssim
\|a\|_{\cB^{1,\f12}}\|b\|_{\cB^{s,0}}+\|b\|_{\cB^{1,\f12}}\|a\|_{\cB^{s,0}}.
\end{split}
 \eeq
We skip the details here.
\end{rmk}

\begin{lem}\label{PRO}
{\sl Let $s>-1$ and $\delta\in[0,1],$ then one has \ben &&
\|ab\|_{\cB^{s,-\f12}_{1,\infty}} \lesssim
\|a\|_{\cB^{1,0}}\|b\|_{\cB^{s,0}}
+\|a\|_{\cB^{s+\delta,0}}\|b\|_{\cB^{1-\delta,0}},\label{PROeq1}\\
&& \|ab\|_{\cB^{s',-\f12}_{\infty,\infty}} \lesssim
\|a\|_{\cB^{1,\f12}}\|b\|_{\cB^{s',-\f12}_{\infty,\infty}}\quad\mbox{for}\
\  \forall\ s'\in [-1,1].\label{PROeq2} \een}
\end{lem}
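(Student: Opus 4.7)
The plan is to imitate the proof of Lemma \ref{L3} (Lemma 3.3 of \cite{XZ15}) by performing Bony's paraproduct decomposition in both the vertical and horizontal/isotropic directions and then invoking the anisotropic Bernstein inequalities of Lemma \ref{L2}. The key observation is that the target norms $\cB^{\cdot,-\frac12}_{r,\infty}$ carry an $\ell^\infty$ in the vertical dyadic index $\ell$, so we do not need to sum in $\ell$ --- it suffices to extract a factor of $2^{\ell/2}$ from $\|\Delta_j\Delta^\v_\ell(ab)\|_{L^2}$, either via a Bernstein gain on a low-frequency factor (for \eqref{PROeq1}), or by absorbing this gain into the $\cB^{1,\frac12}$ norm of a low-frequency factor (for \eqref{PROeq2}).

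For \eqref{PROeq1}, I would first apply the vertical Bony decomposition $ab = T^\v_a b + T^\v_b a + R^\v(a,b)$, with $T^\v_a b=\sum_{\ell'} S^\v_{\ell'-1} a\,\Delta^\v_{\ell'} b$ and $R^\v(a,b)=\sum_{\ell'}\Delta^\v_{\ell'} a\,\widetilde{\Delta}^\v_{\ell'} b$, and then further decompose each summand isotropically in the style of Bony. For the paraproducts $T^\v_a b$, $T^\v_b a$ the vertical spectrum is localized in a dyadic annulus of size $2^{\ell'}$, so $\Delta^\v_\ell$ only picks contributions with $|\ell'-\ell|\le N$; the heart of the estimate is then the anisotropic Bernstein bound
$$
\|S^\v_{\ell-1} S_{j-1} a\|_{L^\infty}\lesssim 2^{\ell/2}\|a\|_{\cB^{1,0}},
$$
in which the horizontal $L^2\hookrightarrow L^\infty$ embedding in dimension two contributes $2^k$ per dyadic block (summing to $\|a\|_{\cB^{1,0}}$) and the vertical $L^2_\v\hookrightarrow L^\infty_\v$ contributes $2^{\ell/2}$. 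Multiplied against $\|\Delta_j\Delta^\v_\ell b\|_{L^2}\lesssim d_{j,\ell}2^{-js}\|b\|_{\cB^{s,0}}$, the $2^{\ell/2}$ gain cancels against the weight $2^{-\ell/2}$ of the target space and produces the first contribution $\|a\|_{\cB^{1,0}}\|b\|_{\cB^{s,0}}$. For the vertical remainder $R^\v(a,b)$ the vertical spectrum is no longer sharply localized, so
$$
\Delta_j\Delta^\v_\ell R^\v(a,b)=\sum_{\ell'\ge\ell-N}\Delta_j\Delta^\v_\ell\bigl(\Delta^\v_{\ell'}a\,\widetilde{\Delta}^\v_{\ell'}b\bigr);
$$
I would split the total horizontal order $s+1$ between the two factors using the free parameter $\delta\in[0,1]$, estimate each summand by a H\"older/Bernstein bound (putting a horizontal Bernstein gain $2^k$ on the lower-frequency factor), and close the geometric sum on $\ell'$ using the $2^{-\ell/2}$ weight against the vertical Bernstein gains $2^{\ell'/2}$ on each factor, producing the second contribution $\|a\|_{\cB^{s+\delta,0}}\|b\|_{\cB^{1-\delta,0}}$.

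Estimate \eqref{PROeq2} is cleaner because the target $\cB^{s',-\frac12}_{\infty,\infty}$ is $\ell^\infty$ in both indices, so no dyadic summation on $j$ or $\ell$ is needed. Running the same double Bony decomposition, I would let $a$ play the low-frequency role in both directions and use the stronger bound $\|S^\v_{\ell'-1}S_{j-1}a\|_{L^\infty}\lesssim\|a\|_{\cB^{1,\frac12}}$, in which the gain $2^{\ell/2}$ is now packaged into the $\cB^{1,\frac12}$ norm; the factor $b$ then enters only through the single frequency block $\Delta_j\Delta^\v_\ell b$, carrying the $\cB^{s',-\frac12}_{\infty,\infty}$ norm, while the restriction $s'\in[-1,1]$ ensures that the horizontal paraproducts $T_b a$ and remainder $R^\h(a,b)$ still converge without a sum in the vertical variable. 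The main technical obstacle will be the vertical remainder $R^\v(a,b)$ in \eqref{PROeq1}: the absence of vertical frequency localization forces the sum over $\ell'\ge\ell-N$, and one must carefully balance the vertical Bernstein gains of the two factors against the $2^{-\ell/2}$ weight of the target space, relying critically on both the free parameter $\delta$ and the assumption $s>-1$ to make the resulting double geometric series summable.
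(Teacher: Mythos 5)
Your plan is essentially the same as the paper's: a simultaneous Bony decomposition in the isotropic and vertical variables producing nine pieces, with the factor $2^{\ell/2}$ harvested via anisotropic Bernstein and absorbed into the $\ell^\infty$-in-$\ell$ structure of the target norm, and the free parameter $\delta$ used to split horizontal regularity between the two factors. There are, however, two points where your accounting differs from (and is less clean than) the paper's, and one of them is a genuine wobble.

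First, for the low-vertical-frequency pieces you invoke the pointwise bound $\|S^\v_{\ell-1}S_{j-1}a\|_{L^\infty}\lesssim 2^{\ell/2}\|a\|_{\cB^{1,0}}$, which is correct, but it only applies to the block where $a$ is low in \emph{both} the isotropic and vertical directions; the companion blocks (say $\bar T T^\v$, where $a$ is low vertically but high isotropically) require a different H\"older pairing. The paper side-steps this bookkeeping by always putting the low-isotropic-frequency factor in $L^\infty_\h(L^2_\v)$ and the high one in $L^2$, then gaining the full $2^{\ell/2}$ from a single mixed Bernstein $L^2_\h(L^1_\v)\to L^2_\h(L^2_\v)$ applied to the localized product $\D_j\D_\ell^\v(\cdot)$; this treats all three $T\cdot$ pieces uniformly.

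Second, and more seriously, your description of the vertical remainder $R^\v$ — ``close the geometric sum on $\ell'$ using the $2^{-\ell/2}$ weight against the vertical Bernstein gains $2^{\ell'/2}$ on each factor'' — has the wrong arithmetic. A vertical Bernstein gain of $2^{\ell'/2}$ on \emph{each} factor would cost you $2^{\ell'}$, and since $\ell'\geq\ell-N_0$ in the remainder this is not offset by the single $2^{-\ell/2}$ weight of the target space, nor by any weight in $\cB^{\cdot,0}$ (which carries no vertical regularity). The correct mechanism, which the paper uses, is to gain $2^{\ell/2}$ \emph{once}, from the Bernstein inequality applied to $\D_j\D_\ell^\v$ of the product (passing from $L^2_\h(L^1_\v)$ to $L^2_\h(L^2_\v)$, or in the full remainder from $L^1$ to $L^2$ with gain $2^j 2^{\ell/2}$), and then simply apply Cauchy--Schwarz to the factors in $L^2$; summability in $\ell'$ comes from the $\ell^1$-in-$\ell'$ structure of the $\cB^{\cdot,0}$ norms, and summability in $j'$ from $s>-1$. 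Relatedly, the paper deploys $\delta$ in the isotropic $\bar T$-pieces (to balance $\|S_{j'-1}\D^\v_{\ell'}b\|_{L^\infty_\h(L^2_\v)}\lesssim 2^{j'\delta}\|b\|_{\cB^{1-\delta,0}}$ against $\|\D_{j'}\D^\v_{\ell'}a\|_{L^2}\lesssim 2^{-j'(s+\delta)}\|a\|_{\cB^{s+\delta,0}}$), not in $R^\v$; in the remainder no such split is needed. Once you repair the Bernstein accounting in $R^\v$ along these lines, the argument goes through and coincides with the paper's.
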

\begin{proof} Let us first recall the isentropic para-differential decomposition
of Bony from \cite{Bo}: let $a, b\in \cS'(\R^3),$  \beq
\label{C7}\begin{split} &
ab=T(a,b)+\bar{T}(a,b)+ R(a,b), \quad\hbox{where}\\
& T(a,b)\eqdefa\sum_{j\in\Z}S_{j-1}a\Delta_jb, \quad
\bar{T}(a,b)\eqdefa T(b,a), \andf\\
&R(a,b)\eqdefa\sum_{j\in\Z}\Delta_ja\tilde{\Delta}_{j}b,\quad\hbox{with}\quad
\tilde{\Delta}_{j}b\eqdefa\sum_{\ell=j-1}^{j+1}\D_\ell b.
\end{split} \eeq
 By using Bony's
decomposition \eqref{C7} for the whole space variables and the
vertical variable simultaneously, we obtain \beq \label{PROeq4}
\begin{split}
ab=&\bigl(T+\bar{T}+R\bigr)\bigl(T^\v+\bar{T}^\v+R^\v\bigr)(a,b)\\
=&\bigl(TT^\v+T\bar{T}^\v+TR^\v+\bar{T}T^\v+\bar{T}\bar{T}^\v+\bar{T}R^\v+RT^\v+R\bar{T}^\v+RR^\v\bigr)(a,b).
\end{split}
\eeq In what follows, we shall deal with the typical terms above. We
first deduce from Lemma \ref{L2} that \beno
\begin{split}
\|\D_j\D_\ell^\v(TR^\v(a,b))\|_{L^2}\lesssim
&2^{\frac{\ell}2}\sum_{\substack{|j'-j|\leq
4\\\ell'\geq\ell-N_0}}\|S_{j'-1}\D_{\ell'}^\v
a\|_{L^\infty_\h(L^2_\v)}\|\D_{j'}\D_{\ell'}^\v b\|_{L^2}\\
\lesssim &2^{\frac{\ell}2}\sum_{\substack{|j'-j|\leq
4\\\ell'\geq\ell-N_0}}d_{j',\ell'}2^{-j'
s}\|a\|_{\cB^{1,0}}\|b\|_{\cB^{s,0}}\\
\lesssim
&d_j2^{-js}2^{\frac{\ell}2}\|a\|_{\cB^{1,0}}\|b\|_{\cB^{s,0}}
\end{split}
\eeno The same estimate holds for $TT^\v(a,b)$ and
$T\bar{T}^\v(a,b).$

Similarly, we get, by applying  Lemma \ref{L2}, that \beno
\begin{split}
\|\D_j\D_\ell^\v(\bar{T}R^\v(a,b))\|_{L^2}\lesssim
&2^{\frac{\ell}2}\sum_{\substack{|j'-j|\leq
4\\\ell'\geq\ell-N_0}}\|\D_{j'}\D_{\ell'}^\v
a\|_{L^2}\|S_{j'-1}\D_{\ell'}^\v b\|_{L^\infty_{\h}(L^2_\v)}\\
\lesssim
&d_j2^{-js}2^{\frac{\ell}2}\|a\|_{\cB^{s+\d,0}}\|b\|_{\cB^{1-\d,0}}
\end{split}
\eeno The same estimate holds for $\bar{T}T^\v(a,b)$ and
$\bar{T}\bar{T}^\v(a,b).$

Finally due to $s>-1,$ we have \beno
\begin{split}
\|\D_j\D_\ell^\v(R\bar{R}^\v(a,b))\|_{L^2}\lesssim
&2^j2^{\frac{\ell}2}\sum_{\substack{j'\geq j-N_0
\\\ell'\geq\ell-N_0}}\|\D_{j'}\D_{\ell'}^\v
a\|_{L^2}\|\wt{\D}_{j'}\wt{\D}_{\ell'}^\v b\|_{L^2}\\
\lesssim &2^j2^{\frac{\ell}2}\sum_{\substack{j'\geq j-N_0
\\\ell'\geq\ell-N_0}}d_{j',\ell'}2^{-j'(
s+1)}\|a\|_{\cB^{1,0}}\|b\|_{\cB^{s,0}}\\
\lesssim
&d_j2^{-js}2^{\frac{\ell}2}\|a\|_{\cB^{1,0}}\|b\|_{\cB^{s,0}}.
\end{split}
\eeno The same estimate holds for $RT^\v(a,b)$ and
$R\bar{T}^\v(a,b).$

Hence in view of \eqref{PROeq4}, we achieve \eqref{PROeq1}. Exactly
along the same line, we can prove \eqref{PROeq2}, the detail of
which is omitted.
\end{proof}

In order to prove the large time decay estimates of the solutions to
\eqref{1.2c}, we need the following interpolation inequalities:

\begin{lem}\label{lem1.1}
{\sl Let $k\in \N$ and $f\in \cS(\R^3).$  Then one has \beno
\begin{split}
&(1)\ \quad \|f\|_{L^2} \lesssim
\|f\|_{\cB^{0,-\f12}_{2,\infty}}^{\f23}\|\p_3f\|_{L^2}^{\f13},
\\&(2)\ \quad
\|\pa_3f\|_{L^2} \lesssim
\|f\|_{\cB^{1,-\f12}_{\infty,\infty}}^{\f23}\|\na\p_3f\|_{L^2}^{\f13},
\\&(3)\ \quad
\|\na^kf\|_{L^2} \lesssim
\|f\|_{\cB^{\f{3k}2,-\f12}_{2,\infty}}^{\f23}\|\p_3f\|_{L^2}^{\f13}
\andf\\&(4)\ \quad  \|\na^kf\|_{L^2} \lesssim
\|f\|_{\cB^{\f{3k-1}2,-\f12}_{2,\infty}}^{\f23}\|\na\p_3f\|_{L^2}^{\f13}.
\end{split}
\eeno}
\end{lem}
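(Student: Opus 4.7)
\textbf{Proof plan for Lemma \ref{lem1.1}.} All four inequalities share a common mechanism: decompose $f$ via the double Littlewood--Paley blocks $\Delta_j\Delta_\ell^\v$, split the vertical sum at a threshold $M$ (or $N$), bound the low-$\ell$ part by the anisotropic Besov norm and the high-$\ell$ part by the vertical Bernstein inequality applied to a $\p_3$-derivative, and then optimize $M$. The common thread is the exponent arithmetic $2^{3M}\sim (\text{derivative norm})^2/(\text{Besov norm})^2$, which produces the $2/3$--$1/3$ interpolation exponents.

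I begin with (1). By almost orthogonality, $\|f\|_{L^2}^2\sim\sum_{j,\ell}\|\Delta_j\Delta_\ell^\v f\|_{L^2}^2$. For each fixed $j$, set $a_j\eqdef \sup_\ell 2^{-\ell/2}\|\Delta_j\Delta_\ell^\v f\|_{L^2}$ and $b_j^2\eqdef \sum_\ell\|\p_3\Delta_j\Delta_\ell^\v f\|_{L^2}^2\sim \|\p_3\Delta_j f\|_{L^2}^2$. The low-$\ell$ bound $\|\Delta_j\Delta_\ell^\v f\|_{L^2}\leq 2^{\ell/2}a_j$ yields $\sum_{\ell\leq M}\lesssim 2^M a_j^2$, while Lemma \ref{L2} applied vertically gives $\|\Delta_j\Delta_\ell^\v f\|_{L^2}\lesssim 2^{-\ell}\|\p_3\Delta_j\Delta_\ell^\v f\|_{L^2}$, whence $\sum_{\ell>M}\lesssim 2^{-2M}b_j^2$. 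Choosing $2^{3M}=b_j^2/a_j^2$ produces $\sum_\ell\|\Delta_j\Delta_\ell^\v f\|_{L^2}^2\lesssim a_j^{4/3}b_j^{2/3}$. Summing in $j$ and applying H\"older with conjugate exponents $(3/2,3)$ gives $\|f\|_{L^2}^2\lesssim\bigl(\sum_j a_j^2\bigr)^{2/3}\bigl(\sum_j b_j^2\bigr)^{1/3}=\|f\|_{\cB^{0,-\f12}_{2,\infty}}^{4/3}\|\p_3 f\|_{L^2}^{2/3}$.

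For (3), I repeat the same per-$j$ analysis but with the weight $2^{2jk}$ factored out, yielding $\|\na^k f\|_{L^2}^2\lesssim \sum_j 2^{2jk}a_j^{4/3}b_j^{2/3}$. The key book-keeping is that H\"older with $(3/2,3)$ absorbs the horizontal weight into the Besov index, since $\tfrac{2}{3}\cdot 3jk=jk$ and combining with the $2^{3jk/2}$ from the definition of $\|f\|_{\cB^{3k/2,-1/2}_{2,\infty}}$ matches exactly: $\sum_j 2^{2jk}a_j^{4/3}b_j^{2/3}\leq\bigl(\sum_j 2^{3jk}a_j^2\bigr)^{2/3}\bigl(\sum_j b_j^2\bigr)^{1/3}$. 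Estimate (4) is identical in structure, except that the high-$\ell$ step uses $\|\Delta_j\Delta_\ell^\v f\|_{L^2}^2\lesssim 2^{-2\ell}2^{-2j}\|\na\p_3\Delta_j\Delta_\ell^\v f\|_{L^2}^2$; the extra factor $2^{-2j/3}$ that emerges from the per-$j$ optimization absorbs exactly one horizontal derivative, shifting the Besov index from $3k/2$ to $(3k-1)/2$.

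The only estimate that requires a slightly different layout is (2), because the target norm is $L^\infty$ in $j$. Here I use $\|\p_3 f\|_{L^2}^2\sim\sum_{j,\ell}2^{2\ell}\|\Delta_j\Delta_\ell^\v f\|_{L^2}^2$, together with the support constraint $\ell\leq j+O(1)$ coming from $\Delta_j\Delta_\ell^\v$ (since $|\xi_3|\leq|\xi|$). Setting $A=\|f\|_{\cB^{1,-1/2}_{\infty,\infty}}$ and $B=\|\na\p_3 f\|_{L^2}$, I split globally at a single threshold $N$. The low-$\ell$ estimate uses $\|\Delta_j\Delta_\ell^\v f\|_{L^2}\leq A\cdot 2^{-j+\ell/2}$ and the constraint $\ell\leq j+O(1)$, summing the resulting geometric series in $\ell$ then $j$ to produce $A^2 2^N$. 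The high-$\ell$ estimate uses $2^{2j}2^{2\ell}\|\Delta_j\Delta_\ell^\v f\|_{L^2}^2\leq B^2$ summed, together with $j\geq \ell-O(1)>N-O(1)$, producing $2^{-2N}B^2$. Optimizing $2^{3N}=B^2/A^2$ yields $\|\p_3 f\|_{L^2}^2\lesssim A^{4/3}B^{2/3}$.

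The one point that needs genuine care, and which I expect to be the main technical obstacle, is the correct interaction between the support condition $\ell\lesssim j$ and the two different splittings (per-$j$ in (1),(3),(4), global in (2)); getting the exponent $(3k-1)/2$ in (4) right depends on extracting exactly the factor $2^{-2j/3}$ from the per-$j$ optimization before invoking H\"older. Once this bookkeeping is in place, the four inequalities follow uniformly from the same three-line dyadic interpolation.
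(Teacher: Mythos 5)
Your proof is correct, and it rests on the same engine as the paper's: the anisotropic decomposition $\Delta_j\Delta_\ell^\v$, vertical Bernstein from Lemma \ref{L2}, and optimization of a dyadic cut in the vertical index. Three local differences are worth recording. For (1) and (3) the paper places its cut on $\ell-kj\leq N$ (a single global shift), so one optimization in $N$ suffices and no H\"older in $j$ is invoked; your per-$j$ optimization at $2^{3M_j}=b_j^2/a_j^2$ followed by H\"older with exponents $(3/2,3)$ is an equivalent rearrangement and yields the identical answer. For (2) the paper does not carry out a dyadic split at all: it cites the isotropic interpolation $\|\p_3 f\|_{L^2}\lesssim\|\p_3 f\|_{\dB^{-\f12}_{2,\infty}}^{\f23}\|\na\p_3 f\|_{L^2}^{\f13}$ (Proposition 2.22 of \cite{bcd}) and then verifies $\|\p_3 f\|_{\dB^{-\f12}_{2,\infty}}\lesssim\|f\|_{\cB^{1,-\f12}_{\infty,\infty}}$ using the very support constraint $\ell\leq j+N_0$ that you exploit; your direct dyadic derivation is a bit more self-contained and arrives at the same place. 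For (4) the paper avoids re-running the threshold argument by applying (3) with $f$ replaced by $\na f$ and $k$ by $k-1$, using $\|\na f\|_{\cB^{3(k-1)/2,-\f12}_{2,\infty}}\sim\|f\|_{\cB^{(3k-1)/2,-\f12}_{2,\infty}}$; your direct route extracting the $2^{-2j/3}$ gain is sound, though it implicitly uses the reverse estimate $\|\Delta_j\Delta_\ell^\v f\|_{L^2}\lesssim 2^{-j}2^{-\ell}\|\na\p_3\Delta_j\Delta_\ell^\v f\|_{L^2}$, i.e.\ vertical Bernstein combined with the inverse Bernstein inequality on the isotropic ring $|\xi|\sim 2^j$, which deserves an explicit mention. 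All four exponents come out exactly as claimed.
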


\begin{proof} Note that by  virtue of Definition \ref{def2}, for any fixed integer $N,$ one has
\beq\label{lem1.1eqpo}
\begin{split}
\|f\|_{L^2}^2\sim &\sum_{(j,\ell)\in \Z^2}\|\D_j\D_\ell^\v
f\|_{L^2}^2=\sum_{\substack{\ell\leq N\\j\in \Z}}\|\D_j\D_\ell^\v
f\|_{L^2}^2+\sum_{\substack{\ell> N\\j\in \Z}}\|\D_j\D_\ell^\v
f\|_{L^2}^2\\
\lesssim &\sum_{\substack{\ell\leq N\\j\in
\Z}}c_j^22^{\ell}\|f\|_{\cB^{0,-\f12}_{2,\infty}}^2+\sum_{\substack{\ell>
N\\j\in \Z}}c_j^22^{-2\ell}\|\p_3f\|_{L^2}^2\\
\lesssim &2^N\|f\|_{\cB^{0,-\f12}_{2,\infty}}^2+
2^{-2N}\|\p_3f\|_{L^2}^2.
\end{split}
\eeq Here and in all that follows, we always denote $(c_j)_{j\in\Z}$
to be a generic element of $\ell^2(\Z)$ so that
$\sum_{j\in\Z}c_j^2=1.$

 Let us now  choose the integer $N$ in \eqref{lem1.1eqpo} so that \beno
2^N\sim
\biggl(\f{\|\p_3f\|_{L^2}^2}{\|f\|_{\cB^{0,-\f12}_{2,\infty}}^2}\biggr)^{\f13},
\eeno leads to (1) of Lemma \ref{lem1.1}.

To prove (2) of Lemma \ref{lem1.1}, we first deduce from Proposition
2.22 of \cite{bcd} that \beq \label{lem1.1eqa} \|\pa_3f\|_{L^2}
\lesssim \|\pa_3f\|_{
\dB^{-\f12}_{2,\infty}}^{\f23}\|\na\pa_3f\|_{L^2}^{\f13}. \eeq Yet
by virtue of Definition \ref{def1}, we have
$$
\begin{aligned}
\|\pa_3f\|_{
\dB^{-\f12}_{2,\infty}}&=\sup_{j}2^{-\f{j}{2}}\|\D_j\pa_3f\|_{L^2}
\lesssim \sup_{j}2^{-\f{j}{2}}\sum_{\ell\le
j+N_0}\|\D_j\D^\v_{\ell}\pa_3f\|_{L^2}
\\&
\lesssim \sup_{j}2^{-\f{j}{2}}\sum_{\ell\le
j+N_0}2^{\f{3\ell}{2}}2^{-\f{\ell}{2}} \|\D_j\D^\v_{\ell}f\|_{L^2}
\\&
\lesssim \sup_{j}2^{j}\sup_{\ell}2^{-\f{\ell}{2}}
\|\D_j\D^\v_{\ell}f\|_{L^2}=\|f\|_{\cB^{1,-\f12}_{\infty,\infty}}.
\end{aligned}
$$ Resuming the above estimate into \eqref{lem1.1eqa}
gives rise to the second inequality of Lemma \ref{lem1.1}.

Along the same line to \eqref{lem1.1eqpo}, for any integer $k,$ we
write \beno
\begin{split}
\|\na^k f\|_{L^2}^2\sim &\sum_{\substack{\ell-kj\leq N\\j\in
\Z}}2^{2kj}\|\D_j\D_\ell^\v f\|_{L^2}^2+\sum_{\substack{\ell-kj>
N\\j\in \Z}}2^{2kj}\|\D_j\D_\ell^\v
f\|_{L^2}^2\\
\lesssim &\sum_{\substack{\ell-kj\leq N\\j\in
\Z}}c_j^22^{(\ell-kj)}\|f\|_{\cB^{\f{3k}2,-\f12}_{2,\infty}}^2+\sum_{\substack{\ell-kj>
N\\j\in \Z}}c_j^22^{-2(\ell-kj)}\|\p_3f\|_{L^2}^2\\
\lesssim &2^N\|f\|_{\cB^{\f{3k}2,-\f12}_{2,\infty}}^2+
2^{-2N}\|\p_3f\|_{L^2}^2.
\end{split}
\eeno Taking  $N$ in the above inequality so that \beno 2^N\sim
\biggl(\f{\|\p_3f\|_{L^2}^2}{\|f\|_{\cB^{\f{3k}2,-\f12}_{2,\infty}}^2}\biggr)^{\f13},\eeno
leads to (3) of  Lemma \ref{lem1.1}.

Finally a direct application of (3) with $f$ (resp. $k$) there being
replaced by $\na f$ (resp. $k-1$) leads to the last inequality of
Lemma \ref{lem1.1}. This completes the proof of the lemma.
\end{proof}

\begin{lem}\label{S3lem1}
{\sl Let $s\in \R$ and $b\in\cS(\R^3),$   one has \beq\label{S3eq1}
\|b\|_{\dot W^{s,4}}\lesssim
\|b\|_{\cB^{1+2s,-\f12}_{2,\infty}}^{\f12}\|\p_3b\|_{L^2}^{\f12}.\eeq}
\end{lem}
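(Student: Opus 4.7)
The strategy is to combine the Littlewood--Paley characterization of $\dot W^{s,4}$ with anisotropic Bernstein inequalities, and then interpolate between the two norms appearing on the right-hand side. First I would use the vector-valued Littlewood--Paley theorem in $L^4$ (with $|D|^s$ treated as a Fourier multiplier of order $s$), followed by Minkowski in the resulting $\ell^2$-sum, to obtain
$$\|b\|_{\dot W^{s,4}}^2 \lesssim \sum_{j\in\Z} 2^{2js}\|\Delta_j b\|_{L^4}^2.$$
Applying the same theorem fiberwise in the vertical variable, and using that the spectral localization $|\xi|\sim 2^j$ forces $|\xi_3|\lesssim 2^j$, this reduces further to
$$\|\Delta_j b\|_{L^4}^2 \lesssim \sum_{\ell\leq j+N_0}\|\Delta_j\Delta_\ell^\v b\|_{L^4}^2.$$

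Next, I would apply Lemma \ref{L2} to each mixed dyadic block. The $2$-D horizontal $L^2\hookrightarrow L^4$ Bernstein inequality contributes a factor $2^{j/2}$, while the $1$-D vertical one contributes $2^{\ell/4}$, giving
$$\|\Delta_j\Delta_\ell^\v b\|_{L^4}^2 \lesssim 2^{j+\ell/2}\|\Delta_j\Delta_\ell^\v b\|_{L^2}^2.$$
Combining the three displayed inequalities yields the master estimate
$$\|b\|_{\dot W^{s,4}}^2 \lesssim \sum_{j\in\Z}\sum_{\ell\leq j+N_0} 2^{(1+2s)j+\ell/2}\|\Delta_j\Delta_\ell^\v b\|_{L^2}^2.$$

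Then I would use two complementary bounds on the blocks. From the very definition of $\cB^{1+2s,-1/2}_{2,\infty}$, one has $\|\Delta_j\Delta_\ell^\v b\|_{L^2}\leq 2^{\ell/2}\,2^{-(1+2s)j}\,c_j\,M$ with $M\eqdef\|b\|_{\cB^{1+2s,-1/2}_{2,\infty}}$ and $(c_j)\in\ell^2(\Z)$ of unit norm; and from $E\eqdef\|\p_3 b\|_{L^2}$, $\|\Delta_j\Delta_\ell^\v b\|_{L^2}\leq 2^{-\ell}\,d_{j,\ell}\,E$ with $(d_{j,\ell})$ of unit $\ell^2(\Z^2)$-norm. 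Taking the geometric mean of these two bounds, the powers of $2^j$ and $2^\ell$ cancel exactly when substituted into the master estimate, leaving
$$\|b\|_{\dot W^{s,4}}^2 \lesssim M\,E\sum_{j,\ell:\ell\leq j+N_0}c_j\,d_{j,\ell},$$
so the whole problem is reduced to showing that this residual sum is $O(1)$.

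The closure of this last double sum is the main obstacle. A naive Cauchy--Schwarz in $(j,\ell)$ does not work, since $c_j$ does not depend on $\ell$ and the set $\{\ell:\ell\leq j+N_0\}$ is infinite. The argument must exploit the anisotropic structure: bound $(i)$ implicitly forces $d_{j,\ell}\lesssim 2^{3\ell/2}c_j M/(E\,2^{(1+2s)j})$ as $\ell\to-\infty$ (since then $2^\ell\|\Delta_j\Delta_\ell^\v b\|_{L^2}$ inherits a $2^{3\ell/2}$ factor), while in the complementary regime $\ell\sim j$ the $\ell^2$-summability of $(d_{j,\ell})$ takes over. Splitting the $\ell$-sum at an appropriate $j$-dependent threshold and combining the two regimes through weighted Cauchy--Schwarz, in the spirit of the nine-term para-product decomposition employed in the proof of Lemma \ref{PRO}, should close the estimate and deliver \eqref{S3eq1}.
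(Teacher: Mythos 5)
Your reduction to the master estimate
$$\|b\|_{\dot W^{s,4}}^2 \lesssim \sum_{(j,\ell)\in\Z^2}2^{(1+2s)j+\frac{\ell}{2}}\|\D_j\D_\ell^\v b\|_{L^2}^2$$
via the embedding $\dB^0_{4,2}\hookrightarrow L^4$, the vertical square-function, and the anisotropic Bernstein inequality (Lemma~\ref{L2}) is correct and is exactly the first half of the paper's proof.

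The second half has a genuine gap, and you essentially diagnose it yourself. Taking the pointwise geometric mean of the two bounds on $\|\D_j\D_\ell^\v b\|_{L^2}$ with exponent exactly $\frac12$ makes the $2^j$ and $2^\ell$ weights cancel, but at the cost of a coefficient $c_j d_{j,\ell}$ with no residual decay in $\ell$, and $\sum_{j,\ell}c_j d_{j,\ell}$ is not controlled (as you note, $c_j$ does not depend on $\ell$, and $(d_{j,\ell})\in\ell^2(\Z^2)$, not $\ell^1$). Your proposed repair does not close: the relation you invoke, $d_{j,\ell}\leq 2^{\frac{3\ell}{2}-(1+2s)j}c_j M/E$ (which indeed holds for all $(j,\ell)$, not only asymptotically), gives geometric decay for $\ell$ small, but on the complementary range $L_j<\ell\leq j+N_0$ a Cauchy--Schwarz in $\ell$ leaves a $(j+N_0-L_j)^{1/2}$ factor. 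Unless $L_j$ sits at bounded distance from $j$ --- which is incompatible with the summability of the small-$\ell$ part unless $s=\frac14$ --- this factor is unbounded and one only recovers the inequality with a logarithmic loss. The analogy with the Bony decomposition of Lemma~\ref{PRO} is not the right tool here: that lemma is a product law, not an interpolation.

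The correct implementation --- which is what the paper does and what the analogous Lemma~\ref{lem1.1} also does --- is to avoid the geometric mean altogether. Fix a threshold $N\in\Z$ and split the master sum at $\ell-\frac23(1+2s)j=N$. In the region $\ell-\frac23(1+2s)j\leq N$, bound $\|\D_j\D_\ell^\v b\|_{L^2}^2$ using only $M=\|b\|_{\cB^{1+2s,-\frac12}_{2,\infty}}$; this turns the summand into $c_j^2\,2^{\frac32(\ell-\frac23(1+2s)j)}M^2$, and the $\ell$-sum is a convergent geometric series giving $2^{\frac{3N}{2}}M^2$. In the complementary region use only $E=\|\p_3 b\|_{L^2}$ and get $2^{-\frac{3N}{2}}E^2$ the same way. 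Optimizing $2^{\frac{3N}{2}}M^2+2^{-\frac{3N}{2}}E^2$ over $N$ gives the factor $ME$, hence the claimed $\frac12$--$\frac12$ split. The lesson is that the exponent $\frac12$ emerges only after optimizing the free threshold, not by inserting it into the dyadic blocks from the start. You should replace your closure step by this threshold split; everything before it can stand as written.
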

\begin{proof} Note that $\dot B^0_{p,2}\hookrightarrow L^p$ for
$p\in [2,\infty[$ (see Theorem 2.40 of \cite{bcd}), we have \beno
\begin{split}\|b\|_{\dot W^{s,4}}=\||D|^sb\|_{L^4}\lesssim&
\||D|^sb\|_{{\dB}^0_{4,2}}\\
=&\Bigl(\sum_{j\in\Z}2^{2js}\|\D_j b\|_{L^4}^2\Bigr)^{\f12}=\Bigl(\sum_{j\in\Z}2^{2js}\bigl\|\bigl(\sum_{\ell\in\Z}|\D_j \D_\ell^\v b|^2\bigr)^{\f12}\bigr\|_{L^4}^2\Bigr)^{\f12}\\
\lesssim & \Bigl(\sum_{(j,\ell)\in\Z^2}2^{2js}\|\Djl
b\|_{L^4}^2\Bigr)^{\f12}.
\end{split}
\eeno For any integer $N,$ we get, by applying Lemma \ref{L2}, that
\beno
\begin{split}
\sum_{(j,\ell)\in\Z^2}2^{2js}\|\Djl b\|_{L^4}^2 \lesssim &
\sum_{(j,\ell)\in\Z^2}2^{j(1+2s)}2^{\f\ell2}\|\Djl b\|_{L^2}^2\\
\lesssim
&\sum_{\substack{\ell-\f23(1+2s)j\leq N\\
j\in\Z}}c_j^22^{\f32\bigl(\ell-\f23(1+2s)j\bigr)}\|b\|_{\cB^{1+2s,-\f12}_{2,\infty}}^2\\
&+
\sum_{\substack{\ell-\f23(1+2s)j> N\\
j\in\Z}}c_j^22^{-\f32\bigl(\ell-\f23(1+2s)j\bigr)}\|\p_3b\|_{L^2}^2\\
\lesssim&
2^{\f32N}\|b\|_{\cB^{1+2s,-\f12}_{2,\infty}}^2+2^{-\f32N}\|\p_3b\|_{L^2}^2.
\end{split}
\eeno Choosing $N$ in the above inequality so that \beno 2^N\sim
\Bigl(\f{\|\p_3b\|_{L^2}^2}{\|b\|_{\cB^{1+2s,-\f12}_{2,\infty}}^2}\Bigr)^{\f13}
\eeno leads to \eqref{S3eq1}. This finishes the proof of the lemma.
\end{proof}

\setcounter{equation}{0}
\section{$L^1_T(\cB^{2,\f12})$ estimate of $\bY_t$}\label{sect4}

Let $\bar{Y}$ be a smooth enough solution of \eqref{1.2b} on
$[0,T].$ The goal of this section is to present the {\it a priori}
$L^1_T(\mbox{Lip})$ estimate of $\bar{Y}_t.$ Instead of handling the
$L^1_T(\cB^{\f52,0})$ norm of $\bY_t$ as that in
\cite{XLZMHD1,XZ15}, here we shall deal with the
$L^1_T(\cB^{2,\f12})$ norm of $\bY_t.$ For simplicity, we shall
denote $\dv_z=\dv,$ $\nabla_z=\na$ and $\Delta_z=\Delta$ for short
in this section.

\begin{lem}\label{S3-Lem1}
{\sl  Let $Y$ is a smooth enough solution of \eqref{B19} on $[0,T].$
Then for $t\leq T,$ one has \beq
\begin{split}\label{2.1}
\|Y_t\|_{\wt L^\infty_t(\mathcal{B}^{0,\f12})} &+\|\pa_3Y\|_{\wt
L^\infty_t(\mathcal{B}^{0,\f12})} +\|\D Y\|_{\wt
L^\infty_t(\mathcal{B}^{0,\f12})}+\|Y_t\|_{\wt{L}^2_t(\cB^{1,\f12})}+\|\p_3
Y\|_{\wt{L}^2_t(\cB^{1,\f12})}\\
&+\|Y_t\|_{L^1_t(\mathcal{B}^{2,\f12})} \lesssim
\|Y_1\|_{\mathcal{B}^{0,\f12}}+
\|\pa_3Y_0\|_{\mathcal{B}^{0,\f12}}+\|\D
Y_0\|_{\mathcal{B}^{0,\f12}} +\| f\|_{L^1_t(\mathcal{B}^{0,\f12})}.
\end{split}
\eeq }
\end{lem}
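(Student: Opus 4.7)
The goal is to derive \emph{a priori} estimates for the linear damped anisotropic wave equation $Y_{tt}-\Delta Y_t-\partial_3^2Y=f$. I would apply the dyadic blocks $\Delta_j\Delta_\ell^{\rm v}$ to \eqref{B19}, which preserves the form of the equation, and work block-by-block on $Y_{j,\ell}\eqdef\Delta_j\Delta_\ell^{\rm v}Y$, summing with the weight $2^{\ell/2}$ at the end. All six quantities on the left-hand side of \eqref{2.1} are natural from either a basic energy or a Duhamel/spectral decomposition, but they split into two groups with very different difficulty.

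\textbf{Basic energy (easy group).} Multiplying the block equation by $\partial_t Y_{j,\ell}$ and integrating in space yields
\begin{equation*}
\frac{1}{2}\frac{d}{dt}\bigl(\|Y_{j,\ell,t}\|_{L^2}^2+\|\partial_3Y_{j,\ell}\|_{L^2}^2\bigr)+\|\nabla Y_{j,\ell,t}\|_{L^2}^2=(f_{j,\ell},Y_{j,\ell,t})_{L^2}.
\end{equation*}
Dividing by $E_{j,\ell}^{1/2}\eqdef(\|Y_{j,\ell,t}\|_{L^2}^2+\|\partial_3 Y_{j,\ell}\|_{L^2}^2)^{1/2}$ and integrating in time gives, in a standard way,
\begin{equation*}
\|Y_{j,\ell,t}\|_{L^\infty_tL^2}+\|\partial_3Y_{j,\ell}\|_{L^\infty_tL^2}+\|\nabla Y_{j,\ell,t}\|_{L^2_tL^2}\lesssim E_{j,\ell}(0)^{1/2}+\|f_{j,\ell}\|_{L^1_tL^2}.
\end{equation*}
Multiplying by $2^{\ell/2}$ and summing recovers the $\wt L^\infty_t(\mathcal{B}^{0,1/2})$ bounds on $Y_t$ and $\partial_3 Y$ and, via horizontal Bernstein $2^j\|Y_{j,\ell,t}\|_{L^2}\lesssim\|\nabla Y_{j,\ell,t}\|_{L^2}$, the $\wt L^2_t(\mathcal{B}^{1,1/2})$ bound on $Y_t$. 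The companion $\wt L^2_t(\mathcal{B}^{1,1/2})$ bound for $\partial_3 Y$ is obtained from the auxiliary identity
\begin{equation*}
\frac{d}{dt}(Y_{j,\ell,t},Y_{j,\ell})_{L^2}-\|Y_{j,\ell,t}\|_{L^2}^2+\frac{1}{2}\frac{d}{dt}\|\nabla Y_{j,\ell}\|_{L^2}^2+\|\partial_3Y_{j,\ell}\|_{L^2}^2=(f_{j,\ell},Y_{j,\ell})_{L^2},
\end{equation*}
whose space-time integration yields $L^2_tL^2$ control of $\partial_3 Y_{j,\ell}$ in terms of already-controlled data plus an extra factor $2^j$ that comes from Bernstein on $Y_{j,\ell}$ itself.

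\textbf{The crucial bound $Y_t\in L^1_t(\mathcal{B}^{2,1/2})$.} This cannot be obtained by pure energy methods: one must exploit the two characteristic roots $\lambda_\pm$ from \eqref{C1} of the symbolic ODE $\hat Y_{tt}+|\xi|^2\hat Y_t+\xi_3^2\hat Y=\hat f$. I would use the Duhamel representation
\begin{equation*}
\hat Y_t(t,\xi)=\partial_tK(t,\xi)\hat Y_1(\xi)-\xi_3^2\,K(t,\xi)\hat Y_0(\xi)+\int_0^t\partial_tK(t-s,\xi)\hat f(s,\xi)\,ds,
\end{equation*}
with $K(t,\xi)=(e^{\lambda_+t}-e^{\lambda_-t})/(\lambda_+-\lambda_-)$, and evaluate $\int_0^\infty|\partial_tK(t,\xi)|\,dt$ on each block in the two regimes described by \eqref{C1}--\eqref{C2}. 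In the parabolic regime $|\xi|^2\ge 2|\xi_3|$ one has $\lambda_-\sim-|\xi|^2$, $\lambda_+\sim-\xi_3^2/|\xi|^2$, and $|\lambda_+-\lambda_-|\sim|\xi|^2$, so $\int_0^\infty|\partial_tK|\,dt\lesssim|\xi|^{-2}$; in the oscillatory regime $|\xi|^2<2|\xi_3|$ one has $\Re\lambda_\pm=-|\xi|^2/2$, $|\lambda_\pm|\sim|\xi_3|$, $|\lambda_+-\lambda_-|\sim|\xi_3|$, and again $\int_0^\infty|\partial_tK|\,dt\lesssim|\xi|^{-2}$. Coupled with analogous estimates on $\xi_3^2\int_0^\infty|K|\,dt$, this absorbs the $2^{2j}$ weight and yields the desired $L^1_t(\mathcal{B}^{2,1/2})$ bound after summing.

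\textbf{Closing $\Delta Y\in\wt L^\infty_t(\mathcal{B}^{0,1/2})$ and main obstacle.} The remaining bound follows by rewriting the equation as $\Delta Y_t=Y_{tt}-\partial_3^2Y-f$ and integrating in time:
\begin{equation*}
\Delta Y(t)=\Delta Y_0+\bigl(Y_t(t)-Y_1\bigr)-\int_0^t\partial_3^2Y(s)\,ds-\int_0^tf(s)\,ds,
\end{equation*}
where $\int_0^t\partial_3^2Y_{j,\ell}\,ds$ is controlled by $2^\ell\|\partial_3 Y_{j,\ell}\|_{L^1_tL^2}$, which follows from the analogue of the Duhamel estimate above applied to $\partial_3Y$ (or equivalently from the observation that $\partial_3Y$ solves the same linear problem with initial data and forcing obtained by differentiating). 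The main difficulty throughout is Step~3: establishing the $L^1_t$-in-time control at the top anisotropic regularity, because the damping is isotropic while the restoring force $-\partial_3^2 Y$ is purely vertical; handling this asymmetry requires the two-regime spectral analysis above, and in particular uniform bounds on the Duhamel kernel through the transition $|\xi|^2\sim|\xi_3|$.
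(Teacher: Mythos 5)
Your basic energy estimates coincide in substance with the paper's: apply $\Djl$ to \eqref{B19}, pair once with $\Djl Y_t$ and once with $\D\Djl Y$ (you pair with $\Djl Y$, which works after multiplying by $2^{2j}$ and invoking Bernstein), and add. The paper packages the sum as a single combined energy $g_{j,\ell}^2 = \f12\bigl(\|\Djl Y_t\|_{L^2}^2 + \|\Djl\pa_3 Y\|_{L^2}^2 + \f14\|\Djl\D Y\|_{L^2}^2\bigr) - \f14\bigl(\Djl Y_t\,|\,\Djl\D Y\bigr)$, which by \eqref{d6} is equivalent to all three $L^2$ norms and therefore yields the three $\wt L^\infty_t(\cB^{0,\f12})$ bounds simultaneously. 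Your alternative route to $\D Y\in\wt L^\infty_t(\cB^{0,\f12})$ via the time-integrated equation $\D Y(t)=\D Y_0+Y_t(t)-Y_1-\int_0^t\pa_3^2Y\,ds-\int_0^t f\,ds$ would \emph{not} close as written: it requires controlling $2^{2\ell}\|\Djl Y\|_{L^1_tL^2}$, but in the oscillatory regime $2^{2j}\lesssim 2^{\ell}$ the dissipation produces only $2^{2j}\|\Djl\pa_3Y\|_{L^1_tL^2}$, which falls short by the unbounded factor $2^{\ell-2j}$. The fix is immediate (and makes the integrated equation unnecessary): in that regime $\|\Djl\D Y\|_{L^2}\lesssim 2^{2j-\ell}\|\Djl\pa_3 Y\|_{L^2}\leq\|\Djl\pa_3 Y\|_{L^2}$ by Bernstein, so the $L^\infty_t$ bound on $\D Y$ follows from the one on $\pa_3 Y$.

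For the top-order bound $Y_t\in L^1_t(\cB^{2,\f12})$ you depart substantially from the paper, which stays inside the energy framework and, in the regime $2^{2j}>2^{\ell+1}$, uses a two-step bootstrap: the reduced-rate estimate \eqref{S4eq9} already controls $2^{2\ell}\|\Djl Y\|_{L^1_tL^2}$, and feeding this into the plain $L^2$-energy of $\Djl Y_t$ with $\pa_3^2\Djl Y$ treated as a source upgrades the rate to $2^{2j}$ in \eqref{d9} — no explicit Fourier multipliers are needed. You instead compute with the Duhamel kernel $K(t,\xi)=(e^{\lambda_+t}-e^{\lambda_-t})/(\lambda_+-\lambda_-)$ and assert $\int_0^\infty|\pa_tK|\,dt\lesssim|\xi|^{-2}$. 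That final bound is in fact correct, but the justification you give — $|\lambda_+-\lambda_-|\sim|\xi|^2$ in the parabolic regime — fails near the transition $|\xi|^2\sim 2|\xi_3|$, where $\lambda_+-\lambda_-\to 0$ and a crude triangle-inequality bound yields only $\int|\pa_tK|\,dt\lesssim|\xi|^2/\xi_3^2$, which is useless. A uniform proof must exploit the cancellation: $\pa_tK$ changes sign exactly once, say at $t^\ast$, so $\int_0^\infty|\pa_tK|\,dt=2K(t^\ast)$, and one then checks that $K(t^\ast)\lesssim|\xi|^{-2}$ holds uniformly across the transition. You flag this as the main obstacle without resolving it; the paper's two-step bootstrap bypasses the transition issue entirely and is in that sense the more robust argument, while a carefully completed kernel estimate has the virtue of isolating the precise integrability mechanism behind \eqref{C1}--\eqref{C2}.
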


\begin{proof} The proof of this lemma basically follows from
Proposition 4.1 of \cite{XLZMHD1, XZ15}. For completeness, we
present the details here. By applying the operator $\D_j\D_\ell^\v$
to \eqref{B19} and then taking the $L^2$ inner product of the
resulting equation with $\D_j\D_\ell^\v Y_t,$ we write
\beq\label{S4eq10}
\begin{aligned}
\f12\f{d}{dt}\bigl(\|\D_j\D_\ell^\v
Y_t\|_{L^2}^2+\|\D_j\D_\ell^\v\pa_{3}Y\|_{L^2}^2\bigr) +\|\na
\D_j\D_\ell^\v{Y}_t\|_{L^2}^2 =\bigl(\D_j\D_\ell^\v f |
\D_j\D_\ell^\v Y_t\bigr)_{L^2}.
\end{aligned}
\eeq Along the same line, one has
 \beno (\D_j\D_\ell^\v Y_{tt} |
\D\D_j\D_\ell^\v Y)-\f12\f{d}{dt}\|\D\D_j\D_\ell^\v
Y\|_{L^2}^2-\|\pa_3\na\D_j\D_\ell^\v Y\|_{L^2}^2=(\D_j\D_\ell^\v f |
\D\D_j\D_\ell^\v Y). \eeno Notice that \beno (\D_j\D_\ell^\v Y_{tt}
| \D\D_j\D_\ell^\v Y)=\f{d}{dt}(\D_j\D_\ell^\v Y_{t}|
\D\D_j\D_\ell^\v Y) +\|\na\D_j\D_\ell^\v Y_{t}\|_{L^2}^2, \eeno so
that there holds \beq\label{d4}
\begin{split}
\f{d}{dt}\Bigl(\f12\|\D\D_j\D_\ell^\v Y\|_{L^2}^2-&(\D_j\D_\ell^\v
Y_{t} |
\D\D_j\D_\ell^\v Y)\Bigr)\\
-&\|\na\D_j\D_\ell^\v Y_t\|_{L^2}^2+\|\pa_3\na\D_j\D_\ell^\v
Y\|_{L^2}^2 =-(\D_j\D_\ell^\v f |\D\D_j\D_\ell^\v Y).
\end{split}
\eeq By summing up \eqref{S4eq10} with $\f14$ of \eqref{d4}, we
obtain \beq\label{d5}
\begin{split}
\f{d}{dt}g_{j,\ell}^2(t) +\f34\|\na\D_j\D_\ell^\v Y_t\|_{L^2}^2&
+\f14\|\pa_3\na\D_j\D_\ell^\v Y\|_{L^2}^2\\
&=\bigl(\D_j\D_\ell^\v  f\ |\ \D_j\D_\ell^\v
Y_t-\f14\D\D_j\D_\ell^\v Y\bigr),
\end{split}
\eeq where
 \beno\begin{split}
g_{j,\ell}^2(t) \eqdefa \f12\Bigl(\|\D_j\D_\ell^\v Y_t(t)\|_{L^2}^2&
+\|\D_j\D_\ell^\v \pa_3Y(t)\|_{L^2}^2 +\f14\|\D_j\D_\ell^\v \D
Y(t)\|_{L^2}^2\Bigr)
\\&\qquad\qquad\qquad
-\f14\bigl(\D_j\D_\ell^\v Y_t(t) | \D_j\D_\ell^\v \D Y(t)\bigr).
\end{split}
\eeno It is easy to observe that
 \beq \label{d6}
g_{j,\ell}^2(t) \sim \|\D_j\D_\ell^\v Y_t(t)\|_{L^2}^2
+\|\D_j\D_\ell^\v\pa_3 Y(t)\|_{L^2}^2 +\|\D_j\D_\ell^\v \D
Y(t)\|_{L^2}^2. \eeq Now according to the heuristic analysis
presented at the beginning of Section \ref{sect3}, we split the
frequency analysis into the following two cases:

\no$\bullet$ \underline{When $j\leq \f{\ell+1}2$}

 In this case,
one has \beno g^2_{j,\ell}(t)\sim  \|\D_j\D_\ell^\v Y_t(t)\|_{L^2}^2
+\|\D_j\D_\ell^\v \pa_3Y(t)\|_{L^2}^2, \eeno and Lemma \ref{L2}
implies that \beno \f34\|\na \D_j\D_\ell^\v Y_t\|_{L^2}^2
+\f14\|\pa_3\na\Djl Y\|_{L^2}^2 \geq  c2^{2j}\bigl(\|\D_j\D_\ell^\v
Y_t\|_{L^2}^2 +\|\Djl\p_3 Y\|_{L^2}^2\bigr). \eeno Hence it follows
from \eqref{d5} that \beq \label{S4eq8}
\begin{split} \|\Djl Y_t\|_{L^\infty_t(L^2)}&+\|\Djl\pa_3 Y\|_{L^\infty_t(L^2)}
+\|\Djl\D Y\|_{L^\infty_t(L^2)}
\\&
+2^{2j}\bigl(\|\Djl Y_t\|_{L^1_t(L^2)}+\|\Djl \pa_3 Y\|_{L^1_t(L^2)}\bigr)\\
\lesssim &\|\Djl Y_1\|_{L^2} +\|\Djl\pa_3 Y_0\|_{L^2}+\|\Djl
f\|_{L^1_t(L^2)}.
\end{split}
\eeq

\no$\bullet$\underline{When $j> \f{\ell+1}2$}

In this case, we have \beno g_{j,\ell}^2(t) \sim \|\D_j\D_\ell^\v
Y_t(t)\|_{L^2}^2 +\|\Djl\D Y(t)\|_{L^2}^2 \eeno and Lemma \ref{L2}
implies that \beno
\begin{split}
\f34\|\na \D_j\D_\ell^\v Y_t\|_{L^2}^2 +\f14\|\pa_3\na\Djl
Y\|_{L^2}^2 \geq & c\bigl(2^{2j}\|\D_j\D_\ell^\v Y_t\|_{L^2}^2
+2^{2j}2^{2\ell}\|\Djl Y\|_{L^2}^2\bigr)\\
\geq & c\f{2^{2\ell}}{2^{2j}}\bigl(\|\D_j\D_\ell^\v Y_t\|_{L^2}^2
+\|\Djl\D Y\|_{L^2}^2\bigr).
\end{split}
\eeno Then we deduce from \eqref{d5} that \beq\label{S4eq9}
\begin{split} \|\Djl Y_t\|_{L^\infty_t(L^2)}
&+\|\Djl\pa_3 Y\|_{L^\infty_t(L^2)} +\|\Djl\D Y\|_{L^\infty_t(L^2)}
\\&
+c\Bigl(\f{2^{2\ell}}{2^{2j}}\|\Djl Y_t\|_{L^1_t(L^2)}+2^{2\ell}\|\Djl Y\|_{L^1_t(L^2)}\Bigr)\\
\lesssim &\|\Djl Y_1\|_{L^2}+\|\D\Djl Y_0\|_{L^2}+\|\Djl
f\|_{L^1_t(L^2)}.
\end{split}
\eeq On the other hand,  it is easy to observe from \eqref{B19} that
$$
\f12\f{d}{dt}\|\D_j\D_\ell^\v Y_t\|_{L^2}^2+\|\na \D_j\D_\ell^\v
Y_t\|_{L^2}^2 =\bigl(\pa_3^2\D_j\D_\ell^\v Y+\D_j\D_\ell^\v f\ |\
\D_j\D_\ell^\v Y_t\bigr)_{L^2},
$$
from which, Lemma \ref{L2} and \eqref{S4eq9}, we deduce that for $
j>\f{\ell+1}2$ \beq\label{d9}
\begin{split}
\|\Djl Y_t\|_{L^\infty_t(L^2)}&+2^{2j}\|\Djl Y_t\|_{L^1_t(L^2)}\\
&\lesssim
\|\Djl Y_1\|_{L^2}+2^{2\ell}\|\Djl Y\|_{L^1_t(L^2)}+\|\Djl f\|_{L^1_t(L^2)}\\
&\lesssim \|\Djl Y_1\|_{L^2}+\|\D\Djl Y_0\|_{L^2}+\|\Djl
f\|_{L^1_t(L^2)}.
\end{split}
\eeq In view of \eqref{S4eq8}-\eqref{d9}, we obtain for all
$(j,\ell)\in\Z^2,$  that \beq\label{S4q10qp}
\begin{split}
\|\Djl Y_t\|_{L^\infty_t(L^2)}+&\|\Djl\pa_3 Y\|_{L^\infty_t(L^2)}
+\|\Djl\D Y\|_{L^\infty_t(L^2)}
+2^{2j}\|\Djl Y_t\|_{L^1_t(L^2)}\\
\lesssim & \|\Djl Y_1\|_{L^2}+\|\Djl\pa_3 Y_0\|_{L^2}+\|\Djl\D
Y_0\|_{L^2}+\|\Djl f\|_{L^1_t(L^2)}.
\end{split}
\eeq Whereas by integrating \eqref{d5} over $[0,t],$ we get \beno
\begin{split}
&\|g_{j,\ell}^2\|_{L^\infty(0,t)} +\f34\|\na\D_j\D_\ell^\v
Y_t\|_{L^2_t(L^2)}^2
+\f14\|\na\D_j\D_\ell^\v\pa_3 Y\|_{L^2_t(L^2)}^2\\
&\leq  \|\Djl
Y_1\|_{L^2}^2+\|\Djl\pa_3 Y_0\|_{L^2}^2+\|\Djl\D Y_0\|_{L^2}^2\\
&\qquad+ \|\D_j\D_\ell^\v f\|_{L^1_t(L^2)}\bigl(\|\D_j\D_\ell^\v
Y_t\|_{L^\infty_t(L^2)}+\f14\|\D\D_j\D_\ell^\v Y\|_{L^\infty_t(L^2)}\bigr)\\
&\leq \|\Djl Y_1\|_{L^2}^2+\|\Djl\pa_3 Y_0\|_{L^2}^2+\|\Djl\D
Y_0\|_{L^2}^2+C\|\D_j\D_\ell^\v
f\|_{L^1_t(L^2)}^2+\f12\|g_{j,\ell}^2\|_{L^\infty(0,t)},
\end{split}
\eeno which in particular gives rise to $$\longformule{
\|\na\D_j\D_\ell^\v Y_t\|_{L^2_t(L^2)}+\|\na\D_j\D_\ell^\v\pa_3
Y\|_{L^2_t(L^2)}}{{}\lesssim  \|\Djl Y_1\|_{L^2}+\|\Djl\pa_3
Y_0\|_{L^2}+\|\Djl\D Y_0\|_{L^2}+\|\Djl f\|_{L^1_t(L^2)}.} $$
Summing up the above inequality with  \eqref{S4q10qp} and
multiplying the inequality by $2^{\f{\ell}2}$ and then summing up
the resulting inequality for $(j,\ell)\in\Z^2,$ we achieve
\eqref{2.1}. This completes the proof of the lemma.
\end{proof}

\begin{prop}\label{Lip} {\sl  Let $\bY$ is a smooth enough solution of \eqref{1.2b} on
$[0,T].$ Then there exist sufficiently small positive constants,
$c_0, \e_0,$ so that if \beq\label{Lip0} \|Y_1\|_{\cB^{0,\f12}}
+\|\p_3\bY_0\|_{\cB^{0,\f12}} +\| \bY_0\|_{\cB^{2,\f12}} \le c_0
\andf \e\leq \e_0, \eeq   we have \beq\label{Lip1}\begin{split} \|\b
Y_t\|_{\wt L^\infty_t(\mathcal{B}^{0,\f12})}& +\|\pa_3\b Y\|_{\wt
L^\infty_t(\mathcal{B}^{0,\f12})} +\|\b Y\|_{\wt
L^\infty_t(\mathcal{B}^{2,\f12})}+\|\b Y_t\|_{\wt
L^2_t(\mathcal{B}^{1,\f12})} +\|\pa_3\b Y\|_{\wt
L^2_t(\mathcal{B}^{1,\f12})}\\
& +\|\b Y_t\|_{L^1_t(\mathcal{B}^{2,\f12})}+\|\na \vv
p\|_{L^1_t(\cB^{0,\f12})} \leq  C\Bigl(\|\b
Y_1\|_{\mathcal{B}^{0,\f12}} +\|\pa_3\b
Y_0\|_{\mathcal{B}^{0,\f12}}+\|\b Y_0\|_{\mathcal{B}^{2,\f12}}
\Bigr)
\end{split}
\eeq for any $ t\leq T.$ }
\end{prop}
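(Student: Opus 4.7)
The plan is a bootstrap/continuity argument built on the linear estimate of Lemma \ref{S3-Lem1} applied to the equation \eqref{1.2b}. Denoting by $\cN(t)$ the sum of all norms of $\bY$ and $\bY_t$ on the left-hand side of \eqref{Lip1}, except the pressure piece, Lemma \ref{S3-Lem1} immediately yields
\begin{equation*}
  \cN(t) \lesssim \|Y_1\|_{\cB^{0,\f12}} + \|\p_3 \bY_0\|_{\cB^{0,\f12}} + \|\bY_0\|_{\cB^{2,\f12}} + \|f\|_{L^1_t(\cB^{0,\f12})},
\end{equation*}
where $\bY_0 = -\wt Y$ is controlled by $\epsilon$ thanks to \eqref{1.2a} and the hypothesis $b_0 = e_3 + \epsilon\phi$. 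The game is then to bound the source $f$ from \eqref{S2-27} in $L^1_t(\cB^{0,\f12})$ by either small constants times $\cN(t)$ or quadratic expressions in $\cN(t)$.

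I would decompose $f$ into three pieces: (i) the term $\cB^t\na\cdot[(\mathcal{A}\mathcal{A}^t-Id)\cB^t\na \bY_t]$, (ii) the variable-coefficient Laplacian correction $\cB^t\na\cdot(\cB^t\na\bY_t)-\D\bY_t$, and (iii) the pressure term $-(\cB\mathcal{A})^t\na\vv p$. For (ii), the matrix $\cB - Id$ is of order $\epsilon$ by \eqref{CHab}--\eqref{1.2fk} together with $b_0 = e_3 + \epsilon\phi$, so the product law \eqref{GH1} gives a bound $\lesssim \epsilon\,\|\bY_t\|_{L^1_t(\cB^{2,\f12})}$. For (i), I expand $\mathcal{A} = (Id + \cB^t\na\wt Y + \cB^t\na\bY)^{-1}$ in a Neumann series (legal because both $\epsilon$ and the a~priori bound on $\bY$ are small under \eqref{Lip0}) and use \eqref{GH1} once more to get a bound of the form $(\epsilon + \|\bY\|_{\wt{L}^\infty_t(\cB^{2,\f12})})\,\|\bY_t\|_{L^1_t(\cB^{2,\f12})}$.

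The pressure is treated via the implicit formula \eqref{PR}, which I read schematically as $\na\vv p = K_1(\na\vv p) + K_2(\p_3\bY,\bY_t)$. The operator $K_1$ carries a factor $\cB\mathcal{A}\mathcal{A}^t\cB^t-Id$ or $\det(\cB^{-1})-1$, each of which is $O(\epsilon + \cN(t))$, so $K_1$ is of small operator norm on $L^1_t(\cB^{0,\f12})$ and $Id - K_1$ is invertible by Neumann series. The remainder $K_2$ is quadratic in $(\p_3\bY, \bY_t)$; applying \eqref{GH1} in the form $\|ab\|_{\cB^{0,\f12}}\lesssim \|a\|_{\cB^{1,\f12}}\|b\|_{\cB^{1,\f12}}$ together with Cauchy--Schwarz in $t$ yields
\begin{equation*}
  \|\na\vv p\|_{L^1_t(\cB^{0,\f12})} \lesssim \|\p_3\bY\|_{\wt{L}^2_t(\cB^{1,\f12})}^2 + \|\bY_t\|_{\wt{L}^2_t(\cB^{1,\f12})}^2 \lesssim \cN(t)^2.
\end{equation*}
Combining everything gives $\cN(t) + \|\na\vv p\|_{L^1_t(\cB^{0,\f12})} \le C_0\,(\text{initial data}) + C_1(\epsilon + \cN(t))\cN(t)$, and a standard continuation argument closes the bootstrap whenever $c_0$ and $\epsilon_0$ in \eqref{Lip0} are chosen sufficiently small, delivering \eqref{Lip1}.

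The main obstacle is the pressure. Two difficulties are intertwined: first, $\na\vv p$ is only defined implicitly by \eqref{PR}, and the inversion of $Id - K_1$ must be done in the anisotropic Besov framework, where the composition with the variable coefficients coming from $\cB(z)$ and $\mathcal{A}(t,z)$ must be tracked carefully; second, we must land on the $\wt L^2_t$-type norms when estimating the quadratic nonlinearity, so that it is absorbed by the dissipation furnished by the left-hand side of \eqref{Lip1}, rather than onto the $L^1_t(\cB^{2,\f12})$ norm of $\bY_t$, which would destroy the bootstrap. The anisotropic product rules of Lemma \ref{L3} and \eqref{GH1} are the key technical tools making this possible.
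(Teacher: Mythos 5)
There is a genuine gap in how you handle the coefficient matrices $\cB-Id$ and $\na\wt Y$. You assert that $\cB-Id$ is "of order $\e$ by \eqref{CHab}--\eqref{1.2fk}" and then apply the product law \eqref{GH1} to get a factor $\e\,\|\bY_t\|_{L^1_t(\cB^{2,\f12})}$; similarly your Neumann expansion of $\cA=(Id+\cB^t\na\wt Y+\cB^t\na\bY)^{-1}$ silently assumes $\cB^t\na\wt Y$ is small in $\cB^{1,\f12}$. Neither is available directly: the entries of $A_2(y(w(z)))$ and $A_3(w(z))$ in \eqref{CHab}--\eqref{1.2fh} are integrals $\int_0^{w_3(z)}\cdots\,dy_3'$ of compactly supported integrands, so for $z_3$ above the support of $\phi$ they stabilize to nonzero $z_\h$-dependent constants that do not decay as $z_3\to+\infty$. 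Such a profile is not in $L^2(\R^3)$, hence a fortiori not in $\cB^{1,\f12}$; worse, $\D^\v_\ell$ annihilates a $z_3$-independent function, so the anisotropic norm you need does not even see it. The same issue afflicts $\wt Y$ itself, since $\p_{z_3}\wt Y = e_3 - b_0(y(w(z)))$ has compactly supported right-hand side and so its antiderivative does not tend to zero as $z_3\to+\infty$. Applying \eqref{GH1} or Lemma \ref{L3} with $a=\cB-Id$ or $a=\na\wt Y$ is therefore ill-founded, and the estimates of $f_1$, $f_2$ and of the operator $K_1$ in your implicit pressure formula do not close in $L^1_t(\cB^{0,\f12})$ as stated.

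The fix — and what the paper actually does — is a localization. Using the fact that $b_0-e_3$ is supported in $y_3\in[0,K]$, one introduces a cutoff $\eta(z_3)$ adapted to that support and decomposes $A_2=A_{2,1}+A_{2,2}$, $A_3=A_{3,1}+A_{3,2}$ (see \eqref{S4eq14}--\eqref{S4eq15}), producing a split $\cB-Id=\cB_1+\cB_2$ \eqref{S4eq16}. The localized parts $A_{2,1}$, $A_{3,1}$ are compactly supported in $z_3$ and do lie in $\cB^{1,\f12}$ with norm $O(\e)$; the residual parts $A_{2,2}$, $A_{3,2}$ have the structure $(1-\eta(z_3))A^\h(z_\h)$ and are controlled instead in the purely horizontal Besov norm $\dB^1_\h$, together with a dedicated horizontal-times-3D product estimate. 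Likewise $\wt Y$ must be defined with the cutoff and an offset, as in \eqref{S4eq17}, so that $\na\wt Y\in\cB^{1,\f12}$. These quantitative facts are the content of Lemmas \ref{S4lem3} and \ref{S4lem4} (proved in Appendix \ref{appenda}) and are an essential, non-obvious ingredient in closing the estimate; they cannot be replaced by a blanket "$\cB-Id=O(\e)$." Your bootstrap structure, your use of Lemma \ref{S3-Lem1}, and your absorption of the quadratic pressure term via the $\wt L^2_t(\cB^{1,\f12})$ dissipation are all correct and match the paper; it is precisely the treatment of the non-decaying Lagrangian coefficients that is missing.
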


\begin{proof}  Let us denote \beq \label{Lip2} T^\star\eqdefa
\sup\bigl\{ \ t\in [0,T]:\,\,
\|\na\bY\|_{L^\infty_t({\cB}^{1,\f12})}\leq \d\,\, \bigr\}. \eeq We
shall prove that for $\e_0$ and $c_0$ sufficiently small,
$T^\star=T.$

According to Lemma \ref{S3-Lem1}, it remains to estimate
$\|f\|_{L^1_t(\cB^{0,\f12})}$ for $f$ given by \eqref{S2-27}. Toward
this and in view of \eqref{S2-27},  we decompose $f$  as
\beq\label{S4eq12}
\begin{split}
f=& f_1+f_2+f_3\with\\
f_1\eqdefa& \cB^{t}\na\cdot\left(\bigl((\mathcal{A}-Id)
(\mathcal{A}-Id)^{t}+(\mathcal{A}-Id)+(\mathcal{A}-Id)^{t}\bigr)\cB^{t}\na\b
Y_t\right),\\
f_2\eqdefa &\cB^{t}\na\cdot\bigl(({\cB}-Id)^{t}\na\b Y_t\bigr)
+({\cB}-Id)^{t}\Delta\b Y_t,\\
f_3\eqdefa &-({\cB}\mathcal{A})^{t}\na\vv p,
\end{split}
\eeq where the matrix $\cB$ and $\na\vv p$ are determined
respectively by \eqref{1.2fk} and \eqref{PR}.

On the other hand,  in view of \eqref{S2eq13} and \eqref{CH},
 for $b_0=e_3+\e\phi$ with $\phi$ satisfying \eqref{S2eq5},  we have \beno
\begin{split}
|z_3-w_3(z)|\leq
&\e\int_0^{w_3(z)}\f{|\phi_3(y_\h(w_\h,w_3'),w_3')|}{1-\e|\phi_3(y_\h(w_\h,w_3'),w_3')|}\,dw_3'\\
\leq &2\e\int_0^K|\phi_3(y_\h(w_\h,w_3'),w_3')|\,dw_3'\leq 2\e
K\|\phi_3\|_{L^\infty},
\end{split}
\eeno whenever $\e\leq \f1{2\|\phi_3\|_{L^\infty}}.$  This proves
that as long as $\e\leq \e_1\eqdefa \min\left(
\f1{4\|\phi_3\|_{L^\infty}}, \f1{4K\|\phi_3\|_{L^\infty}}\right),$
there holds \beq\label{S4eq14qe} |z_3-w_3(z)|\leq \f12. \eeq Now for
$K$ given by \eqref{S2eq5}, let us introduce a smooth cut-off
function $\eta(z_3)$ so that $ \eta(z_3)= \left\{\begin{array}{l}
\displaystyle 0,\,\,z_3\geq 2+K, \\
\displaystyle 1,\,\,-1\leq z_3\leq 1+K, \\
\displaystyle 0,\,\,z_3\leq -2.
\end{array}\right. $ Then thanks to \eqref{S2eq13}, \eqref{CH} and \eqref{S4eq14qe}, we split $A_2(y(w(z)))$ given by \eqref{CHab} as \beq
\label{S4eq14}\begin{split}
 A_2(y(w(z)))=&A_{2,1}(z)+A_{2,2}(z)\with A_{2,2}(z)\eqdefa (1-\eta(z_3))A_2^\h(z_\h),\end{split}
\eeq and \beno \begin{split}
 A_{2,1}(z)\eqdefa&\eta(z_3)\begin{pmatrix}\int_0^{w_3(z)}\f{\p}{\p
y_1}\bigl(\f{b_0^1}{b_0^3}\bigr)(y_\h(z_\h,y_3),y_3)dy_3&\int_0^{w_3(z)}\f{\p}{\p
y_2}\bigl(\f{b_0^1}{b_0^3}\bigr)(y_\h(z_\h,y_3),y_3)dy_3&0 \\
\int_0^{w_3(z)}\f{\p}{\p y_1}\bigl(\f{b_0^2}{b_0^3}\bigr)(y_\h(z_\h,y_3),y_3)dy_3 & \int_0^{w_3(z)}\f{\p}{\p y_2}\bigl(\f{b_0^2}{b_0^3}\bigr)(y_\h(z_\h,y_3),y_3)dy_3 & 0\\
0 & 0 &  0 \end{pmatrix},  \\
A_2^\h(z_\h)\eqdefa& \begin{pmatrix}\int_0^{K}\f{\p}{\p
y_1}\bigl(\f{b_0^1}{b_0^3}\bigr)(y_\h(z_\h,y_3),y_3)dy_3&\int_0^{K}\f{\p}{\p
y_2}\bigl(\f{b_0^1}{b_0^3}\bigr)(y_\h(z_\h,y_3),y_3)dy_3&0 \\
\int_0^{K}\f{\p}{\p y_1}\bigl(\f{b_0^2}{b_0^3}\bigr)(y_\h(z_\h,y_3),y_3)dy_3 & \int_0^{K}\f{\p}{\p y_2}\bigl(\f{b_0^2}{b_0^3}\bigr)(y_\h(z_\h,y_3),y_3)dy_3 & 0\\
0 & 0 &  0 \end{pmatrix}.
\end{split}
\eeno Similarly, we decompose $A_3(w(z)),$ which is given by
\eqref{1.2fh}, as \beq \label{S4eq15}\begin{split}
 A_3(w(z))=&A_{3,1}(z)+A_{3,2}(z)\with A_{3,2}(z)\eqdefa
 (1-\eta(z_3))A_3^\h(z_\h),\end{split}
\eeq and \beno
\begin{split}
 A_{3,1}(z)\eqdefa& \begin{pmatrix}
1&0& 0\\
0&1&0\\
\eta(z_3)\int_{0}^{w_3(z)}\f{\p}{\p
z_1}\bigl(\f{1}{b_0^3(y_\h(z_\h,y_3),y_3)}\bigr)dy_3 &
\eta(z_3)\int_{0}^{w_3(z)}\f{\p}{\p
z_2}\bigl(\f{1}{b_0^3(y_\h(z_\h,y_3),y_3)}\bigr)dy_3 &
\f{1}{b_0^3} \end{pmatrix}, \\
 A_3^\h(z_\h)\eqdefa &
\begin{pmatrix}
0&0& 0\\
0&0&0\\
\int_{0}^{K}\f{\p}{\p
z_1}\bigl(\f{1}{b_0^3(y_\h(z_\h,y_3),y_3)}\bigr)dy_3 &
\int_{0}^{K}\f{\p}{\p
z_2}\bigl(\f{1}{b_0^3(y_\h(z_\h,y_3),y_3)}\bigr)dy_3 & 0
\end{pmatrix}.
\end{split}
\eeno Then by virtue of \eqref{1.2fk}, \eqref{S4eq14} and
\eqref{S4eq15}, we find \beq \label{S4eq16}\begin{split}
\cB-Id=&\underbrace{\bigl(\frak{A}_1^{-1}-Id\bigr)+\bigl(A_{3,1}-Id\bigr)+\bigl(\frak{A}_1^{-1}-Id\bigr)\bigl(A_{3,1}-Id\bigr)
+A_{3,1}\frak{A}_1^{-1}A_{2,1}}_{\cB_1}\\
&+\underbrace{A_{3,2}\frak{A}_1^{-1}+A_{3,1}\frak{A}_1^{-1}A_{2,2}+A_{3,2}\frak{A}_1^{-1}\bigl(A_{2,1}+A_{2,2}\bigr)}_{\cB_2},
\end{split}
\eeq where $\frak{A}_1(z)\eqdefa A_1(y_\h(z_\h,w_3(z)),w_3(z)).$

\begin{lem}\label{S4lem3}
{\sl Under the assumptions of Theorem \ref{thm1}, there exists a
sufficiently small constant $\e_{2}\leq \e_1,$ which depends  on
$\|\na \phi\|_{W^{2,\infty}},$ $\|\na\phi\|_{H^2},$ and
$\|\na_\h\phi\|_{L^\infty_\v(H^2_\h)},$ so that for $\e\leq \e_2,$
we have \beq\label{S4eq30}
\begin{split}
\|\frak{A}_1-Id\|_{\cB^{1,\f12}}+\|A_{2,1}\|_{\cB^{1,\f12}}+&\|A_{3,1}-Id\|_{\cB^{1,\f12}}+\|A_{2}^\h\|_{\dB^1_\h}+\|A_{3}^\h\|_{\dB^1_\h}\\
&\qquad\leq
C\e\bigl(\|\na\phi\|_{H^2}+\|\na_\h\phi\|_{L^\infty_\v(H^2_\h)}\bigr)\leq
1.
\end{split} \eeq}\end{lem}

 While it follows from \eqref{S4eq14qe} and the definition of the cut-off function,
$\eta(z_3),$ that \beno
\begin{split}
(1-\eta(z_3))\int_{-1}^{z_3}\bigl(e_3-&b_0(y_\h(z_\h,
w_3(z_\h,z_3')),w_3(z_\h,z_3'))\bigr)\,dz_3'\\
&=(1-\eta(z_3))\int_{-1}^{K+1}\bigl(e_3-b_0(y_\h(z_\h,
w_3(z_\h,z_3')),w_3(z_\h,z_3'))\bigr)\,dz_3', \end{split} \eeno so
that we deduce from \eqref{1.2fk} and \eqref{1.2a} that  \beq
\label{S4eq17}\begin{split}
\wt{Y}(z)=&\int_{-1}^{z_3}\bigl(e_3-b_0(y(w(z_\h,z_3')))\bigr)\,dz_3'-\int_{-1}^{K+1}\bigl(e_3-b_0(y(w(z_\h,z_3')))\bigr)\,dz_3'\\
=&\eta(z_3)\Bigl(\int_{-1}^{z_3}\bigl(e_3-b_0(y(w(z_\h,z_3')))\bigr)\,dz_3'
-\int_{-1}^{K+1}\bigl(e_3-b_0(y(w(z_\h,z_3')))\bigr)\,dz_3'\Bigr).
\end{split} \eeq

\begin{lem}\label{S4lem4}
{\sl Under the assumptions of Theorem \ref{thm1} and for $\e\leq
\e_3$ with $\e_3\leq \e_2$ and depending  on
$\|\na\phi\|_{W^{2,\infty}},$ $\|\phi\|_{H^3}$ and $
\|\phi\|_{L^\infty_\v(H^3_\h)},$ one has \beq\label{S4eq17a}
\|\p_3\wt{Y}\|_{\cB^{0,\f12}}+ \|\na\wt{Y}\|_{\cB^{1,\f12}}\leq
C\e\bigl(\|\phi\|_{H^3}+\|\phi\|_{L^\infty_\v(H^3_\h)}\bigr)\leq 1.
\eeq }
\end{lem}

We shall postpone the proof of the above two lemmas in the Appendix
\ref{appenda}.

With the above preparations, we now present the estimate of
$\|f\|_{L^1_t(\cB^{0,\f12})}.$ Since the estimate to all the terms
in $f_1$ and $f_2$ given by \eqref{S4eq12} are the same type, let us
present the detailed estimate to the following term: \beno
\cB^{t}\na\cdot\left((\mathcal{A}-Id)^{t}\cB^{t}\na\b Y_t\right)
=\bigl(Id+\cB_1^{t}+\cB_2^t\bigr)\na\cdot\left(\bigl(({Id+\na
\bar{Y}+\na\wt{Y}})^{-1}-Id\bigr)^t\cB^{t}\na\b Y_t\right) \eeno for
$\cB_1,\cB_2$ given by \eqref{S4eq16}.

 It follows from the law of
product, Lemma \ref{L3}, and \eqref{S4eq30} that  for $\e\leq \e_2,$
\beno
\begin{split}
\Bigl\|\bigl(Id+\cB_1^{t}\bigr)\na&\cdot\left(\bigl(({Id+\na
\bar{Y}+\na\wt{Y}})^{-1}-Id\bigr)^t\cB^t\na\b
Y_t\right)\Bigr\|_{L^1_t(\cB^{0,\f12})}\\
\lesssim
&\bigl(1+\|\cB_1\|_{\cB^{1,\f12}}\bigr)\bigl\|\bigl(({Id+\na
\bar{Y}+\na\wt{Y}})^{-1}-Id)^{t}\cB^t\na\b
Y_t\bigr\|_{L^1_t(\cB^{1,\f12})}\\
\lesssim &\bigl\|\bigl(({Id+\na
\bar{Y}+\na\wt{Y}})^{-1}-Id)^{t}\cB^t\na\b
Y_t\bigr\|_{L^1_t(\cB^{1,\f12})},
\end{split}
\eeno and \beno
\begin{split}
\Bigl\|\bigl(A_{3,2}&\frak{A}_1^{-1}\bigr)^t\na\cdot\left(\bigl(({Id+\na
\bar{Y}+\na\wt{Y}})^{-1}-Id\bigr)^{t}\cB^t\na\b
Y_t\right)\Bigr\|_{L^1_t(\cB^{0,\f12})}\\
\lesssim
&\bigl(1+\|\frak{A}_1-Id\|_{\cB^{1,\f12}}\bigr)\|A_3^\h\|_{\dB^{1}_\h}\bigl\|(1-\eta(z_3))\na\cdot\bigl(({Id+\na
\bar{Y}+\na\wt{Y}})^{-1}-Id\bigr)^{t}\cB^t\na\b
Y_t\bigr\|_{L^1_t(\cB^{0,\f12})}\\
\lesssim &\bigl\|\na\cdot\bigl(({Id+\na
\bar{Y}+\na\wt{Y}})^{-1}-Id\bigr)^{t}\cB^t\na\b
Y_t\bigr\|_{L^1_t(\cB^{0,\f12})}.
\end{split}
\eeno Along the same line, one can show that similar estimate holds
with $A_{3,2}\frak{A}_1^{-1}$ in the above inequality being replaced
by the other terms in $\cB_2.$ This proves that \beq \label{S4eq18}
\left\|\cB^{t}\na\cdot\left((\mathcal{A}-Id)^t\cB^t\na\b
Y_t\right)\right\|_{L^1_t(\cB^{0,\f12})}\lesssim
\bigl\|\bigl(({Id+\na
\bar{Y}+\na\wt{Y}})^{-1}-Id\bigr)^{t}\cB^t\na\b
Y_t\bigr\|_{L^1_t(\cB^{1,\f12})}.\eeq Using the fact that
$(Id+A)^{-1}-Id=\sum_{n=1}^\infty A^n$ and  \eqref{Lip2},
\eqref{S4eq17a}, we deduce by the law of product, Lemma \ref{L3},
that for $t\leq T^\star$ and $\e\leq \e_3,$ \beno
\begin{split} \bigl\|\bigl(({Id+\na
\bar{Y}+\na\wt{Y}})^{-1}&-Id)^{t}\cB^t\na\b
Y_t\bigr\|_{L^1_t(\cB^{1,\f12})}\\
\leq & C\bigl(\|\na
\bar{Y}\|_{L^\infty_t(\cB^{1,\f12})}+\|\na\wt{Y}\|_{\cB^{1,\f12}}\bigr)\|\cB^{t}\na\b
Y_t\bigr\|_{L^1_t(\cB^{1,\f12})}\\
\leq &
C(\d+\e)\bigl(1+\|\phi\|_{H^3}+\|\phi\|_{L^\infty_\v(H^3_\h)}\bigr)\|\cB^{t}\na\b
Y_t\bigr\|_{L^1_t(\cB^{1,\f12})}.
\end{split}
\eeno Whereas the proof of \eqref{S4eq18} ensures that \beno
\|\cB^{t}\na\b Y_t\bigr\|_{L^1_t(\cB^{1,\f12})}\lesssim \|\na \bar
Y_t\|_{L^1_t(\cB^{1,\f12})}. \eeno Hence, by virtue of
\eqref{S4eq18}, we infer for $t\leq T^\star$ and $\e\leq\e_3$ \beq
\label{S4eq19}
\left\|\cB^{t}\na\cdot\left((\mathcal{A}-Id)^{t}\cB^t\na\b
Y_t\right)\right\|_{L^1_t(\cB^{0,\f12})}\leq C
(\e+\d)\bigl(1+\|\phi\|_{H^3}+\|\phi\|_{L^\infty_\v(H^3_\h)}\bigr)\|\na\bar
Y_t\|_{L^1_t(\cB^{1,\f12})}. \eeq The same estimate holds for $f_1$
and $f_2$ given by \eqref{S4eq12}.

In order to deal with the estimate of $f_3$ given by \eqref{S4eq12},
we need the following lemma concerning the estimate of the pressure
function:

\begin{lem}\label{S4lem2}
{\sl Let $t\leq T^\star$ and $\e\leq \bar{\e}\leq \e_3, \d\leq
\bar{\d}$ for some sufficiently small constants $\bar{\e}$ and
$\bar{\d}.$ Then  there holds \beq\label{Lip3a}
\begin{split} \|\na \vv p\|_{L^1_t(\cB^{0,\f12})}\leq C\Bigl( \|\pa_3\b
Y\|_{L^2_t(\cB^{1,\f12})}^2+\|\b
Y_t\|_{L^2_t(\cB^{1,\f12})}^2\Bigr). \end{split} \eeq }
\end{lem}

Let us postpone the proof of this lemma after the proof of the
proposition.

In view of \eqref{S4eq12}, we get, by a similar proof of
\eqref{S4eq18}, that \beno
\begin{split}
\|f_3\|_{L^1_t(\cB^{0,\f12})}\lesssim &\bigl\|\bigl(Id+((Id+\na \bar
Y+\na\wt{Y})^{-1}-Id)\bigr)\na \vv p\bigr\|_{L^1_t(\cB^{0,\f12})}\\
\lesssim
&\bigl(1+\|\na\bar{Y}\|_{L^\infty_t(\cB^{1,\f12})}+\|\na\wt{Y}\|_{\cB^{1,\f12}}\bigr)\|\na\vv
p\|_{L^1_t(\cB^{0,\f12})}.
\end{split}
\eeno Hence by virtue of \eqref{Lip2}, \eqref{S4eq17a} and Lemma
\ref{S4lem2}, we obtain for $t\leq T^\star$ and $\e\leq \bar{\e},
\d\leq \bar{\d},$ \beno \|f_3\|_{L^1_t(\cB^{0,\f12})}\leq C\Bigl(
\|\pa_3\b Y\|_{L^2_t(\cB^{1,\f12})}^2+\|\b
Y_t\|_{L^2_t(\cB^{1,\f12})}^2\Bigr), \eeno from which,
\eqref{S4eq12} and \eqref{S4eq19},  we deduce that {for} $t\leq
T^\star $ and $\e\leq \bar{\e}, \d\leq \bar{\d},$ \beq\label{Lip4}
\begin{aligned}
\|f\|_{L^1_t(\mathcal{B}^{0,\f12})} \leq &
C\Bigl((\e+\d)\bigl(1+\|\phi\|_{H^3}+\|\phi\|_{L^\infty_\v(H^3_\h)}\bigr)\|\b
Y_t\|_{L^1_t(\mathcal{B}^{2,\f12})} \\
&\qquad\qquad\qquad\qquad+\|\b Y_t\|_{L^2_t(\cB^{1,\f12})}^2
+\|\partial_3\b Y\|_{L^2_t({\cB}^{1,\f12})}^2\Bigr).
\end{aligned}
 \eeq Let us denote
\beq \label{S4eq45} \begin{split} &\e_0\eqdefa\min\bigl(\bar{\e},
\f1{4C}\bigl(1+\|\phi\|_{H^3}+\|\phi\|_{L^\infty_\v(H^3_\h)}\bigr)^{-1}\bigr)\andf\\
&\d_0\eqdefa \min\bigl(\bar{\d},
\f1{4C}\bigl(1+\|\phi\|_{H^3}+\|\phi\|_{L^\infty_\v(H^3_\h)}\bigr)^{-1}\bigr).
\end{split} \eeq Then we deduce from
 \eqref{2.1},
\eqref{Lip3a} and \eqref{Lip4} that for $t\leq T^\star$ and $\e\leq
\e_0, \d\leq \d_0,$ \beq\label{Lip5}
\begin{aligned}
\|\b Y_t&\|_{\wt L^\infty_t(\mathcal{B}^{0,\f12})} +\|\pa_3\b
Y\|_{\wt L^\infty_t(\mathcal{B}^{0,\f12})} +\|\D\b Y\|_{\wt
L^\infty_t(\mathcal{B}^{0,\f12})}+\|\b Y_t\|_{\wt L^2_t(\mathcal{B}^{1,\f12})} \\
& +\|\p_3\b Y\|_{\wt{L}^2_t(\mathcal{B}^{1,\f12})}+\|\b
Y_t\|_{L^1_t(\mathcal{B}^{2,\f12})}+\|\na \vv p\|_{L^1_t(\cB^{0,\f12})} \\
\leq & C\Bigl(\| Y_1\|_{\mathcal{B}^{0,\f12}} +\|\pa_3\b
Y_0\|_{\mathcal{B}^{0,\f12}}+\|\D\b Y_0\|_{\mathcal{B}^{0,\f12}}
+\|\b Y_t\|_{L^2_t(\cB^{1,\f12})}^2 +\|\partial_3\b
Y\|_{L^2_t(\cB^{1,\f12})}^2\Bigr).
\end{aligned}
\eeq Let us denote \beno T^\ast\eqdefa \sup\bigl\{\ t\leq T^\star,\
\ \|\b Y_t\|_{L^2_t(\cB^{1,\f12})} +\|\partial_3\b
Y\|_{L^2_t(\cB^{1,\f12})}\leq 2Cc_0\ \bigr\}. \eeno Then we deduce
from \eqref{Lip0} and \eqref{Lip5} that for $t\leq T^\ast,$ \beno
\|\b Y_t\|_{L^2_t(\cB^{1,\f12})} +\|\partial_3\b
Y\|_{L^2_t(\cB^{1,\f12})}\leq \f{Cc_0}{1-2C^2c_0}\leq \f32Cc_0,
\eeno provided that $c_0\leq \f1{6C^2}.$ This proves that
$T^\ast=T^\star,$ and there holds \beno
\begin{aligned}
\|\b Y_t&\|_{\wt L^\infty_t(\mathcal{B}^{0,\f12})} +\|\pa_3\b
Y\|_{\wt L^\infty_t(\mathcal{B}^{0,\f12})} +\|\D\b Y\|_{\wt
L^\infty_t(\mathcal{B}^{0,\f12})}+\|\b Y_t\|_{\wt L^2_t(\mathcal{B}^{1,\f12})} \\
& +\|\p_3\b Y\|_{\wt{L}^2_t(\mathcal{B}^{1,\f12})}+\|\b
Y_t\|_{L^1_t(\mathcal{B}^{2,\f12})}+\|\na \vv p\|_{L^1_t(\cB^{0,\f12})} \\
\leq & \f{C}{1-2Cc_0}\Bigl(\| Y_1\|_{\mathcal{B}^{0,\f12}}+\|\pa_3\b
Y_0\|_{\mathcal{B}^{0,\f12}}+\|\D\b Y_0\|_{\mathcal{B}^{0,\f12}}
\Bigr)
\end{aligned} \eeno
  for any $t\leq T^\star.$
Then for $c_0\leq \min\bigl(\f1{6C^2},\f1{8C}\bigr),$ by taking
$\d=\min\bigl(\d_0,2Cc_0\bigr),$ for $\d_0$ given by \eqref{S4eq45},
in \eqref{Lip2} shows that $T^\star=T$ and \eqref{Lip1} holds for
any $t\leq T.$ This completes the proof of Proposition \ref{Lip}.
 \end{proof}

Let us now present the proof of Lemma \ref{S4lem2}.

\begin{proof}[Proof of Lemma \ref{S4lem2}]
We first deduce from \eqref{PR} that
\begin{equation}\label{S4eq20}
\begin{aligned}
\|\na \vv p\|_{L^1_t(\cB^{0,\f12})}\lesssim
&\bigl\|\det(\cB^{-1})(\cB\mathcal{A}\mathcal{A}^t\cB^{t}-Id)\na
\vv p\bigr\|_{L^1_t(\cB^{0,\f12})}\\
&+\bigl\|(\det(\cB^{-1})Id-Id)\na \vv p\bigr\|_{L^1_t(\cB^{0,\f12})}\\
& +\left\|\cB\mathcal{A}\dv
\bigl(\det(\cB^{-1})\cB\mathcal{A}\bigl(\pa_{3}\b Y\otimes\pa_{3}\b
Y -\b Y_t\otimes\b Y_t\bigr)\bigr)\right\|_{L^1_t(\cB^{0,\f12})}.
\end{aligned}
\end{equation}
Note that \beq \label{S4eq31}
\begin{split}
\cB\cA\cA^{t}\cB^t-Id=&(\cA\cA^t-Id)+(\cB-Id)+(\cB-Id)^t\\
&+(\cB-Id)(\cA\cA^t-Id)+(\cB-Id)(\cB-Id)^t\\
&+(\cA\cA^t-Id)(\cB-Id)^t+(\cB-Id)(\cA\cA^t-Id)(\cB-Id)^t.
\end{split}
\eeq Let us deal with the typical term above. Indeed it follows
Lemma \ref{L3} that \beq\label{S4eq40}
\begin{split} \|(\cA\cA^T-Id)(\cB-Id)^t\na\vv p\|_{L^1_t(\cB^{0,\f12})}\lesssim &
\|(\cA\cA^t-Id)\|_{L^\infty_t(\cB^{1,\f12})}\|(\cB-Id)^{t}\na\vv
p\|_{L^1_t(\cB^{0,\f12})}\\
 \lesssim & \bigl(
\|\na\bY\|_{L^\infty_t(\cB^{1,\f12})}+\|\na\wt{Y}\|_{\cB^{1,\f12}}\bigr)\|(\cB-Id)^{t}\na\vv
p\|_{L^1_t(\cB^{0,\f12})}.
\end{split}
\eeq And for $\cB_1$ given by \eqref{S4eq16}, one has \beno
\|\cB_1^{t}\na\vv p\|_{L^1_t(\cB^{0,\f12})}\lesssim
\|\cB_1\|_{\cB^{1,\f12}}\|\na\vv p\|_{L^1_t(\cB^{0,\f12})},\eeno and
Lemmas \ref{L3} and \ref{S4lem3} ensure that for $\e\leq\e_2$ \beno
\begin{split}
\|(A_{3,2}\frak{A}_1^{-1})^{t}\na\vv
p\|_{L^1_t(\cB^{0,\f12})}\lesssim
&\bigl(1+\|\frak{A}_1-Id\|_{\cB^{1,\f12}}\bigr)\|A_3^\h\|_{\dB^1_\h}\|(1-\eta(z_3))\na\vv
p\|_{L^1_t(\cB^{0,\f12})}\\
\lesssim &\|\na\vv p\|_{L^1_t(\cB^{0,\f12})}. \end{split} \eeno The
same estimate holds with $A_{3,2}\frak{A}_1^{-1}$ in the above
inequality being replaced by  the other terms in $\cB_2$ given by
\eqref{S4eq16}. This leads to \beno \|\cB_2^{t}\na\vv
p\|_{L^1_t(\cB^{0,\f12})}\lesssim \|\na\vv
p\|_{L^1_t(\cB^{0,\f12})}, \eeno which together with \eqref{Lip2},
\eqref{S4eq17a} and \eqref{S4eq40} ensures that for $\e\leq \e_3$
\beno \|(\cB-Id)^{t}\na\vv p\|_{L^1_t(\cB^{0,\f12})} \lesssim
\|\na\vv p\|_{L^1_t(\cB^{0,\f12})}, \eeno and \beq\label{S4eq41}
\begin{split} \|(\cA\cA^{t}-Id)(\cB-Id)^{t}\na\vv
p\|_{L^1_t(\cB^{0,\f12})}\leq C(\e+\d)
\bigl(1+\|\phi\|_{H^3}+\|\phi\|_{L^\infty_\v(H^3_\h)}\bigr) \|\na\vv
p\|_{L^1_t(\cB^{0,\f12})}.
\end{split}
\eeq Similar estimate holds for the other terms in \eqref{S4eq31}.
 Furthermore, due to the special structure of the matrix $\cB$ given
 by \eqref{S4eq16}, we get, by a similar derivation of
 \eqref{S4eq41}, that
 \beq \label{S4eq42}
\bigl\|\det(\cB^{-1})(\cB\mathcal{A}\mathcal{A}^t\cB^{t}-Id)\na \vv
p\bigr\|_{L^1_t(\cB^{0,\f12})}\leq C
(\e+\d)\bigl(1+\|\phi\|_{H^3}+\|\phi\|_{L^\infty_\v(H^3_\h)}\bigr)\|\na\vv
p\|_{L^1_t(\cB^{0,\f12})}. \eeq The same estimate holds for
$\bigl\|(\det(\cB^{-1})Id-Id)\na \vv
p\bigr\|_{L^1_t(\cB^{0,\f12})}.$

Along the same line, we can show that for $\e\leq \e_3,$ \beno
\begin{split}
\left\|\cB\mathcal{A}\dv
\bigl(\det(\cB^{-1})\cB\mathcal{A}\bigl(\pa_{3}\b Y\otimes\pa_{3}\b
Y\right. -&\left.\b Y_t\otimes\b
Y_t\bigr)\bigr)\right\|_{L^1_t(\cB^{0,\f12})} \leq  C
\Bigl(\|\pa_3\b Y\|_{L^2_t(\cB^{1,\f12})}^2+\|\b
Y_t\|_{L^2_t(\cB^{1,\f12})}^2\Bigr), \end{split} \eeno from which
and \eqref{S4eq42}, we infer
\begin{equation*}
\begin{aligned}
\|\na \vv p\|_{L^1_t(\cB^{0,\f12})} \leq&
C\Bigl((\e+\d)\bigl(1+\|\phi\|_{H^3}+\|\phi\|_{L^\infty_\v(H^3_\h)}\bigr)\|\na
\vv p\|_{L^1_t(\cB^{0,\f12})}+\|\pa_3\b
Y\|_{L^2_t(\cB^{1,\f12})}^2+\|\b
Y_t\|_{L^2_t(\cB^{1,\f12})}^2\Bigr),
\end{aligned}
\end{equation*} which leads to \eqref{Lip3a} by taking $\e\leq \bar{\e}$ and $\d\leq \bar{\d}$ with $\bar{\e}$ and $\bar{\d}$ being given by
\beno \bar{\e}\eqdefa\min(\e_3, \bar{\d})\andf \bar{\d}\eqdefa
\f1{4C}\bigl(1+\|\phi\|_{H^3}+\|\phi\|_{L^\infty_\v(H^3_\h)}\bigr)^{-1}.
\eeno
 This completes the proof of Lemma \ref{S4lem2}.
\end{proof}

\setcounter{equation}{0}
\section{The decay of the solutions to \eqref{1.2c}}

In this section, let us fix $b_0=e_3,$ then the matrix $\cB$ given
by \eqref{1.2fk} equals to $Id.$   Then the System \eqref{1.2} then
becomes \eqref{1.2c}.
 For simplicity, we shall
denote $\na_y$ by $\na$ in this section.

\begin{prop}\label{prop5.1}
{\sl Let $Y$ be a smooth global  solution of \eqref{1.2c}. Let
$$
E_0\eqdefa\|Y_1\|_{H^1}^2 +\|\p_3 Y_0\|_{H^1}^2  +\|\Delta
Y_0\|_{L^2}^2.
$$
 If we
assume that \beq\label{eq1.3}
\begin{aligned}
E_0+\sup_{t\in\R^+}\Bigl(\|\na Y(t)\|_{\dot
B^{\f32}}&+\|Y(t)\|_{\cB^{2,-\f12}_{2,\infty}}
+\|Y(t)\|_{\cB^{4,-\f12}_{2,\infty}}\\
&+\|\pa_{3} Y(t)\|_{\dot H^{\f52}} +\| Y_t(t)\|_{\dot H^{\f52}}
+\|\D Y(t)\|_{\dot H^{\f34}}\Bigr) \le\eta_0,
\end{aligned}
\eeq and  \beq\label{S5eq5}
\lambda(t)\eqdefa\|Y_t(t)\|_{\cB^{0,-\f12}_{2,\infty}}+
\|Y(t)\|_{\cB^{1,-\f12}_{\infty,\infty}}+\|Y(t)\|_{\cB^{\f52,-\f12}_{2,\infty}}\leq
\la_0 \eeq for some $\la_0>0$ and some sufficiently small $\eta_0.$
Then one has \beq\label{eq1.3a} \|Y_t(t)\|_{H^1}^2 +\|\p_3
Y(t)\|_{H^1}^2
 +\|Y(t)\|_{\dot H^2}^2 \lesssim
\f{(\lambda_0+E_0)^{2}E_0}{(\lambda_0+E_0)^{2}+E_0\sqrt{t}}. \eeq
 }
\end{prop}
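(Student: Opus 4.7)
The plan is to adapt the Littlewood--Paley energy argument of Lemma \ref{S3-Lem1} to the unlocalized setting at the $H^1\times\dot H^2$ level, control the nonlinear source using the high-regularity smallness in \eqref{eq1.3}, and then close everything by the anisotropic interpolation inequalities of Lemma \ref{lem1.1}. First I would construct the modified energy
$$
\mathcal{E}(t) \;\sim\; \|Y_t(t)\|_{H^1}^2 + \|\partial_3 Y(t)\|_{H^1}^2 + \|\Delta Y(t)\|_{L^2}^2,
$$
supplemented by the small cross terms $-c_*(Y_t\,|\,\Delta Y) - c_*(\nabla Y_t\,|\,\nabla\Delta Y)$ inspired by \eqref{d4}, so that testing \eqref{1.2c} against $Y_t-\tfrac14\Delta Y$ and its gradient analogue yields
$$
\tfrac{d}{dt}\mathcal{E}(t) + c\,\mathcal{D}(t) \;=\; \bigl(f\,\big|\,Y_t-\tfrac14\Delta Y\bigr) + \bigl(\nabla f\,\big|\,\nabla Y_t-\tfrac14\nabla\Delta Y\bigr),
$$
with the coercive dissipation $\mathcal{D}(t) \sim \|\nabla Y_t\|_{H^1}^2 + \|\partial_3\nabla Y\|_{H^1}^2$.

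The right-hand side is controlled by the product laws of Lemmas \ref{L3} and \ref{PRO} together with Lemma \ref{S3lem1}. Since $b_0=e_3$ forces $\mathcal{B}\equiv\mathrm{Id}$ and $\widetilde Y\equiv 0$, the source $f$ in \eqref{1.2d} is purely quadratic-and-higher in $\nabla Y$ and $Y_t$; expanding $\mathcal{A}-\mathrm{Id}=\sum_{n\geq 1}(-\nabla Y)^n$ and treating the pressure contribution along the lines of Lemma \ref{S4lem2} (in the simpler case $\mathcal{B}=\mathrm{Id}$), every term on the right-hand side carries at least one factor of a small quantity drawn from $\{\eta_0,\sqrt{\mathcal{E}(t)}\}$. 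Taking $\eta_0$ small enough absorbs half of $\mathcal{D}(t)$ into the left-hand side:
$$
\tfrac{d}{dt}\mathcal{E}(t) + \tfrac12\,\mathcal{D}(t) \;\leq\; 0.
$$

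The decisive step is the anisotropic interpolation. Applying parts (1), (2) and (4) of Lemma \ref{lem1.1} to the three $L^2$-pieces of $\mathcal{E}(t)$ gives
$$
\|Y_t\|_{L^2}^2 \lesssim \|Y_t\|_{\cB^{0,-\f12}_{2,\infty}}^{4/3}\|\nabla Y_t\|_{L^2}^{2/3},\qquad \|\partial_3 Y\|_{L^2}^2 \lesssim \|Y\|_{\cB^{1,-\f12}_{\infty,\infty}}^{4/3}\|\nabla\partial_3 Y\|_{L^2}^{2/3},
$$
$$
\|\Delta Y\|_{L^2}^2 \lesssim \|Y\|_{\cB^{\f52,-\f12}_{2,\infty}}^{4/3}\|\nabla\partial_3 Y\|_{L^2}^{2/3}.
$$
The $\dot H^1$ components of $\|Y_t\|_{H^1}^2$ and $\|\partial_3 Y\|_{H^1}^2$ are handled via parts (3)--(4) at the gradient level, with the intermediate anisotropic norms (such as $\|Y_t\|_{\cB^{\f32,-\f12}_{2,\infty}}$) recovered by interpolating between $\lambda(t)\leq\lambda_0$ and the $\eta_0$-bounded high-regularity norms from \eqref{eq1.3} via Lemma \ref{L1}, and by using the bootstrap bound $\mathcal{E}(t)\leq E_0$ for the remaining intermediate quantities. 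Collecting all of these estimates yields
$$
\mathcal{E}(t) \;\leq\; C\,(\lambda_0+E_0)^{4/3}\,\mathcal{D}(t)^{1/3}, \qquad\text{i.e.}\qquad \mathcal{D}(t) \;\geq\; c\,\frac{\mathcal{E}(t)^3}{(\lambda_0+E_0)^4}.
$$
Combining this with $\mathcal{E}'(t)+\tfrac12\mathcal{D}(t)\leq 0$ produces the scalar differential inequality $\mathcal{E}'(t)+c'\,\mathcal{E}(t)^3/(\lambda_0+E_0)^4\leq 0$; integration of $(\mathcal{E}^{-2})'\geq 2c'/(\lambda_0+E_0)^4$ gives $\mathcal{E}(t)^{-2}\geq E_0^{-2}+2c't/(\lambda_0+E_0)^4$, which is equivalent up to absolute constants to \eqref{eq1.3a}.

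The main obstacle I anticipate is the combination of Step 2 and Step 3: the pressure $\nabla\vv p$ in \eqref{1.2d} involves $\nabla\Delta^{-1}\mathrm{div}\,\mathrm{div}$, a composition of Riesz transforms that does not preserve the $\partial_3$-only dissipation structure, so each bilinear piece of $\nabla\vv p$ has to be dissected via Bony's decomposition \eqref{C7} and the derivatives routed so that every surviving factor lands on a quantity already present in $\mathcal{D}(t)$ (rather than one requiring an unavailable horizontal smoothing of $Y$). A secondary difficulty is tracing the exact $(\lambda_0+E_0)^4$ dependence through the $\dot H^1$-level interpolations, since the intermediate anisotropic norms are not directly in $\lambda(t)$ and must be carefully interpolated between $\lambda_0$, $\eta_0$ and $E_0$ in order for the final ODE constants to reproduce \eqref{eq1.3a} sharply.
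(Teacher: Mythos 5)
Your overall plan — build a modified energy at the $H^1\times\dot H^2$ level with cross terms, absorb the nonlinearity using the high‑regularity smallness in \eqref{eq1.3}, then close via the anisotropic interpolations of Lemma \ref{lem1.1} together with $\lambda(t)\le\lambda_0$ and the bootstrap bound $E_0(t)\le E_0$, and finally integrate the resulting scalar ODE — matches the paper's proof of \eqref{eq1.3a} very closely. The ODE you write at the end, and its integration, is exactly what the paper does. But there is a specific technical mismatch in the $H^1$‑level test function that, taken literally, would derail the argument.

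The paper forms the $H^1$ energy by testing the equation with $-\Delta Y_t$ (no cross term at this level), combined with the $L^2$‑level test against $Y_t-\frac14\Delta Y$; see \eqref{eq1.24}. This produces the right‑hand side $(f\,|\,Y_t-\frac14\Delta Y-\Delta Y_t)$ and the dissipation $D_0(t)=\|\nabla Y_t\|_{H^1}^2+\|\partial_3\nabla Y\|_{L^2}^2$. You instead propose testing the gradient of the equation against $\nabla Y_t-\frac14\nabla\Delta Y$ — the stated RHS $(\nabla f\,|\,\nabla Y_t-\frac14\nabla\Delta Y)$ and the claimed dissipation $\|\partial_3\nabla Y\|_{H^1}^2$ confirm this. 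That choice introduces two problems the paper does not face. First, the cross term $-c_*(\nabla Y_t\,|\,\nabla\Delta Y)$ cannot be absorbed into your stated $\mathcal{E}(t)\sim\|Y_t\|_{H^1}^2+\|\partial_3 Y\|_{H^1}^2+\|\Delta Y\|_{L^2}^2$, because a term $\|\nabla\Delta Y\|_{L^2}^2$ would have to appear in $\mathcal{E}$ for the Cauchy–Schwarz absorption to go through, and it is missing. Second, and more seriously, the extra piece $-\frac14(\nabla f\,|\,\nabla\Delta Y)=\frac14(f\,|\,\Delta^2 Y)$ in your RHS requires controlling either $\|\nabla f\|_{L^2}$ (which, via the term $\nabla\!\cdot((\cA\cA^t-Id)\nabla Y_t)$ in \eqref{1.2d}, involves $\|\nabla^3 Y_t\|_{L^2}$ — a norm that is neither part of $D_0$ or your $\mathcal{D}$ nor bounded by $\eta_0$ through \eqref{eq1.3}, since \eqref{eq1.3} only supplies $\|Y_t\|_{\dot H^{5/2}}$ and $\dot H^3$ cannot be interpolated between $\dot H^2$ and $\dot H^{5/2}$) or $\|\Delta^2 Y\|_{L^2}$ (likewise unavailable). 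So the energy identity you write cannot be closed at the $E_0$‑regularity level of Proposition \ref{prop5.1}. Switching the $H^1$‑level test back to $-\Delta Y_t$, as in \eqref{eq1.24}, removes the stray $(f\,|\,\Delta^2 Y)$ term and yields exactly the paper's $E_0(t)$ and $D_0(t)$; the remainder of your sketch (product laws, pressure treatment in the $\cB=\mathrm{Id}$ case, Lemma \ref{lem1.1}, the ODE) then goes through as you describe.
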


Let us remark that the proof of this proposition is motivated by
similar ideas in \cite{GT13,RWXZ}, which are formulated in the
Eulerian coordinates. Moreover, compared with the result in
\cite{RWXZ}, here we work out the limiting decay rate, namely, here
the solution decays like $\w{t}^{-\f14},$ while the solution in
\cite{RWXZ} decays like $\w{t}^{-s}$ for any $s\in ]0,1/4[.$

\begin{proof}
We first get by a similar derivation of \eqref{d5} that \beno
\begin{aligned}
\f{d}{dt}\Bigl(\f{1}{2}\bigl(\|Y_t\|_{L^2}^2 +\|\p_{3} Y\|_{L^2}^2 &
+\f{1}{4}\|\Delta Y\|_{L^2}^2\bigr) -\f{1}{4}(Y_t |\Delta Y)\Bigr)
\\&
\qquad+\f{3}{4}\|\na Y_t\|_{L^2}^2 +\f{1}{4}\|\na\p_{3}
Y\|_{L^2}^2=\bigl(f | (Y_t-\f{1}{4}\Delta Y)\bigr).
\end{aligned}
\eeno While by performing $L^2$ inner product of \eqref{1.2c} with
$-\D Y_t,$ we obtain \beno
\begin{aligned}
\f{1}{2}\f{d}{dt}\bigl(\|\na Y_t(t)\|_{L^2}^2 +\|\na\p_{3}
Y(t)\|_{L^2}^2\bigr) +\|\D Y_t\|_{L^2}^2 =-\bigl(f\ |\Delta
Y_t\bigr).
\end{aligned}
\eeno By summing up  the above two inequalities, we obtain
 \beq\label{eq1.24}
\begin{aligned}
\f{d}{dt}\Bigl(&\f{1}{2}\bigl(\|Y_t\|_{H^1}^2 +\|\p_{3} Y\|_{H^1}^2
 +\f{1}{4}\|\Delta Y\|_{L^2}^2\bigr)-\f{1}{4}(Y_t| \Delta Y)\Bigr)
\\&
\qquad +\f{3}{4}\|\na Y_t\|_{L^2}^2+\|\D Y_t\|_{L^2}^2
+\f{1}{4}\|\na\p_{3} Y\|_{L^2}^2\\
&=\bigl(f |( Y_t-\f{1}{4}\Delta Y-\Delta Y_t)\bigr),
\end{aligned}
\eeq for $f$ given by \eqref{1.2d}. Let us now deal with the last
line of \eqref{eq1.24} term by term.

\no$\bullet$\underline{The estimate of
$\bigl(\na\cdot\bigl((\mathcal{A}\mathcal{A}^{t}-Id)\na Y_t\bigr) |
(Y_t-\f{1}{4}\Delta Y-\Delta Y_t)\bigr)$}

Due to \beq\label{eq1.24a}
\mathcal{A}\mathcal{A}^{t}-Id=(\mathcal{A}-Id)(\mathcal{A}-Id)^t
+\mathcal{A}-Id+(\mathcal{A}-Id)^t \quad\qquad\and\qquad
\mathcal{A}-Id=\sum_{n=1}^\infty(\na Y)^n, \eeq we get, by using the
classical product law: \beq \label{prop5.1eq3} \|ab\|_{\dot
H^s}\lesssim \|a\|_{{\dB}^{\f32}}\|b\|_{\dot
H^s}\quad\mbox{for}\quad |s|<\f32,\eeq and \eqref{eq1.3} that \beno
\begin{split}
\|\mathcal{A}\mathcal{A}^t-Id\|_{\dot B^{\f32}} \leq& C\bigl(1+\|\na
Y\|_{L^\infty_t(\dB^{\f32})}\bigr)\|\na Y\|_{\dB^{\f32}}\\
\leq &C\|\na Y\|_{\dB^{\f32}},  \end{split} \eeno and
\beq\label{prop5.1eq1}
\begin{aligned}
\bigl|&\left(\na\cdot\bigl((\mathcal{A}\mathcal{A}^t-Id)\na
Y_t\bigr)| (Y_t-\Delta Y_t)\right)\bigr| \\
&\lesssim \|(\mathcal{A}\mathcal{A}^t-Id)\na Y_t\|_{L^2}\|\|\na
Y_t\|_{L^2} +\|(\mathcal{A}\mathcal{A}^t-Id)\na Y_t\|_{\dot H^1}
\|\D Y_t\|_{L^2}
\\&
\lesssim \|\mathcal{A}\mathcal{A}^t-Id\|_{\dot B^{\f32}} \bigl(\|\na
Y_t\|_{L^2}^2+\|\D Y_t\|_{L^2}^2\bigr)
\\&
\lesssim \|\na Y\|_{\dot B^{\f32}}\bigl(\|\na Y_t\|_{L^2}^2+\|\D
Y_t\|_{L^2}^2\bigr).
\end{aligned}
\eeq To deal with the term
$\left(\na\cdot\bigl((\mathcal{A}\mathcal{A}^t-Id)\na Y_t\bigr)|
\Delta Y\right),$ we write \beq\label{S5eq9}
\begin{aligned}
-\left(\na\right.&\left.\cdot\bigl((\mathcal{A}\mathcal{A}^{t}-Id)\na
Y_t\bigr) | \Delta Y\right)\\
=& -\f{d}{dt}\left(\na\cdot\bigl((\mathcal{A}\mathcal{A}^t-Id)\na
Y\bigr) | \D
Y\right)+\left(\na\cdot\bigl((\mathcal{A}\mathcal{A}^t-Id)\na
Y\bigr) | \D Y_t\right)
\\&
+\left(\na\cdot\bigl(\bigl(\pa_t((\mathcal{A}-Id)
(\mathcal{A}-Id)^t)+\p_t\mathcal{A}+\p_t\mathcal{A}^t\bigr)\na
Y\bigr) | \D Y\right).
\end{aligned}
\eeq By virtue of \eqref{eq1.3} and  Lemma \ref{S3lem1}, we deduce
$$
\begin{aligned}
\bigl|\left(\na\cdot\bigl((\mathcal{A}\mathcal{A}^t-Id)\na Y\bigr) |
\D Y_t\right)\bigr| &\lesssim \bigl(1+\|\na Y\|_{L^\infty}\bigr)
\|\na Y\|_{L^4}\|\D Y\|_{L^4}\|\D Y_t\|_{L^2}
\\&
\lesssim \|Y\|_{\cB^{2,-\f12}_{2,\infty}}^{\f12}
\|Y\|_{\cB^{4,-\f12}_{2,\infty}}^{\f12}\|\na\pa_{3}Y\|_{L^2}\|\D
Y_t\|_{L^2},
\end{aligned}
$$ and
$$
\begin{aligned}
\bigl|&\left(\na\cdot\bigl(\bigl(\pa_t((\mathcal{A}-Id)
(\mathcal{A}-Id)^t)+\p_t\mathcal{A}+\p_t\mathcal{A}^t\big)\na
Y\bigr)| \D Y\right)\bigr|
\\&
\lesssim \bigl(1+ \|\na Y\|_{L^\infty}\bigr)\bigl(\|\na
Y\|_{L^4}\|\D Y\|_{L^4}\|\D Y_t\|_{L^2} +\|\D Y\|_{L^4}^2\|\na
Y_t\|_{L^2}\bigr)
\\&
\lesssim \Bigl(\|Y\|_{\cB^{2,-\f12}_{2,\infty}}^{\f12}
\|Y\|_{\cB^{4,-\f12}_{2,\infty}}^{\f12}\|\D Y_t\|_{L^2}
+\|Y\|_{\cB^{4,-\f12}_{2,\infty}}\|\na
Y_t\|_{L^2}\Bigr)\|\na\pa_{3}Y\|_{L^2}.
\end{aligned}
$$
Hence, we obtain
 \beq\label{prop5.1eq2}
\begin{aligned}
&-\left(\na\cdot\bigl((\mathcal{A}\mathcal{A}^t-Id)\na Y_t\bigr) |
\Delta Y\right) \lesssim
-\f{d}{dt}\left(\na\cdot\bigl((\mathcal{A}\mathcal{A}^t-Id)\na
Y\bigr) | \D Y\right)
\\&\qquad\qquad\qquad\qquad
+\Bigl(\|Y\|_{\cB^{2,-\f12}_{2,\infty}}
+\|Y\|_{\cB^{4,-\f12}_{2,\infty}}\Bigr)\|\na\pa_{3}Y\|_{L^2}\bigl(\|\na
Y_t\|_{L^2}+\|\D Y_t\|_{L^2}\bigr).
\end{aligned}
\eeq

\no$\bullet$\underline{The estimate of $\bigl(\mathcal{A}^t\na \vv p
|(-Y_t+\f14\D Y+\D Y_t)\bigr)$}

It is easy to observe that
$$
\begin{aligned}
\bigl(\mathcal{A}^t&\na \vv p | (-Y_t+\f14\D Y+\D Y_t)\bigr)\\
=&\bigl(\mathcal{A}^t\na \vv p |(\f14\D Y+\D Y_t)\bigr)
-\bigl(\na\cdot(\mathcal{A}^t\vv p)|Y_t\bigr)
+\bigl(\na\cdot\mathcal{A}^t\vv p| Y_t\bigr)
\\
\lesssim &\bigl(1+\|\na Y\|_{L^\infty}\bigr) \bigl(\|\D
Y\|_{L^2}+\|\D Y_t\|_{L^2}\bigr)\|\na \vv p\|_{L^2}
\\&
+\Bigl(\bigl(1+\|\na Y\|_{L^\infty}\bigr)\|\na Y_t\|_{L^2} +\|\D
Y\|_{L^4}\|Y_t\|_{L^4}\Bigr)\|\vv p\|_{L^2}.
\end{aligned}
$$
On the other hand,  it follows from \eqref{1.2d} that \beno \|\na
\vv p\|_{L^2} \leq C\Bigl(\|\na Y\|_{L^\infty}\|\na \vv
p\|_{L^2}+\bigl\|\cA\bigl(\p_3Y\otimes\p_3Y-Y_t\otimes
Y_t\bigr)\bigr\|_{\dot H^1}\Bigr),\eeno so that as long as $\eta_0$
in \eqref{eq1.3} is sufficiently small, we deduce from the product
law \eqref{prop5.1eq3} that \beno
\begin{split}
\|\na \vv p\|_{L^2} \leq & C\bigl(1+\|\na
Y\|_{L^\infty_t({B}^{\f32})}\bigr)\bigl\|\bigl(\p_3Y\otimes\p_3Y-Y_t\otimes
Y_t\bigr)\bigr\|_{\dot H^1}\\
\leq& C\bigl( \|\pa_{3}Y\|_{\dot H^{\f54}}^2+\| Y_t\|_{\dot
H^{\f54}}^2\bigr)\\
 \leq& C\bigl( \|\na\pa_{3}
Y\|_{L^2}^{\f53}\|\pa_{3} Y\|_{\dot H^{\f52}}^{\f13} +\|\na
Y_t\|_{L^2}^{\f53}\| Y_t\|_{\dot H^{\f52}}^{\f13}\bigr).
\end{split}
\eeno Along the same line, we deduce from \eqref{1.2d} and the law
of product \eqref{prop5.1eq3} that \beno
\begin{split}
\|\vv p\|_{L^2}\leq& C\Bigl(\bigl\|(\cA\cA^t-Id)\na \vv p\|_{\dot
H^{-1}}+\bigl\|\cA\bigl(\p_3Y\otimes\p_3Y-Y_t\otimes
Y_t\bigr)\bigr\|_{L^2}\Bigr)\\
\leq& C\Bigl(\|\na Y\|_{{B}^{\f32}}\|\na \vv p\|_{\dot
H^{-1}}+\bigl(1+\|\na
Y\|_{{B}^{\f32}}\bigr)\bigl\|\bigl(\p_3Y\otimes\p_3Y-Y_t\otimes
Y_t\bigr)\bigr\|_{L^2}\Bigr),
\end{split}
\eeno then under the assumption of \eqref{eq1.3}, we have \beno
\begin{split}
 \|\vv p\|_{L^2} \leq& C\bigl(
\|\pa_{3}Y\|_{L^4}^2+\| Y_t\|_{L^4}^2\bigr)\\
 \leq& C\bigl(
\|\pa_{3}Y\|_{L^2}^{\f12}\|\na\pa_{3}Y\|_{L^2}^{\f32}
+\|Y_t\|_{L^2}^{\f12}\|\na Y_t\|_{L^2}^{\f32}\bigr).
\end{split}
\eeno Therefore,  by applying Lemmas \ref{lem1.1} and \ref{S3lem1},
we arrive at \beq\label{prop5.1eq5}
\begin{aligned}
\bigl|\bigl(\mathcal{A}^{t}&\na \vv p |(-Y_t+\f14\D Y+\D
Y_t)\bigr)\bigr|
\\
\lesssim &
\Bigl(\|Y\|_{\cB^{\f52,-\f12}_{2,\infty}}^{\f23}\|\na\pa_{3}Y\|_{L^2}^{\f13}
+\|\D Y_t\|_{L^2}\Bigr) \Bigl(\|\na\pa_{3} Y\|_{L^2}^{\f53}\|\pa_{3}
Y\|_{\dot H^{\f52}}^{\f13} +\|\na Y_t\|_{L^2}^{\f53}\| Y_t\|_{\dot
H^{\f52}}^{\f13}\Bigr)
\\&
+\Bigl(\|\na Y_t\|_{L^2} +\|\D Y\|_{\dot
H^{\f34}}\|Y_t\|_{L^2}^{\f14}\|\na Y_t\|_{L^2}^{\f34}\Bigr)
\Bigl(\|\pa_{3}Y\|_{L^2}^{\f12}\|\na\pa_{3}Y\|_{L^2}^{\f32}
+\|Y_t\|_{L^2}^{\f12}\|\na Y_t\|_{L^2}^{\f32}\Bigr).
\end{aligned}
\eeq

\no$\bullet$\underline{The closure of the energy estimate}

Let us denote \beq\label{prop5.1eq6}
\begin{aligned}
E_0(t)\eqdefa &\f{1}{2}\Bigl(\|Y_t(t)\|_{H^1}^2 +\|\p_{3}
Y(t)\|_{H^1}^2 +\f{1}{4}\|\Delta Y(t)\|_{L^2}^2\Bigr)
\\&\quad \   -\f{1}{4}(Y_t | \Delta Y)
+\left(\na\cdot\bigl((\mathcal{A}\mathcal{A}^t-Id)\na Y\bigr)
| \D Y\right)\andf\\
D_0(t)\eqdefa& \|\na Y_t(t)\|_{H^1}^2 +\|\p_3\na Y(t)\|_{L^2}^2.
\end{aligned}
\eeq Then by resuming the Estimates \eqref{prop5.1eq1},
\eqref{prop5.1eq2} and \eqref{prop5.1eq5} into \eqref{eq1.24}, we
obtain $$\longformule{\f{d}{dt}E_0(t)+\f14D_0(t) \leq C\Bigl(\|\na
Y\|_{\dot B^{\f32}}+\|Y\|_{\cB^{2,-\f12}_{2,\infty}}
+\|Y\|_{\cB^{4,-\f12}_{2,\infty}}+\|\pa_{3} Y\|_{\dot H^{\f52}}+\|
Y_t\|_{\dot H^{\f52}}  }{{}+\|\pa_{3}Y\|_{L^2}+\|Y_t\|_{L^2}+\|\D
Y\|_{\dot H^{\f34}}\|Y_t\|_{L^2}^{\f14}\|\na
Y_t\|_{L^2}^{\f14}\bigl(\|\pa_{3}Y\|_{L^2}^{\f12}+\|Y_t\|_{L^2}^{\f12}\bigr)\Bigr)D_0(t).
} $$ Thus under the assumption of \eqref{eq1.3} and \beq
\label{S5eq11}
\begin{aligned}
C\sup_{t\in\R^+}\bigl(\|\pa_{3}Y(t)\|_{L^2} +\|Y_t(t)\|_{L^2}\bigr)
\le \f1{16},
\end{aligned}
\eeq  we infer
\begin{equation}\label{MA}
\f{d}{dt}E_0(t)+\f18 D_0(t)\le 0,
\end{equation}
which in particular implies \beq \label{S5eq13} E_0(t)\leq
E_0\quad\mbox{for}\quad t\geq 0. \eeq Note that  $\|\nabla
Y\|_{L^\infty_t(L^\infty)}\leq \eta_0$, we have \beq \label{S5eq13a}
\begin{aligned}
E_0(t)\sim\|Y_t(t)\|_{H^1}^2 +\|\p_3 Y(t)\|_{H^1}^2  +\|\Delta
Y(t)\|_{L^2}^2
\end{aligned}
\eeq Thus if $\eta_0$ in \eqref{eq1.3} is sufficiently small, there
holds \eqref{S5eq11} and \eqref{MA} for all $t\in\R^+.$

On the other hand, it follows from  Lemma \ref{lem1.1} and
\eqref{S5eq5} that
$$
E_0(t) \leq C \bigl(\lambda(t)+\|\na
Y_t(t)\|_{L^2}+\|\na\p_3Y(t)\|_{L^2}\bigr)^{\f43} D_0(t)^{\f13}\leq
C (\lambda_0+E_0)^{\f43} D_0(t)^{\f13}.
$$
Then we deduce from \eqref{MA} that
$$
\f{d}{dt}E_0(t)+c(\lambda_0+E_0)^{-4}E^3_0(t)\le 0,
$$
which together with \eqref{S5eq13a} leads to \eqref{eq1.3a}. This
completes the proof of Proposition \ref{prop5.1}.
\end{proof}

\begin{prop}\label{prop5.2}
{\sl Under the assumptions of Proposition \ref{prop5.1}, if we
assume moreover that \beq\label{prop5.2eq1}
\begin{aligned}
&\sup_{t\in\R^+}\Bigl(\|\na Y(t)\|_{\dot B^{\f52}}+\|\na
Y_t(t)\|_{H^{2}} +\|Y(t)\|_{\cB^{5,-\f12}_{2,\infty}}\Bigr)
\le\eta_0,\andf \|\na
Y\|_{L^\infty_t(\cB^{\f52,-\f12}_{2,\infty})}\leq C,
\end{aligned}
\eeq for some sufficiently small $\eta_0.$ Then one has
\beq\label{prop5.2eq2} \frak{E}_1(t)\eqdefa \|Y_t(t)\|_{H^2}^2
+\|\p_3Y(t)\|_{H^2}^2
 +\|\D Y(t)\|_{H^1}^2 \lesssim
\f{\la_1^{2}E_1}{\la_1^{2}+E_1\sqrt{t}} \eeq with $E_1\eqdefa
\frak{E}_1(0)$ and $\la_1$ being given by \beq\label{prop5.2eq3}
\lambda_1(t)\eqdefa \la_0+E_1+\|\na
Y(t)\|_{\cB^{\f52,-\f12}_{2,\infty}}^2\leq \la_1. \eeq }
\end{prop}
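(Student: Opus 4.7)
\smallskip

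\noindent\textbf{Proof proposal for Proposition \ref{prop5.2}.}

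The plan is to mirror the strategy of Proposition \ref{prop5.1} one derivative higher. Apply $\nabla$ to \eqref{1.2c}, perform the same two energy estimates (the $H^1$ version of the multiplier $Y_t - \tfrac14 \D Y$ and then minus the Laplacian of $Y_t$), and combine them into a single energy-dissipation identity of the form
\begin{equation*}
\frac{d}{dt} E_1(t) + \frac14 D_1(t)
= \bigl(\nabla f \,\big|\, \nabla Y_t - \tfrac14 \nabla\Delta Y - \nabla\Delta Y_t\bigr),
\end{equation*}
where
\begin{equation*}
\begin{aligned}
E_1(t) \eqdef{}&\tfrac12\bigl(\|\nabla Y_t\|_{H^1}^2 + \|\nabla\pa_3 Y\|_{H^1}^2 + \tfrac14\|\nabla\D Y\|_{L^2}^2\bigr)
- \tfrac14(\nabla Y_t\,|\,\nabla\D Y) \\
&+ \bigl(\nabla\cdot((\cA\cA^t-Id)\nabla\nabla Y)\,\big|\,\nabla\D Y\bigr),\\
D_1(t) \eqdef{}&\|\nabla Y_t\|_{H^2}^2 + \|\nabla\pa_3\nabla Y\|_{H^1}^2.
\end{aligned}
\end{equation*}
Under the smallness assumption \eqref{prop5.2eq1}, the quadratic correction terms in $E_1$ are absorbed, so that $E_1(t) \sim \frak{E}_1(t)$.

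Next comes the estimate of the nonlinear right-hand side. For the viscous commutator $\nabla\cdot\bigl((\cA\cA^t-Id)\nabla Y_t\bigr)$, I would reproduce the splitting \eqref{S5eq9}, bounding the terms paired against $\nabla Y_t - \nabla\D Y_t$ directly by the product law \eqref{prop5.1eq3} at the $\dot H^1$ and $\dot H^2$ levels, using $\|\nabla Y\|_{\dot B^{5/2}} \le \eta_0$ from \eqref{prop5.2eq1}; the worst term, paired against $\nabla\D Y$, is again converted into a total time-derivative plus a commutator that involves $\p_t \cA$, estimated using Lemma \ref{S3lem1} exactly as in \eqref{prop5.1eq2} but with one extra horizontal derivative supplied by the factor $\|Y\|_{\cB^{5,-1/2}_{2,\infty}}$. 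For the pressure piece $\cA^t\nabla\vv p$, differentiate \eqref{1.2d} once more and solve for $\nabla\vv p$ in $\dot H^1$ and for $\vv p$ in $\dot H^1$ via the law of product \eqref{prop5.1eq3}; the resulting bound is quadratic in $\|\pa_3 Y\|_{L^\infty_t(H^2)} + \|Y_t\|_{L^\infty_t(H^2)}$ times fractional higher norms controlled by \eqref{eq1.3} and \eqref{prop5.2eq1}. All contributions are bounded by $\eta_0\, D_1(t)$ plus a total derivative, so that after absorption and updating $E_1$ by the total-derivative correction we arrive at
\begin{equation*}
\frac{d}{dt} E_1(t) + \tfrac18 D_1(t) \le 0.
\end{equation*}

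Finally, I close the argument by interpolation. Applying part (4) of Lemma \ref{lem1.1} with $k=2$ gives $\|\nabla^2 g\|_{L^2} \lesssim \|g\|_{\cB^{5/2,-1/2}_{2,\infty}}^{2/3}\|\nabla\pa_3 g\|_{L^2}^{1/3}$, and parts (1)--(2) give the analogous $H^0$ and $H^1$ bounds. Applied to $Y_t$, $\pa_3 Y$ and $\nabla Y$, these yield
\begin{equation*}
E_1(t) \lesssim \bigl(\la_0 + E_1 + \|\nabla Y(t)\|_{\cB^{5/2,-1/2}_{2,\infty}}^{2}\bigr)^{4/3}\, D_1(t)^{1/3}
\lesssim \la_1^{4/3}\, D_1(t)^{1/3},
\end{equation*}
the extra quadratic term in $\nabla Y$ arising from the $\dot H^2$ norm of $Y$ in $E_1$. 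Combined with the energy inequality, this gives $\frac{d}{dt}E_1 + c\,\la_1^{-4} E_1^{3} \le 0$, whose explicit integration produces \eqref{prop5.2eq2}.

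The main technical obstacle is the pressure estimate at the $\dot H^1$ level: one must differentiate the implicit identity in \eqref{1.2d} and solve for $\nabla\vv p$ in $\dot H^1$ while keeping the smallness factor $\|\nabla Y\|_{\dot B^{5/2}}$ in front of the highest-order term and controlling the quadratic part $\p_3 Y \otimes \p_3 Y - Y_t \otimes Y_t$ in $\dot H^2$; this forces the appearance of the mixed norm $\|\nabla Y\|_{\cB^{5/2,-1/2}_{2,\infty}}^{2}$ in the definition of $\la_1$, and dictates why \eqref{prop5.2eq1} is formulated exactly as it is.
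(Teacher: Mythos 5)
Your proposal follows the same route as the paper's proof: differentiate \eqref{1.2c} once, reuse the two multipliers of Proposition \ref{prop5.1} at that level, convert the troublesome viscous pairing against $\nabla\Delta Y$ into a total time-derivative plus controllable commutators (analogue of \eqref{S5eq9}), estimate $\nabla\vv p$ in $\dot H^1$ from \eqref{1.2d} via the product law \eqref{prop5.1eq3}, then interpolate the energy against the dissipation using Lemma \ref{lem1.1}(4) with $k=2$ applied to $\nabla Y$ and integrate the resulting ODE. This is exactly the chain \eqref{S5eq18}--\eqref{S5eq26} in the paper, and the role of $\|\nabla Y\|_{\cB^{5/2,-1/2}_{2,\infty}}$ in $\lambda_1$ is identified correctly.

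There are two slips in the bookkeeping of the energy functionals. First, your $E_1$ keeps only the differentiated pieces, so $E_1(t)\sim\|\nabla Y_t\|_{H^1}^2+\|\nabla\pa_3 Y\|_{H^1}^2+\|\nabla Y\|_{\dot H^2}^2$, and the claimed equivalence $E_1(t)\sim\frak{E}_1(t)$ is false: the zeroth-order pieces $\|Y_t\|_{L^2}^2$, $\|\pa_3 Y\|_{L^2}^2$, $\|\Delta Y\|_{L^2}^2$ of $\frak{E}_1$ are missing and cannot be recovered from the $\nabla$-level estimate. The paper repairs this by setting $E_1:=E_0+\dot E_1$, $D_1:=D_0+\dot D_1$ and summing \eqref{MA} of Proposition \ref{prop5.1} with the $\pa_k$-differentiated inequality \eqref{S5eq25}; your write-up needs this step explicitly. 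Second, your dissipation $D_1=\|\nabla Y_t\|_{H^2}^2+\|\nabla\pa_3\nabla Y\|_{H^1}^2$ over-claims: the identity only produces $\|\nabla^2\pa_3 Y\|_{L^2}^2$ (plus $\|\nabla\pa_3 Y\|_{L^2}^2$ once the level-zero estimate is added), so the top term $\|\pa_3 Y\|_{\dot H^3}^2$ hidden in your $\|\nabla\pa_3\nabla Y\|_{H^1}^2$ is not available; the vertical block should be $\|\nabla\pa_3 Y\|_{H^1}^2$. With these two corrections, the interpolation $E_1(t)\lesssim\lambda_1^{4/3}D_1(t)^{1/3}$ and the ODE closure proceed exactly as in the paper.
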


\begin{proof} We first get, by taking $\p_k$ to the System
\eqref{1.2c} and then taking the $L^2$ inner product of the
resulting equation with $\p_{k}Y_t-\f{1}{4}\Delta \p_{k}Y-\Delta
\p_{k}Y_t,$ that \beq\label{S5eq18}
\begin{aligned}
\f{d}{dt}\Bigl(\f{1}{2}&\bigl(\|\p_{k}Y_t\|_{H^1}^2 +\|\p_{k}\p_3
Y\|_{H^1}^2  +\f{1}{4}\|\p_{k} Y\|_{\dot H^2}^2\bigr)
-\f{1}{4}(\p_{k}Y_t | \Delta\p_{k} Y)\Bigr)\\
&\quad+\f{3}{4}\|\na\p_{k} Y_t\|_{L^2}^2+\|\D\p_{k} Y_t\|_{L^2}^2
+\f{1}{4}\|\na \p_k\p_{3} Y\|_{L^2}^2\\
&=\bigl(\p_{k}f |  (\p_{k}Y_t-\f{1}{4}\Delta \p_{k}Y-\Delta\p_{k}
Y_t)\bigr)\quad\mbox{for}\ \ k=1,2,3.
\end{aligned}
\eeq We now deal with the last line of \eqref{S5eq18} term by term.
It follows from the classical product law, \eqref{prop5.1eq3}, that
\beq \label{S5eq19}
\begin{aligned}
&\left|\left(\na\cdot\p_{k}\bigl((\mathcal{A}\mathcal{A}^t-Id)\na
Y_t\bigr)  | (\p_{k}Y_t-\Delta\p_{k} Y_t)\right)\right|
\\
\quad &\lesssim \|\p_k((\mathcal{A}\mathcal{A}^t-Id)\na
Y_t)\|_{\dot{H}^1}\bigl(\|\p_{k} Y_t\|_{L^2} +\|\D\p_k
Y_t\|_{L^2}\bigr)\\
\quad &\lesssim \bigl(\|\na Y\|_{\dB^{\f32}}\|\D\p_k
Y_t\|_{L^2}+\|\na\p_k Y\|_{\dB^{\f32}}\|\D
Y_t\|_{L^2}\bigr)\bigl(\|\p_{k} Y_t\|_{L^2} +\|\D\p_k
Y_t\|_{L^2}\bigr)\\
\quad &\lesssim \bigl(\|\na Y\|_{\dB^{\f32}}+\|\na\p_k
Y\|_{\dB^{\f32}}\bigr)\bigl(\|\p_{k} Y_t\|_{L^2}^2+\|\D
Y_t\|_{L^2}^2 +\|\D\p_k Y_t\|_{L^2}^2\bigr).
\end{aligned}
\eeq Similar to \eqref{S5eq9}, one has
$$
\begin{aligned}
-&\left(\na\cdot\pa_{k}\bigl((\mathcal{A}\mathcal{A}^{t}-Id)\na
Y_t\bigr)  | \Delta\pa_{k} Y\right)\\
=&
-\f{d}{dt}\left(\na\cdot\pa_{k}\bigl((\mathcal{A}\mathcal{A}^{t}-Id)\na
Y\bigr) | \D \pa_{k}
Y\right)+\left(\na\cdot\pa_{k}\bigl((\mathcal{A}\mathcal{A}^t-Id)\na
Y\bigr) | \D\pa_{k} Y_t\right)
\\&
+\left(\na\cdot\pa_{k}\bigl(\bigl(\pa_t((\mathcal{A}-Id)
(\mathcal{A}-Id)^{t})+\p_t\mathcal{A}+\p_t\mathcal{A}^t\bigr)\na
Y\bigr) | \D\pa_{k} Y\right).
\end{aligned}
$$
It follows from Lemma \ref{S3lem1} that \beno
\begin{split}
&\left|\left(\na\cdot\pa_{k}\bigl((\mathcal{A}\mathcal{A}^{t}-Id)\na
Y\bigr) | \D\pa_{k} Y_t\right)\right| \\
&\qquad\lesssim \|\p_k\bigl((\mathcal{A}\mathcal{A}^{t}-Id)\na
Y\bigr)\|_{\dot H^1}\|\D\p_kY_t\|_{L^2}\\
&\qquad\lesssim (1+\|\na Y\|_{L^\infty})\bigl(\|\na
Y\|_{L^4}\|\na^2\p_kY\|_{L^4}+\|\na^2Y\|_{L^4}^2\bigr)\|\D\p_kY_t\|_{L^2}\\
&\qquad\lesssim \bigl(\| Y\|_{\cB^{2,-\f12}_{2,\infty}}+\|
Y\|_{\cB^{5,-\f12}_{2,\infty}}\bigr)\bigl(\|\na\p_3Y\|_{H^1}^2+\|\D\p_kY_t\|_{L^2}^2\bigr),
\end{split}
\eeno and  \beno
\begin{split}
&\left|\left(\na\cdot\pa_{k}\bigl(\bigl(\pa_t((\mathcal{A}-Id)
(\mathcal{A}-Id)^{t})+\p_t\mathcal{A}+\p_t\mathcal{A}^{t}\bigr)\na
Y\bigr) | \D\pa_{k} Y\right)\right|\\
&\ \lesssim(1+\|\na Y\|_{L^\infty})\bigl(\|\na
Y\|_{L^4}\|\D\p_kY_t\|_{L^2}+\|\na^2Y\|_{L^4}\|\D Y_t\|_{L^2}+\|\na
Y_t\|_{L^2}\|\D\p_kY\|_{L^4}\bigr)\|\D\p_kY\|_{L^4}\\
&\ \lesssim \bigl(\| Y\|_{\cB^{2,-\f12}_{2,\infty}}+\|
Y\|_{\cB^{5,-\f12}_{2,\infty}}\bigr)\bigl(\|\na\p_3Y\|_{H^1}^2+\|\na
Y_t\|_{H^2}^2\bigr).
\end{split}
\eeno This gives \beq \label{S5eq21}
\begin{split}
-&\left(\na\cdot\pa_{k}\bigl((\mathcal{A}\mathcal{A}^{t}-Id)\na
Y_t\bigr)  | \Delta\pa_{k} Y\right)\\ &\qquad=
-\f{d}{dt}\left(\na\cdot\pa_{k}\bigl((\mathcal{A}\mathcal{A}^t-Id)\na
Y\bigr) | \D \pa_{k} Y\right)\\
&\quad\qquad+\bigl(\| Y\|_{\cB^{2,-\f12}_{2,\infty}}+\|
Y\|_{\cB^{5,-\f12}_{2,\infty}}\bigr)\bigl(\|\na\p_3Y\|_{H^1}^2+\|\na
Y_t\|_{H^2}^2\bigr).
\end{split}
\eeq While we deduce from the classical product law,
\eqref{prop5.1eq3}, that \beno
\begin{split}
\bigl|\bigl(\pa_k(&\mathcal{A}^t\na\vv p) |
(-\pa_kY_t+\f14\D\pa_kY+\D\pa_k Y_t)\bigr)\bigr|\\
\lesssim &\|\cA^t\na\vv p\|_{\dot H^1}\bigl(\|\na
Y_t\|_{H^2}+\|\D\p_kY\|_{L^2}\bigr)\\
\lesssim& (1+\|\na Y\|_{\dB^{\f32}})\|\na\vv p\|_{\dot
H^1}\bigl(\|\na Y_t\|_{H^2}+\|\D\p_kY\|_{L^2}\bigr).
\end{split}
\eeno On the other hand, we infer from \eqref{1.2d} and the law of
product, \eqref{prop5.1eq3}, that \beno
\begin{split}
\|\na\vv p\|_{\dot H^1}\lesssim &\|(\cA\cA^t-Id)\na\vv p\|_{\dot
H^1}+\|\cA\dive\bigl(\cA(\p_3Y\otimes\p_3Y-Y_t\otimes
Y_t)\bigr)\|_{\dot H^1}\\
\lesssim &\|\na Y\|_{\dB^{\f32}}\|\na\vv p\|_{\dot H^1}+(1+\|\na
Y\|_{\dB^{\f32}})\|\cA(\p_3Y\otimes\p_3Y-Y_t\otimes Y_t)\|_{\dot
H^2},
\end{split}
\eeno which together with, \eqref{eq1.3}, \eqref{prop5.2eq1} and the
interpolation inequality: $\|a\|_{\dB^{\f32}}\lesssim \|a\|_{\dot
H^1}^{\f12} \|a\|_{\dot H^2}^{\f12},$ ensures that\beq\label{S5eq22}
\begin{split}
\|\na\vv p\|_{\dot H^1}\lesssim &\bigl(1+\|\na
Y\|_{\dB^{\f32}}+\|\na
Y\|_{\dB^{\f52}}\bigr)\bigl(\|\p_3Y\|_{\dB^{\f32}}\|\na\p_3Y\|_{H^1}+\|Y_t\|_{\dB^{\f32}}\|\na
Y_t\|_{H^1}\bigr)\\
\lesssim &\|\na \p_3Y\|_{H^1}^2+\|\na Y_t\|_{H^1}^2.
\end{split}
\eeq Hence under the assumptions of \eqref{eq1.3} and
\eqref{prop5.2eq1}, we obtain \beq\label{S5eq23}
\bigl|\bigl(\pa_k(\mathcal{A}^t\na\vv p)|
(-\pa_kY_t+\f14\D\pa_kY+\D\pa_k Y_t)\bigr)\bigr|\lesssim \bigl(\|\na
Y\|_{\dot H^2}+\|\na Y_t\|_{H^2}\bigr)\bigl(\|\na
\p_3Y\|_{H^1}^2+\|\na Y_t\|_{H^2}^2\bigr). \eeq

Let us now denote \beq \label{S5eq24} \begin{split} \dot
{{E}}_1(t)\eqdefa &\f12\bigl(\|\na
Y_t(t)\|_{H^1}^2+\|\na\p_3Y(t)\|_{H^1}^2+\f14\|\na
Y(t)\|_{\dot H^2}^2\bigr)\\
&-\f14(\na Y_t\ |\ \na\D
Y)+\sum_{k=1}^3\left(\na\cdot\pa_{k}\bigl((\mathcal{A}\mathcal{A}^T-Id)\na
Y\bigr) | \D \pa_{k} Y\right),\\
\dot D_1(t)\eqdefa &\|\D Y_t(t)\|_{H^1}^2+\|\p_3 Y(t)\|_{\dot
H^2}^2,\andf\\
 E_1(t)\eqdefa
& E_0(t)+\dot{{E}}_1(t),\quad  D_1(t)\eqdefa D_0(t)+  \dot D_1(t).
\end{split}
\eeq
 Note that \beno \sum_{k=1}^3\bigl|\left(\na\cdot\pa_{k}\bigl((\mathcal{A}\mathcal{A}^T-Id)\na
Y\bigr) | \D \pa_{k} Y\right)\bigr| \lesssim \|\na
Y\|_{\dB^{\f32}}\|\na Y\|_{\dot H^2}^2,\eeno so that there holds
\beq \label{S6eq26b} \begin{split}
 &\dot{{E}}_1(t)\sim \|\na
Y_t(t)\|_{H^1}^2+\|\na\p_3Y(t)\|_{H^1}^2+\|\na Y(t)\|_{\dot
H^2}^2,\\
&{E}_1(t)\sim  \|Y_t(t)\|_{H^2}^2+\|\p_3Y(t)\|_{H^2}^2+\|\D Y(t)\|_{
H^1}^2.  \end{split} \eeq

Then by resuming the inequalities \eqref{S5eq19}, \eqref{S5eq21} and
\eqref{S5eq23} into \eqref{S5eq18}, we obtain \beq \label{S5eq25}
\begin{split} \f{d}{dt}\dot{{E}}_1(t)+\f14\dot D_1(t) \leq C\Bigl(&\|\na
Y\|_{\dB^{\f32}}+\|\na Y\|_{\dB^{\f52}}+\|\na Y_t\|_{H^2}+\|
Y\|_{\cB^{2,-\f12}_{2,\infty}}+\|
Y\|_{\cB^{5,-\f12}_{2,\infty}}\Bigr)D_1(t).
\end{split}
\eeq By summing up \eqref{MA} with \eqref{S5eq25} and using the
smallness assumptions \eqref{eq1.3} and \eqref{prop5.2eq1} leads to
\beq \label{S5eq26} \f{d}{dt}{E}_1(t)+ \f1{16}D_1(t) \leq 0, \eeq
which in particular implies \beq E_1(t)+\f1{16}\int_s^tD_1(t')dt'
\leq E_1(s)\quad \forall s\in [0,t[. \label{S6eq26a} \eeq In
particular \eqref{S6eq26b} and \eqref{S6eq26a} ensures that \beq
\label{S6eq26er} \begin{split}
\|Y_t(t)\|_{H^2}^2+\|\p_3Y(t)\|_{H^2}^2+\|\D Y(t)\|_{
H^1}^2+\int_0^t\bigl(\|\na
Y_t(t')\|_{H^2}^2+\|\na\p_3Y(t)\|_{H^1}^2\bigr)dt' \leq CE_1.
\end{split} \eeq
Then we deduce from Lemma \ref{lem1.1} and \eqref{S6eq26b} that
\beno
\begin{split} E_1(t)\leq & C\bigl(\la_0+E_0+\|\na
Y_t(t)\|_{H^1}^2+\|\na\p_3Y(t)\|_{H^1}^2+\|\na
Y(t)\|_{\cB^{\f52,-\f12}_{2,\infty}}^2\bigr)^{\f23}D_1^{\f13}(t)\\
\leq & C(\la_0+E_1+\|\na
Y(t)\|_{\cB^{\f52,-\f12}_{2,\infty}}^2\bigr)^{\f23}D_1^{\f13}(t)\\
\leq &C\la_1^{\f43}D_1^{\f13}(t), \end{split} \eeno which together
with \eqref{S5eq26} ensures that
$$
\f{d}{dt}{E}_1(t)+c\lambda_1^{-4}{E}^3_1(t)\le 0,
$$
which leads to \eqref{prop5.2eq2}. This completes the proof of
Proposition \ref{prop5.2}.
\end{proof}

\begin{prop}\label{prop5.3}
{\sl Under the assumptions of Proposition \ref{prop5.2}, one has
\beq\label{S5eq26c} \|\p_3Y_t(t)\|_{H^1}^2 +\|\p_3^2Y(t)\|_{H^1}^2
 +\|\p_3Y(t)\|_{\dot H^2}^2 \lesssim
\w{t}^{-\f34}. \eeq  }
\end{prop}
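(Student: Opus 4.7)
I will follow the same energy--plus--interpolation scheme used in Propositions \ref{prop5.1} and \ref{prop5.2}, but pushing everything up by one vertical derivative. Apply $\p_3$ to \eqref{1.2c} and test the resulting equation against $\p_3 Y_t-\f14\D \p_3Y-\D\p_3 Y_t$ (the $k=3$ instance of the multiplier in \eqref{S5eq18}). After the same symmetrizations as in Proposition \ref{prop5.2}, this produces the Lyapunov identity $\f{d}{dt}\dot{E}_2(t)+\f14\dot{D}_2(t)=\bigl(\p_3 f\,\big|\,\p_3 Y_t-\f14\D\p_3 Y-\D\p_3 Y_t\bigr)$, where $\dot{E}_2(t)\sim \|\p_3 Y_t\|_{H^1}^2+\|\p_3^2 Y\|_{H^1}^2+\|\p_3 Y\|_{\dot H^2}^2$ and $\dot{D}_2(t)\sim \|\na\p_3 Y_t\|_{H^1}^2+\|\na\p_3^2 Y\|_{L^2}^2$. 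The nonlinear terms from $\p_3 f$ will be controlled exactly as in the proof of Proposition \ref{prop5.2}: the decomposition \eqref{eq1.24a}, the product law \eqref{prop5.1eq3}, the pressure identity \eqref{S5eq22} applied with one extra derivative, and the smallness conditions \eqref{eq1.3} and \eqref{prop5.2eq1} together let me absorb them into $\dot D_2$, yielding $\f{d}{dt}\dot{E}_2 + c\,\dot{D}_2 \le 0$.

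Next I interpolate $\dot{E}_2$ against $E_1$ (which, by Proposition \ref{prop5.2}, already decays as $E_1(t)\lesssim \w{t}^{-1/2}$) and $\dot{D}_2$. The key observation is the elementary Cauchy--Schwarz inequality $\|\p_3 g\|_{L^2}^2 \le \|g\|_{L^2}\,\|\p_3^2 g\|_{L^2}$; applied to $g = Y_t$ and $g=\na Y_t$ it gives $\|\p_3 Y_t\|_{H^1}^2 \lesssim \sqrt{E_1(t)\,\dot{D}_2(t)}$, and applied to $g=\p_3 Y$ it gives $\|\p_3^2 Y\|_{L^2}^2\lesssim \sqrt{E_1(t)\,\dot{D}_2(t)}$. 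For the anisotropic third--order piece, the frequency factorization $|\xi|^4|\xi_3|^2 = |\xi|^3 \cdot |\xi|\,|\xi_3|^2$ and Cauchy--Schwarz in Fourier yield $\|\p_3 Y\|_{\dot H^2}^2 = \|\na^2\p_3 Y\|_{L^2}^2 \le \|\na^3 Y\|_{L^2}\,\|\na\p_3^2 Y\|_{L^2}\lesssim \sqrt{E_1(t)\,\dot{D}_2(t)}$, since $\|\na^3 Y\|_{L^2}^2 \sim \|\na Y\|_{\dot H^2}^2 \le \dot{E}_1 \le E_1$. The remaining piece $\|\na\p_3^2 Y\|_{L^2}^2$ of $\dot E_2$ is already in $\dot D_2$.

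Combining these bounds gives $\dot{E}_2(t) \le C\sqrt{E_1(t)\,\dot{D}_2(t)} + C\dot{D}_2(t)$, hence $\dot{D}_2(t) \gtrsim \dot{E}_2(t)^2/E_1(t)$ in the regime $\dot D_2\le E_1$ relevant for large times (in the complementary regime $\dot E_2\le C\dot D_2$ yields exponential decay). The Lyapunov inequality therefore becomes $\f{d}{dt}\dot{E}_2(t) + c\,\dot{E}_2(t)^2/E_1(t)\le 0$, and inserting $E_1(t)\lesssim \w{t}^{-1/2}$ leads to $(1/\dot{E}_2)'\gtrsim \w{t}^{1/2}$. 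Integrating from $0$ to $t$ gives $\dot{E}_2(t)\lesssim \w{t}^{-3/2}$, which a fortiori yields \eqref{S5eq26c}.

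The main obstacle I expect is the careful bookkeeping of the pressure part of $\p_3 f$: unlike in Proposition \ref{prop5.2}, here I need a $\p_3$--differentiated analogue of \eqref{S5eq22}, and the law of product must be applied so that the top--regularity factor lands on the bounded quantity $\|\na Y\|_{\cB^{5/2,-1/2}_{2,\infty}}$ from \eqref{prop5.2eq1}, while the small factors absorb into the coefficient of $\dot D_2$. Once this is done the three bounds above plug in, and the decay follows mechanically from the ODE argument.
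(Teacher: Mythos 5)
Your plan to differentiate once more in $\partial_3$, symmetrize, and interpolate is the right general strategy, but the first display you write down is not available, and that is a genuine gap, not a bookkeeping issue.

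You claim that the nonlinear terms in $\partial_3 f$ can be absorbed into $\dot D_2$ alone to produce a clean Lyapunov inequality $\f{d}{dt}\dot E_2 + c\,\dot D_2\le 0$. They cannot. Look at what the paper's estimate~\eqref{S5eq19} gives for $k=3$: the right-hand side is a small prefactor times $\|\p_3 Y_t\|_{L^2}^2+\|\D Y_t\|_{L^2}^2+\|\D\p_3 Y_t\|_{L^2}^2$. Only the last of these three is controlled by $\dot D_2=\|\na\p_3 Y_t\|_{H^1}^2+\|\na\p_3^2 Y\|_{L^2}^2$; the terms $\|\p_3 Y_t\|_{L^2}^2$ and $\|\D Y_t\|_{L^2}^2$ live in $D_0\subset D_1$, not in $\dot D_2$. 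Similarly, the pressure estimate~\eqref{S5eq23} produces a small multiple of $\|\na Y_t\|_{H^2}^2+\|\na\p_3 Y\|_{H^1}^2$, again in $D_1$ but not in $\dot D_2$; and~\eqref{S5eq28} produces $\|\p_3 Y\|_{\dot H^2}^2$, which is in $\dot D_1$ but not in $\dot D_2$, since $|\xi|^2|\xi_3|$ is not dominated by $|\xi|\,|\xi_3|^2$ on horizontal frequencies. The honest inequality at this level is therefore the one in the paper,
\[
\f{d}{dt}\dot E_1^3(t)+\f18\dot D_1^3(t)\le C\eta_0\, D_1(t),
\]
with a source that does not close on $\dot D_1^3$ alone. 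Your subsequent ODE for $\dot E_2$ simply never materializes. If instead you try to fix this by folding $E_1$ into your energy so the right-hand side closes, then your combined energy is dominated by the slowly-decaying $E_1\lesssim\w{t}^{-1/2}$ and you cannot see any improved rate for the $\partial_3$-differentiated part.

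The device the paper uses to bypass the non-closing source is a time-weighted estimate: multiply by the weight $(t-s)$, use $\dot E_1^3\lesssim D_1$ to control the weight's time derivative, integrate over $[t/2,t]$, and invoke the already-established bounds $\int_{t/2}^t D_1\lesssim E_1(t/2)\lesssim\w{t}^{-1/2}$ together with the smallness factors $\|\na Y\|_{\dot H^1}+\|\na Y\|_{\dot H^2}+\|\na Y_t\|_{H^1}\lesssim\w{t'}^{-1/4}$ on $[t/2,t]$. This yields $t\,\dot E_1^3(t)\lesssim\w{t}^{1/4}$ and hence the stated rate $\w{t}^{-3/4}$. Note that the rate you arrive at, $\w{t}^{-3/2}$, is strictly faster than what the paper proves; that discrepancy should itself have been a red flag. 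Your interpolation inequalities $\|\p_3 g\|_{L^2}^2\le\|g\|_{L^2}\|\p_3^2 g\|_{L^2}$ and the Fourier--Cauchy--Schwarz factorization are fine as harmonic-analysis facts, but they sit on top of a differential inequality that does not hold.
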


\begin{proof} It follows from the classical product law, \eqref{prop5.1eq3}, that
\beq \label{S5eq28}
\begin{split}
&\left|\left(\na\cdot\pa_{3}\bigl((\mathcal{A}\mathcal{A}^T-Id)\na
Y_t\bigr) \ |\ \Delta\pa_{3} Y\right)\right|\\
&\qquad\qquad\leq \|\pa_{3}\bigl((\mathcal{A}\mathcal{A}^T-Id)\na
Y_t\bigr)\|_{\dot
H^1} \|\pa_{3} Y\|_{\dot H^2}\\
 &\qquad\qquad\lesssim \bigl(\|\na Y\|_{\dB^{\f32}}\|\p_3 Y_t\|_{\dot H^2}+\|\na
Y_t\|_{\dB^{\f32}}\|\p_3 Y\|_{\dot H^2}\bigr)\|\pa_{3} Y\|_{\dot
H^2}.
\end{split}
\eeq We denote
 \beno \begin{split} \dot E_1^3(t)\eqdefa
&\f12\bigl(\|\p_3 Y_t(t)\|_{H^1}^2+\|\p_3^2Y(t)\|_{H^1}^2+\f14\|\p_3
Y(t)\|_{\dot H^2}^2\bigr)-\f14(\p_3 Y_t\ |\ \p_3\D
Y),\andf\\
\dot D_1^3(t)\eqdefa &\|\na\p_3 Y_t(t)\|_{H^1}^2+\|\p_3^2
Y(t)\|_{\dot H^1}^2.
\end{split}
\eeno Then resuming the Inequalities \eqref{S5eq19}, \eqref{S5eq23}
and \eqref{S5eq28} into \eqref{S5eq18} for $k=3$ gives rise to \beno
\begin{split}
\f{d}{dt}\dot E_1^3(t)+\f14\dot D_1^3(t)\leq C\bigl(\|\na
Y\|_{\dB^{\f32}}+&\|\na Y\|_{\dot H^2}+\|\na
Y_t\|_{H^2}\bigr)\\
&\qquad\times \bigl(\|\p_3Y_t\|_{H^2}^2+\|\na
Y_t\|_{H^2}^2+\|\na\p_3Y\|_{H^1}^2\bigr),
\end{split}
\eeno from which and the smallness condition and \eqref{eq1.3},
\eqref{prop5.2eq1}, we infer \beno
\begin{split}
\f{d}{dt}&\dot E_1^3(t)+\f18\dot D_1^3(t)\\
\leq &C\bigl(\|\na
Y\|_{\dB^{\f32}}+\|\na Y\|_{\dot H^2}+\|\na
Y_t\|_{H^2}\bigr)\bigl(\|\p_3Y_t\|_{L^2}^2+\|\na
Y_t\|_{H^2}^2+\|\na\p_3Y\|_{H^1}^2\bigr)\\
\leq &C\bigl(\|\na Y\|_{\dot B^{\f32}}+\|\na Y\|_{\dot
B^{\f52}}+\|\na Y_t\|_{H^2}\bigr) D_1(t)\leq C\eta_0  D_1(t).
\end{split}
\eeno which together with \eqref{S6eq26a} yields
\beq\label{S5eq29}
\dot E_1^3(t)+\int_0^t\dot D_1^3(t')\leq \dot
E_1^3(0)+C\eta_0\int_0^tD_1(t')\,dt'\leq C\bigl(\dot
E_1^3(0)+E_1\bigr). \eeq Moreover, note that $\dot E_1^3(t)\lesssim
D_1(t),$  for any $0<s\leq t,$ we have \beno \begin{split}
\f{d}{dt}\bigl((t-s)\dot E_1^3(t)\bigr)+(t-s)\dot D_1^3(t)\leq &\dot
E_1^3(t)+ C (t-s)\bigl(\|\na Y\|_{\dot H^1}+\|\na Y\|_{\dot
H^2}+\|\na Y_t\|_{H^1}\bigr) D_1(t)\\
\leq &C\Bigl(1+(t-s)\bigl(\|\na Y\|_{\dot H^1}+\|\na Y\|_{\dot
H^2}+\|\na Y_t\|_{H^1}\bigr)\Bigr) D_1(t). \end{split} \eeno Then in
view of \eqref{eq1.3a}, \eqref{prop5.2eq2} and \eqref{S6eq26a}, we
get, by integrating the above inequality over $[s,t]$ and then
taking $s=\f{t}2,$ that \beno
\begin{split}
t\dot E_1^3(t)\leq
&C\left(\int_{\f{t}2}^tD_1(t')\,dt'+t\int_{\f{t}2}^t\bigl(\|\na
Y(t')\|_{\dot H^1}+\|\na Y(t')\|_{\dot H^2}+\|\na
Y_t(t')\|_{H^1}\bigr)D_1(t')\,dt'\right)\\
\leq
&C\left(\dot{{E}}_1(t/2)+\w{t}^{\f34}\int_{\f{t}2}^tD_1(t')\,dt'\right)\\
\leq &C\w{t}^{\f34}\dot{{E}}_1(t/2)\leq C\w{t}^{\f14},
\end{split}
\eeno which together with \eqref{S5eq29} leads to \eqref{S5eq26c}.
This completes the proof of Proposition \ref{prop5.3}.
\end{proof}

\setcounter{equation}{0}
\section{Propagation of regularities in the Lagrangian coordinate}

In this section, we prove the regularity estimates, which are
required by the last section, namely, \eqref{eq1.3}, \eqref{S5eq5}
and  \eqref{prop5.2eq1}.

\begin{prop}\label{S6prop1} {\sl Let $s>-1$ and  $Y$ be a smooth enough solution of \eqref{1.2c} on $[0,T],
$ which satisfies the Inequality \eqref{Lip1}. We denote $$
\begin{aligned}
E_s(t) \eqdefa & \|Y_t\|_{\wt L^\infty_t(\mathcal{B}^{s,0})}
+\|\pa_3Y\|_{\wt L^\infty_t(\mathcal{B}^{s,0})}+\|Y\|_{\wt
L^\infty_t(\mathcal{B}^{s+2,0})}\\
&+ \|\pa_3 Y\|_{\wt L^2_t(\mathcal{B}^{s+1,0})}
+\|Y_t\|_{\wt{L}^2_t(\mathcal{B}^{s+1,0})} +\|
Y_t\|_{L^1_t(\mathcal{B}^{s+2,0})}+\|\na\vv
p\|_{{L}^1_t(\cB^{s,0})}.
\end{aligned}
$$
 Then
under the assumption of \eqref{Lip0qd},   we have
\beq\label{S6eq14a}
\begin{aligned}
E_s(t) \lesssim & c_0+\|\pa_3Y_0\|_{\mathcal{B}^{s,0}} + \|
Y_0\|_{\mathcal{B}^{s+2,0}}+\|
Y_1\|_{\mathcal{B}^{s,0}}\quad\mbox{for}\ \ t\in ]0, T].
\end{aligned}
\eeq  }
\end{prop}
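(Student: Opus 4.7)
The plan is to replicate the architecture of Proposition~\ref{Lip} in the anisotropic scale $\mathcal{B}^{s,0}$ instead of $\mathcal{B}^{0,\frac12}$, using the already-proven smallness of $\nabla Y$ in $\wt{L}^\infty_t(\mathcal{B}^{1,\frac12})$ (which is $\lesssim c_0$ by \eqref{Lip1}) as the small factor that gets peeled off in every nonlinear estimate. Because the small factor always lives in the fixed scale $\mathcal{B}^{1,\frac12}$, the closure constant is independent of $s$, which is exactly why \eqref{S6eq14a} holds uniformly for all $s>-1$.

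\textbf{Step 1 (linear estimate).} Applying $\Delta_j\Delta_\ell^{\rm v}$ to \eqref{1.2c} and repeating verbatim the energy computation of Lemma~\ref{S3-Lem1} up through \eqref{d5}--\eqref{d9} yields, for every $(j,\ell)\in\Z^2$,
$$
\|\Djl Y_t\|_{L^\infty_t(L^2)}+\|\Djl \p_3 Y\|_{L^\infty_t(L^2)}+\|\Djl \Delta Y\|_{L^\infty_t(L^2)}+2^{2j}\|\Djl Y_t\|_{L^1_t(L^2)}\lesssim \frak{g}_{j,\ell}(0)+\|\Djl f\|_{L^1_t(L^2)},
$$
together with the corresponding $L^2_t(L^2)$-bounds on $\|\nabla \Djl Y_t\|$ and $\|\nabla \Djl \p_3 Y\|$. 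Multiplying by $2^{js}$ and summing over $(j,\ell)\in\Z^2$ (without any extra $2^{\ell/2}$ weight, which is the only difference with Lemma~\ref{S3-Lem1}) gives
$$
E_s(t)\lesssim \|Y_1\|_{\mathcal{B}^{s,0}}+\|\p_3 Y_0\|_{\mathcal{B}^{s,0}}+\|Y_0\|_{\mathcal{B}^{s+2,0}}+\|f-(-\mathcal{A}^t\nabla\vv p)\|_{L^1_t(\mathcal{B}^{s,0})}+\|\nabla\vv p\|_{L^1_t(\mathcal{B}^{s,0})},
$$
where we separate the pressure contribution because it also appears in $E_s$.

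\textbf{Step 2 (nonlinear terms and pressure).} Since here $b_0=e_3$ gives $\mathcal{B}=Id$, the source reduces to $f=\nabla\cdot((\mathcal{A}\mathcal{A}^t-Id)\nabla Y_t)-\mathcal{A}^t\nabla\vv p$. Using the expansion \eqref{eq1.24a} and the anisotropic product law \eqref{GH1} (valid precisely for $s>-1$), I peel one factor into $\mathcal{B}^{1,\frac12}$:
$$
\bigl\|\nabla\!\cdot\!((\mathcal{A}\mathcal{A}^t-Id)\nabla Y_t)\bigr\|_{L^1_t(\mathcal{B}^{s,0})}\lesssim \|\nabla Y\|_{\wt L^\infty_t(\mathcal{B}^{1,\frac12})}\|\nabla Y_t\|_{L^1_t(\mathcal{B}^{s+1,0})}+\|\nabla Y\|_{\wt L^\infty_t(\mathcal{B}^{s+1,0})}\|\nabla Y_t\|_{L^1_t(\mathcal{B}^{1,\frac12})},
$$
both of which are bounded by $Cc_0\,(E_s(t)+\text{data})$ thanks to \eqref{Lip1}. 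For the pressure, the template of Lemma~\ref{S4lem2} applied in the $\mathcal{B}^{s,0}$-scale yields
$$
\|\nabla\vv p\|_{L^1_t(\mathcal{B}^{s,0})}\lesssim \|(\mathcal{A}\mathcal{A}^t-Id)\nabla\vv p\|_{L^1_t(\mathcal{B}^{s,0})}+\bigl\|\mathcal{A}\,\mathrm{div}\bigl(\mathcal{A}(\p_3Y\otimes\p_3 Y-Y_t\otimes Y_t)\bigr)\bigr\|_{L^1_t(\mathcal{B}^{s,0})};
$$
the first term is $\leq Cc_0\|\nabla\vv p\|_{L^1_t(\mathcal{B}^{s,0})}$ by \eqref{GH1} and \eqref{Lip1} and is absorbed to the left, while the second is controlled by
$$
\|\p_3 Y\|_{\wt L^2_t(\mathcal{B}^{1,\frac12})}\|\p_3 Y\|_{\wt L^2_t(\mathcal{B}^{s+1,0})}+\|Y_t\|_{\wt L^2_t(\mathcal{B}^{1,\frac12})}\|Y_t\|_{\wt L^2_t(\mathcal{B}^{s+1,0})}\leq Cc_0\,E_s(t).
$$
Similarly, one must check the corresponding estimate for $\mathcal{A}^t\nabla\vv p$ itself in $\mathcal{B}^{s,0}$, which reduces to $\|\nabla\vv p\|_{L^1_t(\mathcal{B}^{s,0})}$ modulo a factor $1+\|\nabla Y\|_{\wt L^\infty_t(\mathcal{B}^{1,\frac12})}\lesssim 1$.

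\textbf{Step 3 (closure).} Substituting the bounds of Step~2 into Step~1 gives
$$
E_s(t)\leq C\bigl(\|Y_1\|_{\mathcal{B}^{s,0}}+\|\p_3 Y_0\|_{\mathcal{B}^{s,0}}+\|Y_0\|_{\mathcal{B}^{s+2,0}}+c_0\bigr)+Cc_0\,E_s(t),
$$
and for $c_0$ sufficiently small the last term is absorbed, producing \eqref{S6eq14a}. The main technical point is to check that in every term of $f$ and $\nabla\vv p$ the product law \eqref{GH1} can be used with the $\mathcal{B}^{1,\frac12}$ factor placed on the argument already controlled by \eqref{Lip1}, so that the resulting multiplier of $E_s(t)$ is $\lesssim c_0$ uniformly in $s$; once this is done for the representative term $(\mathcal{A}\mathcal{A}^t-Id)\nabla Y_t$ the remaining terms are entirely analogous and yield the same bound.
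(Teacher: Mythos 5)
Your proposal is correct and reproduces the paper's argument: the same linear energy estimate in the $\cB^{s,0}$ scale, the same use of \eqref{GH1} to peel one factor into the fixed small norm $\cB^{1,\frac12}$ controlled by \eqref{Lip1}, and the same recursive pressure estimate with an absorption step. The only detail you gloss over is that \eqref{GH1} produces two cross-terms each time (e.g.\ in $\|(\cA\cA^t-Id)\na\vv p\|_{\cB^{s,0}}$ one also picks up a term with $\|\cA\cA^t-Id\|_{\wt L^\infty_t(\cB^{s+1,0})}\|\na\vv p\|_{L^1_t(\cB^{0,\frac12})}$, bounded by $c_0\,E_s(t)$ via \eqref{Lip1}); these extra terms are absorbed in Step~3 exactly as the ones you did write, so no gap results.
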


\begin{proof} In view of \eqref{1.2c},
we  get, by a similar derivation  \eqref{2.1}, that
\begin{equation}\label{GH}
\begin{aligned}
\|Y_t\|_{\wt L^\infty_t(\mathcal{B}^{s,0})} &+\|\pa_3Y\|_{\wt
L^\infty_t(\mathcal{B}^{s,0})} +\|Y\|_{\wt
L^\infty_t(\mathcal{B}^{2+s,0})}\\
&+ \|\pa_3 Y\|_{\wt L^2_t(\mathcal{B}^{s+1,0})} + \|Y_t\|_{\wt
L^2_t(\mathcal{B}^{s+1,0})}
+\|Y_t\|_{L^1_t(\mathcal{B}^{s+2,0})}\\
\lesssim & \|\pa_3 Y_0\|_{\mathcal{B}^{s,0}} +\|
Y_0\|_{\mathcal{B}^{s+2,0}}+\| Y_1\|_{\mathcal{B}^{s,0}} +\|
f\|_{L^1_t(\mathcal{B}^{s,0})}.
\end{aligned}
\end{equation}
Let us now handle term by term of $\|
f\|_{L^1_t(\mathcal{B}^{s,0})}$ for $f$ given by \eqref{1.2d}. It
follows  from the law of product, \eqref{GH1}, that  \beno
\begin{split}
\bigl\|\na\cdot\bigl((\mathcal{A} \mathcal{A}^t-Id)\na
Y_t\bigr)\bigr\|_{L^1_t(\cB^{s,0})}
 \lesssim &
\|\mathcal{A}\mathcal{A}^t-Id\|_{{L}^\infty_t(
\cB^{s+1,0})} \|\na Y_t\|_{L^1_t(\cB^{1,\f12})}\\
&+ \|\mathcal{A}\mathcal{A}^t-Id\|_{{L}^\infty_t(\cB^{1,\f12})}\|
Y_t\|_{L^1_t(\cB^{s+2,0})}.
\end{split}
\eeno Note from \eqref{Lip1} that \beno  \|\na
Y\|_{L^\infty_t(\cB^{1,\f12})} \leq C\|
Y\|_{\wt{L}^\infty_t(\cB^{2,\f12})}\leq Cc_0,\eeno and thus by the
law of product, Lemma \ref{L3}, \eqref{eq1.24a} and \eqref{Lip1}, we
have \beq\label{S6eq3}
\begin{split}
&\|\mathcal{A}\mathcal{A}^t-Id\|_{{L}^\infty_t(\cB^{1,\f12})}\leq
C\|\na Y\|_{L^\infty_t(\cB^{1,\f12})}\leq Cc_0,\andf\\
&\|\mathcal{A}\mathcal{A}^t-Id\|_{{L}^\infty_t(\cB^{s+1,0})}\leq
C\bigl(1+\|\na
Y\|_{L^\infty_t(\cB^{1,\f12})}\bigr)\|Y\|_{L^\infty_t(\cB^{s+2,0})}\leq
C\|Y\|_{L^\infty_t(\cB^{s+2,0})}.
\end{split} \eeq  So that by virtue of \eqref{Lip1}, we infer  \beq \label{S6eq13}
\begin{split}
 \bigl\|\na\cdot\bigl((\mathcal{A} \mathcal{A}^t-Id)\na
Y_t\bigr)\bigr\|_{L^1_t(\cB^{s,0})}\leq & C\bigl(\|
Y\|_{L^\infty_t(\cB^{s+2,0})} \|\na Y_t\|_{L^1_t(\cB^{1,\f12})}+c_0\|Y_t\|_{{L}^1_t(\cB^{s+2,0})}\bigr)\\
\leq & Cc_0\bigl(\| Y\|_{L^\infty_t(\cB^{s+2,0})}
+\|Y_t\|_{{L}^1_t(\cB^{s+2,0})}\bigr).
\end{split}
\eeq Similarly we deduce from \eqref{1.2d}, \eqref{GH1} and
\eqref{Lip1} that
 \beno
\begin{split}
\|\na\vv p\|_{{L}^1_t(\cB^{s,0})}\lesssim &\|(\cA\cA^t-Id)\na\vv p
\|_{{L}^1_t(
\cB^{s,0})}+\bigl\|\cA\dive\bigl(\cA\bigl(\p_3Y\otimes\p_3Y-Y_t\otimes
Y_t\bigr)\bigr)\bigr\|_{{L}^1_t(\cB^{s,0})}\\
\lesssim &\|(\cA\cA^t-Id)\|_{L^\infty_t(\cB^{1,\f12})}\|\na\vv p
\|_{{L}^1_t(\cB^{s,0})}+\|(\cA\cA^T-Id)\|_{\wt{L}^\infty_t(\cB^{1+s,0})}\|\na\vv p \|_{{L}^1_t(\cB^{0,\f12})}\\
&+\bigl(1+\|\cA-Id\|_{L^\infty_t(\cB^{1,\f12})}\bigr)\bigl\|\cA\bigl(\p_3Y\otimes\p_3Y-Y_t\otimes
Y_t\bigr)\bigr\|_{{L}^1_t(\cB^{1+s,0})}\\
&+\|\na\cA\|_{{L}^\infty_t(\cB^{1+s,0})}\bigl\|\cA\bigl(\p_3Y\otimes\p_3Y-Y_t\otimes
Y_t\bigr)\bigr\|_{{L}^1_t(\cB^{1,\f12})}.
\end{split}
\eeno Yet it follows from \eqref{Lip1} and the law of product, Lemma
\ref{L3},  that \beno
\begin{split}
\bigl\|\cA\bigl(\p_3Y&\otimes\p_3Y-Y_t\otimes
Y_t\bigr)\bigr\|_{{L}^1_t(\cB^{1,\f12})}\\
\lesssim
&\bigl(1+\|\cA-Id\|_{L^\infty_t(\cB^{1,\f12})}\bigr)\bigl(\|\p_3Y\|_{L^2_t(\cB^{1,\f12})}^2+\|Y_t\|_{L^2_t(\cB^{1,\f12})}^2\bigr)
\leq Cc_0^2, \end{split} \eeno and
 \beno
\begin{split}
\bigl\|\cA\bigl(&\p_3Y\otimes\p_3Y-Y_t\otimes
Y_t\bigr)\bigr\|_{{L}^1_t(\cB^{1+s,0})}\\
\lesssim &\bigl(1+\|\cA-Id\|_{{L}^\infty_t(\cB^{1+s,0})}\bigr)
\bigl(\|\p_3Y\|_{{L}^2_t(\cB^{1,\f12})}^2+\|Y_t\|_{{L}^2_t(\cB^{1,\f12})}^2\bigr)\\
&+\bigl(1+\|\cA-Id\|_{L^\infty_t(\cB^{1,\f12})}\bigr)\bigl(\|\p_3Y\|_{{L}^2_t(\cB^{1,\f12})}\|\p_3Y\|_{{L}^2_t(\cB^{1+s,0})}
+\|Y_t\|_{\wt{L}^2_t(\cB^{1,\f12})}\|Y_t\|_{{L}^2_t(\cB^{1+s,0})}\bigr)\\
\leq
&Cc_0\bigl(c_0+\|Y\|_{L^\infty_t(\cB^{2+s,0})}+\|\p_3Y\|_{{L}^2_t(\cB^{1+s,0})}+\|Y_t\|_{{L}^2_t(\cB^{1+s,0})}\bigr).
\end{split}
\eeno Hence in view of \eqref{Lip1} and  \eqref{S6eq3}, we get
\beq\label{S6eq14} \|\na\vv p\|_{{L}^1_t(\cB^{s,0})}\leq
Cc_0\bigl(c_0+\|Y\|_{{L}^\infty_t(\cB^{2+s,0})}+\|\p_3Y\|_{{L}^2_t(\cB^{1+s,0})}+\|Y_t\|_{{L}^2_t(\cB^{1+s,0})}\bigr).
\eeq Therefore thanks to \eqref{GH1} and  \eqref{Lip1}, we have
\beno
\begin{split}
\|\cA^t\na\vv p\|_{\wt{L}^1_t(\cB^{s,0})}\lesssim
&\bigl(1+\|\cA-Id\|_{L^\infty_t(\cB^{1,\f12})}\bigr)\|\na\vv
p\|_{{L}^1_t(\cB^{s,0})}+\bigl(1+\|\na\cA\|_{{L}^\infty_t(\cB^{s,0})}\bigr)\|\na\vv p\|_{L^1_t(\cB^{0,\f12})}\\
\leq&
Cc_0\bigl(c_0+\|Y\|_{{L}^\infty_t(\cB^{2+s,0})}+\|\p_3Y\|_{{L}^2_t(\cB^{1+s,0})}+\|Y_t\|_{{L}^2_t(\cB^{1+s,0})}\bigr),
\end{split} \eeno which together with \eqref{S6eq13} ensures that
\beno
\begin{split}
\|f\|_{ L^1_t(\cB^{s,0})} \leq Cc_0\Bigl(&c_0+\|
Y\|_{{L}^\infty_t(\cB^{s+2,0})}+\|\pa_3
Y\|_{L^2_t(\cB^{s+1,0})}+\|Y_t\|_{L^2_t(\cB^{s+1,0})}+\|Y_t\|_{{L}^1_t(\cB^{s+2,0})}\Bigr).
\end{split}
\eeno Then by resuming the above estimate  into \eqref{GH} and
taking $c_0$ to be sufficiently small gives rise to
\begin{equation*}
\begin{aligned}
\|Y_t\|_{\wt L^\infty_t(\mathcal{B}^{s,0})} &+\|\pa_3Y\|_{\wt
L^\infty_t(\mathcal{B}^{s,0})} +\|Y\|_{\wt
L^\infty_t(\mathcal{B}^{2+s,0})}+ \|\pa_3 Y\|_{\wt
L^2_t(\mathcal{B}^{s+1,0})}\\
&+\|Y_t\|_{\wt L^2_t(\mathcal{B}^{s+1,0})}^2
+\|Y_t\|_{L^1_t(\mathcal{B}^{s+2,0})}\\
\leq  & C\Bigl(c_0+\|\pa_3 Y_0\|_{\mathcal{B}^{s,0}} +\|
Y_0\|_{\mathcal{B}^{s+2,0}}+\| Y_1\|_{\mathcal{B}^{s,0}}\Bigr),
\end{aligned}
\end{equation*}
from which and \eqref{S6eq14}, we deduce \eqref{S6eq14a}. This
completes the proof of the Proposition.
\end{proof}

An immediate corollary of Proposition \ref{S6prop1} and Definitions
\ref{def1} and \ref{def2} gives

\begin{col}\label{LM}
{\sl Under the assumptions of Proposition \ref{S6prop1}, one has
\beq
\begin{aligned}
\|Y_t\|_{\wt L^\infty_T(\dot H^{s})}& +\|\p_3Y\|_{\wt
L^\infty_T(\dot H^s)} +\|Y\|_{\wt L^\infty_T(\dot H^{2+s})} +\|Y_t\|_{L^2_T(\dot H^{s+1})} \\
&  +\|\p_3Y\|_{L^2_T(\dot
H^{s+1})}+\|Y_t\|_{L^1_T(\dot H^{2+s})}+\|\na\vv p\|_{{L}^1_t(\dot H^{s})}\\
\leq  & C\Bigl(c_0+\|\pa_3 Y_0\|_{\mathcal{B}^{s,0}} +\|
Y_0\|_{\mathcal{B}^{s+2,0}}+\| Y_1\|_{\mathcal{B}^{s,0}}\Bigr)\quad
\mbox{ for any} \ \ s>-1.
\end{aligned}
\eeq}
\end{col}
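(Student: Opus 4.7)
The plan is to deduce the corollary from Proposition \ref{S6prop1} by lifting the pointwise embedding $\cB^{s,0}(\R^3)\hookrightarrow \dot H^s(\R^3)$ to the appropriate time-dependent spaces, then reading off each of the seven bounds in turn. No new estimate on the equation is required; the work is purely harmonic-analytic.

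For the pointwise-in-time embedding, the key computation uses the almost $L^2$-orthogonality of the vertical dyadic blocks, which gives $\|\D_j a\|_{L^2}^2 \lesssim \sum_{\ell\in\Z}\|\D_j\D_\ell^\v a\|_{L^2}^2$, hence
$$
\|a\|_{\dot H^s}^2 = \sum_j 2^{2js}\|\D_j a\|_{L^2}^2 \lesssim \sum_{j,\ell}2^{2js}\|\D_j\D_\ell^\v a\|_{L^2}^2 \leq \Bigl(\sum_{j,\ell}2^{js}\|\D_j\D_\ell^\v a\|_{L^2}\Bigr)^2 = \|a\|_{\cB^{s,0}}^2,
$$
where the last inequality is just $\ell^2\subset\ell^1$ on the double index $(j,\ell)$.

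To transfer to the mixed-time norms of the corollary, I would apply the same $\ell^2\subset\ell^1$ argument to the sequence $(\|\D_j\D_\ell^\v u\|_{L^q_T(L^2)})_{j,\ell}$, obtaining $\|u\|_{\wt L^q_T(\dot H^\sigma)} \lesssim \|u\|_{\wt L^q_T(\cB^{\sigma,0})}$ for $q\in\{\infty,2\}$; this controls the first three terms of the corollary directly and, combined with the Fubini identity $L^2_T(\dot H^{s+1})=\wt L^2_T(\dot H^{s+1})$, also the two $L^2_T$ quantities. For the remaining two $L^1_T$ terms, I would integrate the pointwise embedding in $t$ and use the Tonelli identity $\|u\|_{L^1_T(\cB^{\sigma,0})}=\|u\|_{\wt L^1_T(\cB^{\sigma,0})}$ to obtain $\|u\|_{L^1_T(\dot H^\sigma)}\lesssim \|u\|_{L^1_T(\cB^{\sigma,0})}$, with $\sigma=s+2$ for $Y_t$ and $\sigma=s$ for $\na\vv p$. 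Substituting \eqref{S6eq14a} into these seven embeddings produces the stated bound. The only real obstacle is bookkeeping---making sure that in each norm the $\ell^2\subset\ell^1$ loss on the isotropic index is absorbed by the $\ell^1$ summation already built into the $\cB^{\sigma,0}$ norm---which is why the corollary is marked as immediate.
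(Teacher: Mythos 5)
Your proposal is correct and reproduces exactly the argument the paper is tacitly invoking when it calls the statement ``an immediate corollary of Proposition~\ref{S6prop1} and Definitions~\ref{def1} and~\ref{def2}'': the pointwise embedding $\cB^{\sigma,0}\hookrightarrow\dot H^\sigma$ (Plancherel plus finite vertical overlap plus $\ell^2\subset\ell^1$ on the double index), its lift to the Chemin--Lerner norms $\wt L^q_T$ for $q=\infty,2$, the identity $\wt L^2_T(\dot H^\sigma)=L^2_T(\dot H^\sigma)$, and the integrated pointwise embedding for the two $L^1_T$ quantities, applied term by term to the seven members of $E_s(t)$. The only cosmetic point is that the Tonelli identity $\wt L^1_T(\cB^{\sigma,0})=L^1_T(\cB^{\sigma,0})$ is not actually needed, since Proposition~\ref{S6prop1} already records the last two bounds directly in $L^1_t$, and integrating the pointwise embedding in $t$ suffices; but this does not affect the correctness of the argument.
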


\begin{prop}\label{S6prop2}
{\sl Let $Y$ be a smooth enough solution of \eqref{1.2c} on $[0,T],
$ which satisfies the Estimate \eqref{Lip1}. Then under the
assumptions of \eqref{Lip0qd} and \eqref{S2eq27}, One has \beq
\label{S6eq1}
\|Y_t\|_{L^\infty_t(\cB^{0,-\f12}_{2,\infty})}+\|Y_t\|_{
L^\infty_t(\cB^{3,-\f12}_{2,\infty})}+\|Y\|_{
L^\infty_t(\cB^{2,-\f12}_{2,\infty})}+\|Y\|_{L^\infty_t(\cB^{5,-\f12}_{2,\infty})}
+\|Y\|_{L^\infty_t(\cB^{1,-\f12}_{\infty,\infty})}\leq C(c_0+\d_0)
\eeq for any $t\leq T.$}
\end{prop}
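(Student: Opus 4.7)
The proof should reproduce the three-step pattern of Propositions~\ref{Lip} and~\ref{S6prop1}: derive a linear energy estimate in the target space, bound the nonlinear source $f$ and the pressure $\na\vv p$ by anisotropic product laws, and close by smallness. The novelty here is that the target space $\cB^{s,-\f12}_{r,\infty}$ has negative vertical regularity and weak vertical summability ($\ell^\infty$), so both steps must be done in this weighted scale. The five target norms fall into three families indexed by $(s,r)\in\{(0,2),(3,2),(-1,\infty)\}$, the last giving $\|Y\|_{L^\infty_t(\cB^{1,-\f12}_{\infty,\infty})}$ via the shift $Y\leftrightarrow \D Y_0$ inherent to the linear estimate.

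\textbf{Linear estimate.} Applying $\Djl$ to \eqref{1.2c}, the energy identity \eqref{d5} and the high/low vertical-frequency dichotomy that proved Lemma~\ref{S3-Lem1} carry over verbatim. Multiplying by $2^{js}2^{-\ell/2}$, taking $\sup_\ell$, then the $\ell^r$-norm in $j$ and $L^\infty$ in time, I obtain
$$
\|Y_t\|_{\wt L^\infty_t(\cB^{s,-\f12}_{r,\infty})}+\|\p_3 Y\|_{\wt L^\infty_t(\cB^{s,-\f12}_{r,\infty})}+\|Y\|_{\wt L^\infty_t(\cB^{s+2,-\f12}_{r,\infty})}+\|Y_t\|_{L^1_t(\cB^{s+2,-\f12}_{r,\infty})}\lesssim I_{s,r}+\|f\|_{L^1_t(\cB^{s,-\f12}_{r,\infty})}.
$$
The initial-data contribution $I_{s,r}$ is provided directly by \eqref{S2eq27} for $(s,r)=(3,2),(-1,\infty)$, and by interpolation between those bounds and the $\cB^{\cdot,0}$ estimates of Proposition~\ref{S6prop1} (using the low-frequency decay coming from $\cB^{\cdot,-\f12}_{\infty,\infty}$ and the high-frequency decay coming from $\cB^{\cdot,0}$) for $(s,r)=(0,2)$.

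\textbf{Source and pressure.} From \eqref{1.2d}, $f=\na\cdot((\cA\cA^t-Id)\na Y_t)-\cA^t\na \vv p$. Expanding $\cA-Id=\sum_{n\ge1}(-\na Y)^n$ via \eqref{eq1.24a} and applying Lemma~\ref{PRO} gives, for $(s,r)=(0,2),(3,2)$,
$$
\|(\cA\cA^t-Id)\na Y_t\|_{\cB^{s,-\f12}_{1,\infty}}\lesssim \|\cA\cA^t-Id\|_{\cB^{1,0}}\|\na Y_t\|_{\cB^{s,0}}+\|\cA\cA^t-Id\|_{\cB^{s+\d,0}}\|\na Y_t\|_{\cB^{1-\d,0}},
$$
via \eqref{PROeq1} combined with the embedding $\cB^{\cdot,-\f12}_{1,\infty}\hookrightarrow\cB^{\cdot,-\f12}_{2,\infty}$, while \eqref{PROeq2} handles $(s,r)=(-1,\infty)$. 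The $\cB^{1,\f12}$-type factors are controlled by Proposition~\ref{Lip} and the $\cB^{s+2,0}$-type factors by Corollary~\ref{LM}. For the pressure I revisit Lemma~\ref{S4lem2}'s argument with $\cB=Id$: from the elliptic relation in \eqref{1.2d} and the decomposition \eqref{S4eq31}, the same product laws plus absorption of the small factor $\|\cA\cA^t-Id\|_{\cB^{1,\f12}}\lesssim c_0$ reduce $\|\na\vv p\|_{L^1_t(\cB^{s,-\f12}_{r,\infty})}$ to quadratic expressions in $\p_3Y$ and $Y_t$ in the $L^2_t(\cB^{s+1,-\f12}_{r,\infty})\times L^2_t(\cB^{1,\f12})$ setting, already at our disposal.

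\textbf{Closure and main obstacle.} Each nonlinear contribution to $\|f\|_{L^1_t(\cB^{s,-\f12}_{r,\infty})}$ is then either a small multiple of one of the target norms or a quadratic quantity already bounded by $c_0+\d_0$. Choosing $c_0,\d_0$ sufficiently small absorbs the first type on the left and yields \eqref{S6eq1}. The chief technical difficulty is that Lemma~\ref{PRO} is stated only for horizontal indices $s\in[-1,1]$ (in the $\cB^{\cdot,-\f12}_{\infty,\infty}$ case) and with horizontal summability exponent~$1$; extending to the higher indices $s=3,5$ forces me to revisit the Bony decomposition \eqref{PROeq4} and exploit the supplementary $\cB^{s+2,0}$ regularity from Corollary~\ref{LM} to tame the remainder $R\bar R^\v(a,b)$, the eight paraproduct pieces being routine. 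A secondary, purely technical issue is the aforementioned interpolation that recovers $\|Y_1\|_{\cB^{0,-\f12}_{2,\infty}}$ and $\|Y_0\|_{\cB^{2,-\f12}_{2,\infty}}$ from the initial assumptions, but this is straightforward.
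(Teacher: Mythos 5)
Your overall three-step strategy — an anisotropic energy estimate in the weighted scale, product-law bounds on $f$ and $\na\vv p$, and closure by smallness — is the same as the paper's. But the ``chief technical difficulty'' you single out does not exist, and your pressure step diverges from the argument the paper actually needs.

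The restriction $s'\in[-1,1]$ applies only to \eqref{PROeq2}, and the paper invokes that estimate only once, in the $\cB^{1,-\f12}_{\infty,\infty}$ step, at $s'=-1$ and $s'=0$. For the two $\ell^2$-summable families the paper works with \eqref{PROeq1}, which is stated for every $s>-1$ with no upper bound; it is applied at horizontal indices up to $4$, all in range. No extension of Lemma~\ref{PRO} is needed, the $R\bar R^\v$ remainder for $s=3,5$ is already covered by the lemma as stated, and the ``supplementary $\cB^{s+2,0}$ regularity from Corollary~\ref{LM}'' is not the missing ingredient you suggest. Your pressure route also deviates: you propose to estimate $\|\na\vv p\|$ directly in $\cB^{s,-\f12}_{r,\infty}$ and absorb the factor $\cA\cA^t-Id$ in that scale, but Lemma~\ref{PRO} furnishes such an absorption law only for $r=\infty$. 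For $(s,r)=(0,2),(3,2)$ the paper instead bounds $\|\na\vv p\|_{L^1_t(\cB^{s,0})}$ in the nonnegative vertical scale (estimate \eqref{SV} together with the $s=3$ output of Proposition~\ref{S6prop1}) and then transfers to $\cB^{s,-\f12}_{2,\infty}$ by applying \eqref{PROeq1} to the whole source $f$, not to $\na\vv p$ itself; your claim that the $L^2_t(\cB^{s+1,-\f12}_{r,\infty})$ bounds of $\p_3Y$ and $Y_t$ are ``already at our disposal'' is also imprecise, since \eqref{DEC} as written yields only $L^\infty_t$ control. Lastly, the paper recovers $\|Y_1\|_{\cB^{0,-\f12}_{2,\infty}}$ and the companion low-index data norms by interpolation purely within the fixed vertical index $-\f12$, namely $\|a\|_{\cB^{s,-\f12}_{2,\infty}}\lesssim\|a\|_{\cB^{s_1,-\f12}_{\infty,\infty}}^\th\|a\|_{\cB^{s_2,-\f12}_{2,\infty}}^{1-\th}$, rather than against the $\cB^{\cdot,0}$ scale as you propose.
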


\begin{proof} We first deduce from Proposition \ref{S6prop1} and
\eqref{S2eq27} that \beq\label{S6eq11} E_0(t)+E_3(t)\leq
C(c_0+\d_0)\eeq for $E_s(t)$ given by Proposition \ref{S6prop1}.
While in view of Definition \ref{def2}, we get, by a similar
derivation of \eqref{2.1}, that for all $s\in\R,$
\begin{equation}\label{DEC}
\begin{aligned}
\|Y_t\|_{\wt L^\infty_t(\cB^{s,-\f12}_{2,\infty})}+\|Y\|_{\wt
L^\infty_t(\cB^{2+s,-\f12}_{2,\infty})} \lesssim \|\pa_3
Y_0\|_{\cB^{s,-\f12}_{2,\infty}}+\|
Y_0\|_{\cB^{2+s,-\f12}_{2,\infty}} +\|
Y_1\|_{\cB^{s,-\f12}_{2,\infty}}+\| f\|_{\wt
L^1_t(\cB^{s,-\f12}_{2,\infty})}.
\end{aligned}
\end{equation}

\no$\bullet$ \underline{\bf The  estimate of $\|Y_t\|_{\wt
L^\infty_t(\cB^{0,-\f12}_{2,\infty})}$ and $\|Y\|_{\wt
L^\infty_t(\cB^{2,-\f12}_{2,\infty})} $}\;

It is easy to observe from Definition \ref{def2} that \beno
\|a\|_{\cB^{s,-\f12}_{2,\infty}}\lesssim
\|a\|_{\cB^{s_1,-\f12}_{\infty,\infty}}^\th\|a\|_{\cB^{s_2,-\f12}_{2,\infty}}^{1-\th}
\with s=\th s_1+(1-\th)s_2\andf \th\in ]0,1[,\eeno from which and
\eqref{S2eq27}, we infer \beq\label{S6eq18} \|\pa_3
Y_0\|_{\cB^{0,-\f12}_{2,\infty}}+\| Y_0\|_{\cB^{2,-\f12}_{2,\infty}}
+\| Y_1\|_{\cB^{0,-\f12}_{2,\infty}}\leq C\d_0. \eeq Then thanks to
\eqref{DEC}, we only need to deal with the estimate of $\| f\|_{\wt
L^1_t(\cB^{0,-\f12}_{2,\infty})}$. Indeed according to \eqref{1.2d},
we deduce from Lemma \ref{PRO}  that \beq\label{S6eq2}
\begin{aligned}
\| f\|_{\wt L^1_t(\cB^{0,-\f12}_{2,\infty})} &\lesssim
\|(\mathcal{A}\mathcal{A}^t-Id)\na Y_t\|_{\wt
L^1_t(\cB^{1,-\f12}_{2,\infty})} +\|\mathcal{A}^t\na \vv p\|_{\wt
L^1_t(\cB^{0,-\f12}_{2,\infty})}
\\&
\lesssim
\|\mathcal{A}\mathcal{A}^t-Id\|_{L^\infty_t(\cB^{1,0})}\|Y_t\|_{L^1_t(\cB^{2,0})}
+\bigl(1+\|\mathcal{A}-Id\|_{L^\infty(\cB^{1,0})}\bigr)\|\na \vv
p\|_{L^1_t(\cB^{0,0})}
\\&
\lesssim\bigl(1+\|\na
Y\|_{L^\infty_t(\cB^{1,\f12})}\bigr) \|Y\|_{L^\infty_t(\cB^{2,0})}\|Y_t\|_{L^1_t(\cB^{2,0})}\\
&\quad+\Bigl(1+\bigl(1+\|\na
Y\|_{L^\infty_t(\cB^{1,\f12})}\bigr)\|\na
Y\|_{L^\infty_t(\cB^{1,0})}\Bigr)\|\na \vv p\|_{L^1_t(\cB^{0,0})}.
\end{aligned}
\eeq Yet it follows from \eqref{1.2d} and the law of product, Lemma
\ref{L3}, that
$$
\begin{aligned}
\|\na \vv p\|_{L^1_t(\cB^{0,0})} &\lesssim
\|(\mathcal{A}\mathcal{A}^T-Id)\na \vv p\|_{L^1_t(\cB^{0,0})}
+\bigl\|\mathcal{A}\dv \bigl(\mathcal{A}\bigl(\pa_{3}
Y\otimes\pa_{3} Y -Y_t\otimes
Y_t\bigr)\bigr)\bigr\|_{L^1_t(\cB^{0,0})}
\\&
\lesssim \|\na Y\|_{L^\infty_t(\cB^{1,\f12})}\|\na
\vv p\|_{L^1_t(\cB^{0,0})}\\
&\quad +\bigl(1+\|\na Y\|_{L^\infty_t(\cB^{1,\f12})}\big)
\bigl\|\mathcal{A}\bigl(\pa_{3} Y\otimes\pa_{3} Y -Y_t\otimes
Y_t\bigr)\bigr\|_{L^1_t(\cB^{1,0})},
\end{aligned}
$$
from which, \eqref{Lip1}, Lemma \ref{L3} and Lemma \ref{L1},   we
infer
\begin{equation}\label{SV}
\begin{aligned}
\|\na \vv p\|_{L^1_t(\cB^{0,0})} &\lesssim
\|\mathcal{A}\bigl(\pa_{3} Y\otimes\pa_{3} Y -Y_t\otimes
Y_t\bigr)\|_{L^1_t(\cB^{1,0})}
\\&
\lesssim \bigl(1+\|\na Y\|_{L^\infty_t(\cB^{1,\f12})}\big)
\bigl(\|\pa_{3}Y\|_{L^2_t(\cB^{1,\f14})}^2+\|
Y_t\|_{L^2_t(\cB^{1,\f14})}^2\bigr)
\\&
\lesssim \|\pa_{3}Y\|_{L^2_t(\dot B^{\f54})}^2+\| Y_t\|_{L^2_t(\dot
B^{\f54})}^2\\
&\lesssim \|\pa_{3}Y\|_{L^2_t(\cB^{\f54,0})}^2+\|
Y_t\|_{L^2_t(\cB^{\f54,0})}^2.
\end{aligned}
\end{equation}
Resuming the Estimate  \eqref{SV} into \eqref{S6eq2} and using
\eqref{S6eq11}, we obtain
\begin{equation}\label{SEVR}
\begin{split}
\| f\|_{\wt L^1_t(\cB^{0,-\f12}_{2,\infty})} \lesssim
&\|Y\|_{L^\infty_t(\cB^{2,0})}\|Y_t\|_{L^1_t(\cB^{2,0})}
+(c_0+\d_0)\bigl(1+\|Y\|_{L^\infty_t(\cB^{2,0})}\bigr)\\
\leq & C(c_0+\d_0). \end{split} \eeq Thus in view of \eqref{S6eq18}
and \eqref{DEC}, we conclude \beq\label{S6eq8}
\begin{aligned}
\|Y_t\|_{\wt L^\infty_t(\cB^{0,-\f12}_{2,\infty})}+\|Y\|_{\wt
L^\infty_t(\cB^{2,-\f12}_{2,\infty})}\leq  C(c_0+\d_0).
\end{aligned}
\eeq

\no$\bullet$ \underline{\bf The  estimate of $\|Y_t\|_{\wt
L^\infty_t(\cB^{3,-\f12}_{2,\infty})}$ and $\|Y\|_{\wt
L^\infty_t(\cB^{5,-\f12}_{2,\infty})} $}\;

In view of \eqref{1.2d}, we get, by applying the law of product
Lemma \ref{PRO},  that
$$
\begin{aligned}
\| f\|_{\wt L^1_t(\cB^{3,-\f12}_{2,\infty})} \lesssim &
\|(\mathcal{A}\mathcal{A}^t-Id)\na Y_t\|_{\wt
L^1_t(\cB^{4,-\f12}_{2,\infty})} +\|\mathcal{A}^t\na \vv p\|_{\wt
L^1_t(\cB^{3,-\f12}_{2,\infty})}
\\
\lesssim&
\|\mathcal{A}\mathcal{A}^T-Id\|_{L^\infty(\cB^{4,0})}\|Y_t\|_{L^1_t(\cB^{2,0})}
+\|\mathcal{A}\mathcal{A}^T-Id\|_{L^\infty(\cB^{1,0})}\|Y_t\|_{L^1_t(\cB^{5,0})}\\
&+\bigl(1+\|\mathcal{A}-Id\|_{L^\infty(\cB^{1,0})}\bigr)\|\na \vv
p\|_{L^1_t(\cB^{3,0})}
+\bigl(1+\|\mathcal{A}-Id\|_{L^\infty(\cB^{4,0})}\bigr)\|\na \vv
p\|_{L^1_t(\cB^{0,0})}
\\
\lesssim &\|Y\|_{L^\infty(\cB^{2,0})}\|Y_t\|_{L^1_t(\cB^{5,0})}
+\bigl(1+\|Y\|_{L^\infty(\cB^{2,0})}\bigr)\|\na \vv
p\|_{L^1_t(\cB^{3,0})}
\\&
+\bigl(1+\|Y\|_{L^\infty(\cB^{5,0})}\bigr)\bigl(\|\na \vv
p\|_{L^1_t(\cB^{0,0})}+\|Y_t\|_{L^1_t(\cB^{2,0})}\bigr),
\end{aligned}
$$
from which, \eqref{S6eq11} and \eqref{SV}, we infer
$$
\| f\|_{\wt L^1_t(\cB^{3,-\f12}_{2,\infty})} \le C(c_0+\d_0).
$$
While due to \eqref{S2eq27}, one has \beno
 \|\pa_3
Y_0\|_{\cB^{3,-\f12}_{2,\infty}}+\| Y_0\|_{\cB^{5,-\f12}_{2,\infty}}
+\| Y_1\|_{\cB^{3,-\f12}_{2,\infty}}\leq C\d_0. \eeno Thus we deduce
from \eqref{DEC} that \beq\label{S6eq9}
\begin{split} \|Y_t\|_{\wt
L^\infty_t(\cB^{3,-\f12}_{2,\infty})} +\|Y\|_{\wt
L^\infty_t(\cB^{5,-\f12}_{2,\infty})}\le C(c_0+\d_0).
\end{split}
\eeq

\, \no$\bullet$ \underline{\bf The estimate of
$\|Y\|_{\wt{L}^\infty_t({\cB^{1,-\f12}_{\infty,\infty}})}$}\;

Again in view of \eqref{1.2c}, we get, by a similar derivation of
\eqref{2.1}, that \beq\label{S6eq12}\begin{split} \|Y_t\|_{\wt
L^\infty_t(\cB^{-1,-\f12}_{\infty,\infty})}+\|Y\|_{\wt
L^\infty_t(\cB^{1,-\f12}_{\infty,\infty})}+&\|\pa_3Y\|_{\wt
L^2_t(\cB^{0,-\f12}_{\infty,\infty})} + \|Y_t\|_{\wt
L^2_t(\cB^{0,-\f12}_{\infty,\infty})}+\|Y_t\|_{\wt
L^1_t(\cB^{1,-\f12}_{\infty,\infty})}\\
  \lesssim&  \|\pa_3
Y_0\|_{\cB^{-1,-\f12}_{\infty,\infty}}+\|
Y_0\|_{\cB^{1,-\f12}_{\infty,\infty}} +\|
Y_1\|_{\cB^{-1,-\f12}_{\infty,\infty}}+\| f\|_{\wt
L^1_t(\cB^{-1,-\f12}_{\infty,\infty})}.
\end{split}\eeq
To deal with the estimate $\| f\|_{\wt
L^1_t(\cB^{-1,-\f12}_{\infty,\infty})},$ we deduce from
\eqref{1.2d}, the law of product, Lemma \ref{PRO}, that
$$
\begin{aligned}
\| f\|_{\wt L^1_t(\cB^{-1,-\f12}_{\infty,\infty})} &\lesssim
\|(\mathcal{A}\mathcal{A}^t-Id)\na Y_t\|_{\wt
L^1_t(\cB^{0,-\f12}_{\infty,\infty})} +\bigl(1+\|\na Y\|_{\wt
L^\infty_t(\cB^{1,\f12})}\bigr) \|\na \vv p\|_{\wt
L^1_t(\cB^{-1,-\f12}_{\infty,\infty})}
\\&
\lesssim \|\mathcal{A}\mathcal{A}^t-Id\|_{\wt
L^\infty_t(\cB^{1,\f12})} \|\na Y_t\|_{\wt
L^1_t(\cB^{0,-\f12}_{\infty,\infty})} +\|\na \vv p\|_{\wt
L^1_t(\cB^{-1,-\f12}_{\infty,\infty})}
\\&
\lesssim \|\na Y\|_{\wt L^\infty_t(\cB^{1,\f12})}\|Y_t\|_{\wt
L^1_t(\cB^{1,-\f12}_{\infty,\infty})} +\|\na \vv p\|_{\wt
L^1_t(\cB^{-1,-\f12}_{\infty,\infty})}.
\end{aligned}
$$
While it follows from \eqref{1.2d}, Lemma \ref{L3} and \eqref{Lip1}
that
$$
\begin{aligned}
\|\na \vv p\|_{\wt L^1_t(\cB^{-1,-\f12}_{\infty,\infty})}  \lesssim
& \|\mathcal{A}\bigl(\pa_{3} Y\otimes\pa_{3} Y -Y_t\otimes
Y_t\bigr)\|_{\wt L^1_t(\cB^{0,-\f12}_{\infty,\infty})}
\\
\lesssim & \|\pa_{3} Y\|_{\wt L^2_t(\cB^{1,\f12})} \|\pa_{3}
Y\|_{\wt L^2_t(\cB^{0,-\f12}_{\infty,\infty})} +\|Y_t\|_{\wt
L^2_t(\cB^{1,\f12})} \|Y_t\|_{\wt
L^2_t(\cB^{0,-\f12}_{\infty,\infty})}\\
\leq & Cc_0\Bigl(\|\pa_{3} Y\|_{\wt
L^2_t(\cB^{0,-\f12}_{\infty,\infty})} + \|Y_t\|_{\wt
L^2_t(\cB^{0,-\f12}_{\infty,\infty})}\Bigr).
\end{aligned}
$$
Therefore, we achieve \beno \| f\|_{\wt
L^1_t(\cB^{-1,-\f12}_{\infty,\infty})}\leq Cc_0\Bigl(\|Y_t\|_{\wt
L^1_t(\cB^{1,-\f12}_{\infty,\infty})} +\|\pa_{3} Y\|_{\wt
L^2_t(\cB^{0,-\f12}_{\infty,\infty})} + \|Y_t\|_{\wt
L^2_t(\cB^{0,-\f12}_{\infty,\infty})}\Bigr). \eeno Resuming the
above estimate into \eqref{S6eq12} and using \eqref{S2eq27} gives
rise to \beq\label{S6eq10} \begin{split} \|Y_t\|_{\wt
L^\infty_t(\cB^{-1,-\f12}_{\infty,\infty})}+\|Y\|_{\wt
L^\infty_t(\cB^{1,-\f12}_{\infty,\infty})}+&\|\pa_3Y\|_{\wt
L^2_t(\cB^{0,-\f12}_{\infty,\infty})} + \|Y_t\|_{\wt
L^2_t(\cB^{0,-\f12}_{\infty,\infty})}+\|Y_t\|_{\wt
L^1_t(\cB^{1,-\f12}_{\infty,\infty})}\leq C\d_0.
\end{split}
\eeq Finally it follows from Definition \ref{def3} that \beno
\|a\|_{L^\infty_t(\cB^{s,-\f12}_{p,\infty})}\leq
\|a\|_{\wt{L}^\infty_t(\cB^{s,-\f12}_{p,\infty})} \quad\mbox{for
any}\ \ p\in [1,\infty]. \eeno Then by summing up  the Estimates
\eqref{S6eq8}, \eqref{S6eq9} and \eqref{S6eq10}, we conclude the
proof of \eqref{S6eq1}.
\end{proof}

\setcounter{equation}{0}
\section{The proof of Theorems \ref{thm1} and \ref{S2thm1} }

 Let us first present  the proof of Theorem \ref{S2thm1}.

\begin{proof}[Proof of Theorem \ref{S2thm1}] In general, the existence
of solutions to a nonlinear partial differential equation can be
obtained by performing uniform estimates to the appropriate
approximate solutions. Here for simplicity, we just present the {\it
a priori} estimate for smooth enough solution, $Y,$ of \eqref{1.2c}.
Indeed under the assumption of \eqref{Lip0qd}, we first deduce from
Proposition \ref{Lip} that $Y$ satisfies the Inequality
\eqref{Lip1}. Then it follows from Proposition \ref{S6prop1} that
there holds \eqref{S2eq29}, which ensures the global existence part
of Theorem \ref{S2thm1}. The uniqueness of such smooth solution is
standard, we omit the details here.

In order to prove the decay estimate \eqref{S2eq26}, we need to
verify the smallness conditions \eqref{eq1.3} and
\eqref{prop5.2eq1}, which are guaranteed by \eqref{S2eq29} and
Proposition \ref{S6prop2} provided that there holds \eqref{S2eq27}.
This completes the proof of Theorem \ref{S2thm1}.
\end{proof}

Now we are in a position to complete the proof of Theorem
\ref{thm1}. Let us first recall Proposition 6.1 from \cite{XZ15}:

\begin{prop}\label{p9}
{\sl Let  $b_0-\vv e_3\in H^{s}(\R^3)$ and $u_0\in H^s(\R^3)$ for
$s>\f32,$ \eqref{1.1} has a unique solution $(b,  u)$ on $[0,T]$ for
some $T>0$ so that $b-e_3\in C([0,T]; H^{s}(\R^3)),$ $  u\in
C([0,T]; H^{s}(\R^3))$ with $\na u\in L^2((0,T);H^s(\R^3))$ and $
\na p\in C([0,T]; H^{s-1}(\R^3)).$ Moreover, if $T^\ast$ is the life
span to this solution, and $T^\ast<\infty,$ one has  \beq
\int_0^{T^\ast}\bigl(\|\na u(t)\|_{L^\infty}+\|
b(t)\|_{L^\infty}^2\bigr)\,dt=\infty. \label{qs2}\eeq}
\end{prop}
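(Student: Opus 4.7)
The plan is to split the argument into three parts: (i) constructing a local smooth solution via a Friedrichs-type approximation, (ii) uniqueness through an $L^2$-energy estimate on the difference of two solutions, and (iii) the blow-up criterion \eqref{qs2} via a Beale--Kato--Majda-type continuation argument. Throughout, the pressure is recovered from \eqref{S2eq2}, so I work with the equivalent pressureless form \eqref{S2eq3}.

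For part (i), I would set $J_n=\chi(2^{-n}|D|)$ and replace every quadratic term $a\cdot\na c$ in \eqref{S2eq3} by $J_n(J_n a\cdot\na J_n c)$, with pressure given by the correspondingly mollified elliptic formula. The resulting system is an ODE on the closed $L^2$-subspace of functions with Fourier support in $\{|\xi|\lesssim 2^n\}$, hence has a unique global smooth solution $(b_n,u_n)$. To obtain a uniform-in-$n$ time of existence, I apply $\Lambda^s$ to both equations, pair with $\Lambda^s b_n$ and $\Lambda^s u_n$ respectively, and exploit $\dive u_n=\dive b_n=0$ together with the exact cancellation
\[
\bigl((b\cdot\na)\Lambda^s u,\Lambda^s b\bigr)_{L^2}+\bigl((b\cdot\na)\Lambda^s b,\Lambda^s u\bigr)_{L^2}=0
\]
to eliminate the most singular magnetic contributions. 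What remains are Kato--Ponce commutators, which by the product law and the Sobolev embedding $H^s\hookrightarrow W^{1,\infty}$ (valid for $s>5/2$, and supplemented by a standard interpolation for $3/2<s\le 5/2$) are dominated by a polynomial in $\|b_n\|_{H^s}+\|u_n\|_{H^s}$. Gronwall yields a uniform time $T>0$, after which an Aubin--Lions compactness argument delivers the limit solution $(b,u)$ with the claimed regularity. Uniqueness in (ii) then follows from the same structural cancellation applied in $L^2$ to the difference of two solutions.

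For the blow-up criterion (iii), suppose $T^\ast<\infty$ but $\int_0^{T^\ast}(\|\na u\|_{L^\infty}+\|b\|_{L^\infty}^2)\,dt<\infty$. Running the $H^s$ energy estimate directly on $(b,u)$, the crucial observation is to use $\dive b=0$ to rewrite $b\cdot\na b=\dive(b\otimes b)$; integrating by parts against $\Lambda^s u$ then bounds the magnetic contribution to the $u$-equation by $C\|b\|_{L^\infty}\|b\|_{H^s}\|\na u\|_{H^s}$, which is absorbed into the parabolic dissipation $\|\na u\|_{H^s}^2$ at the cost of $C\|b\|_{L^\infty}^2\|b\|_{H^s}^2$. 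The transport commutators produce the standard $\|\na u\|_{L^\infty}(\|b\|_{H^s}^2+\|u\|_{H^s}^2)$ factor. Combining,
\[
\tfrac{d}{dt}\bigl(\|b\|_{H^s}^2+\|u\|_{H^s}^2\bigr)+\tfrac12\|\na u\|_{H^s}^2\lesssim\bigl(\|\na u\|_{L^\infty}+\|b\|_{L^\infty}^2\bigr)\bigl(\|b\|_{H^s}^2+\|u\|_{H^s}^2\bigr),
\]
so Gronwall bounds $\|(b,u)(T^\ast)\|_{H^s}$; reapplying (i) from time $T^\ast-\e$ extends the solution past $T^\ast$, contradicting the definition of the life span.

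The main obstacle is ensuring the criterion is quadratic in $\|b\|_{L^\infty}$ rather than linear in $\|\na b\|_{L^\infty}$, since the transport equation for $b$ admits no intrinsic smoothing and a naive commutator on $b\cdot\na b$ produces an unavoidable $\|\na b\|_{L^\infty}$ factor. Avoiding this demands the systematic use of $\dive b=0$ to put $b\cdot\na b$ in divergence form, together with an integration by parts that shifts one derivative onto $u$ so that the endpoint term $\|b\|_{L^\infty}\|b\|_{H^s}\|\na u\|_{H^s}$ can be absorbed by $\|\na u\|_{H^s}^2$; this parabolic regularization lives entirely in $u$, and coupling it to the magnetic variable through the structural cancellation above is the most delicate step of the proof.
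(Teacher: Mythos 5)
The paper does not supply a proof of this proposition: it is recalled verbatim as Proposition 6.1 of~\cite{XZ15}, so there is no argument here to compare against; I can only assess your outline on its own terms.

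Your overall architecture (mollified ODE system, uniform $H^s$ estimate, $L^2$ contraction for uniqueness, BKM-type continuation) is the right template, and you correctly identify that the heart of the matter is to keep the criterion quadratic in~$\|b\|_{L^\infty}$ rather than linear in~$\|\na b\|_{L^\infty}$. However, two concrete steps as you wrote them do not hold. First, the claim that $H^s\hookrightarrow W^{1,\infty}$ for $s>5/2$, ``supplemented by a standard interpolation for $3/2<s\le 5/2$,'' is not a real argument: there is no interpolation recovering $W^{1,\infty}$-control from a single $H^s(\R^3)$ norm when $s\le 5/2$. For $u$, the Lipschitz bound must come out of the \emph{parabolic} dissipation $\na u\in L^2_tH^s$, not from Sobolev embedding of $u(t)\in H^s$; for $b$, there is no smoothing to appeal to, and one must avoid $\|\na b\|_{L^\infty}$ altogether.

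Second, and more seriously, the statement ``the transport commutators produce the standard $\|\na u\|_{L^\infty}(\|b\|_{H^s}^2+\|u\|_{H^s}^2)$ factor'' is false for the term you actually need, namely $[\Lambda^s,u\cdot\na]b$ in the $b$-equation. The textbook Kato--Ponce bound gives
$\|[\Lambda^s,u]\cdot\na b\|_{L^2}\lesssim \|\na u\|_{L^\infty}\|b\|_{H^s}+\|u\|_{H^s}\|\na b\|_{L^\infty}$,
and the second term is fatal for $3/2<s\le 5/2$. Your discussion in the final paragraph attributes the $\|\na b\|_{L^\infty}$ obstruction to $b\cdot\na b$, but that term is in fact benign once rewritten as $\dv(b\otimes b)$ and integrated by parts; the genuine obstruction is this convection commutator (and, if one instead uses the magnetic cancellation you invoked, the commutators $[\Lambda^s,b\cdot\na]u$ and $[\Lambda^s,b\cdot\na]b$, which have the same defect). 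What is needed, and what you do not state, is a \emph{non-standard} commutator estimate of Bony type which trades a derivative from $b$ to $u$:
\[
\|[\Lambda^s,u]\cdot\na b\|_{L^2}\ \lesssim\ \|\na u\|_{L^\infty}\|b\|_{H^s}\ +\ \|b\|_{L^\infty}\|u\|_{H^{s+1}}\,,
\]
obtained by bounding the paraproduct blocks $S_{j-1}\na b$ by $2^{j}\|b\|_{L^\infty}$ rather than by $\|\na b\|_{L^\infty}$. Only then does the term $\|b\|_{L^\infty}\|\na u\|_{H^s}\|b\|_{H^s}$ appear and get absorbed by the dissipation via Young's inequality, producing the $\|b\|_{L^\infty}^2\|b\|_{H^s}^2$ contribution to the right-hand side. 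Without this refinement both your uniform a priori estimate in (i) and the continuation argument in (iii) break down in the regularity range $3/2<s\le 5/2$ that the proposition actually covers.
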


\begin{proof}[Proof of  Theorem \ref{thm1}] Given initial
data $(b_0,u_0)$ which satisfies the assumptions of Theorem
\ref{thm1}, we deduce from Proposition \ref{p9} that \eqref{1.1} has
a unique solution $(b,u)$ on $[0,T^\ast[$ such that for any
$T<T^\ast,$ \beno  b-e_3\in C([0,T]; H^{s}(\R^3)),\quad  u\in
C([0,T]; H^{s}(\R^3)) \with \na u\in L^2((0,T);H^s(\R^3)). \eeno
Moreover, it follows from the transport equation of \eqref{1.1} that
\beno \|b(t)\|_{L^\infty}\leq \|b_0\|_{L^\infty}\exp\left(\|\na
u\|_{L^1_t(L^\infty)}\right). \eeno Therefore, by virtue of
Proposition \ref{p9}, in order to complete the existence part of
Theorem \ref{thm1}, it remains to prove that \beq \label{p9eq1}
\int_0^{T^\ast}\|\na u(t)\|_{L^\infty}\,dt<\infty. \eeq Toward this,
we introduce the equivalent Lagrangian formulation \eqref{1.2},
which has been presented in details in Section \ref{Sect2}. Indeed,
according to the derivation in  Section \ref{Sect2}, especially
\eqref{S2eq13} and \eqref{CH}, one has \beq\label{S4eq26}
\begin{split}
&Y_1(z)=u_0(y_\h(z_\h,w_3(z)),w_3(z))\andf
Y_t(t,y)=u(t,y+Y(t,y)) \with\\
&Y(t,(y_\h(z_\h,w_3(z)),w_3(z)))=\wt{Y}(z)+\bar{Y}(t,z),
\end{split} \eeq with $\wt{Y}(z)$ and $\bar{Y}(t,z)$ being
determined by \eqref{S4eq17} and \eqref{1.2b} respectively.

On the other hand,  let us recall (A.3) of \cite{XZ15} that
\beq\label{qqqp} \|u\circ\Phi\|_{\dot{B}^s_{p,r}}\leq
C(\|\na\Psi\|_{L^\infty})\|u\|_{\dot{B}^s_{p,r}}\quad{for}\ \ s\in
]-1,1[. \eeq While it follows from \eqref{1.2fg}, \eqref{Apeq1} and
\eqref{Apeq5} that \beq\label{S7eq1} \|\cB\|_{L^\infty}\leq
\bigl\|\f{\p z}{\p w}\bigr\|_{L^\infty}\bigl\|\f{\p y}{\p
w}\bigr\|_{L^\infty}^{-1}\leq 2\quad \mbox{for}\ \ \e\leq \e_0. \eeq
Thus by virtue of \eqref{1.2fk},  Lemma \ref{L1} and \eqref{qqqp},
we infer \beno
\begin{split}
\|Y_1\|_{\cB^{0,\f12}}\leq
C\bigl(\|\cB\|_{L^\infty}\bigr)\|u_0\|_{\dB^{\f12}}\leq C
\|u_0\|_{\dB^{\f12}}. \end{split} \eeno So that under the assumption
of \eqref{S1eq3}, we deduce from \eqref{S4eq17a} and Proposition
\ref{Lip} that for any $t<T^\ast$ and $\e\leq \e_0,$ \beq
\label{S4eq28} \|\na \bY\|_{L^\infty_t(L^\infty)}+\|\na
\bY_t\|_{L^1_t(L^\infty)}\lesssim \|\na
\bY\|_{\wt{L}^\infty_t(\cB^{1,\f12})}+\|
\bY_t\|_{L^1_t(\cB^{2,\f12})}\leq Cc_0. \eeq
 Hence it
follows from \eqref{1.1f} and \eqref{S4eq26}  that for any
$T<T^\ast$ \beno
\begin{split}
\int_0^{T}\|\na u(t)\|_{L^\infty}\,dt\leq
&\int_0^{T}\bigl\|\cA^t\na_y Y_t(t)\|_{L^\infty}\,dt \leq \int_0^T
\bigl\|\cA^t\cB^t\na_z \bar{Y}_t(t)\|_{L^\infty}\,dt\\
\leq &\bigl(1+\|\na
Y\|_{L^\infty_t(\cB^{1,\f12})}+\|\na\wt{Y}\|_{\cB^{1,\f12}}\bigr)\|\cB\|_{L^\infty_t(L^\infty)}
\int_0^T\|\na_z\bar{Y}_t(t)\|_{L^\infty}\,dt\\
\leq &C\int_0^T\|\na_z \bar{Y}_t(t)\|_{L^\infty}\,dt\leq Cc_0.
\end{split}
\eeno This proves \eqref{p9eq1} and thus the global existence part
of Theorem \ref{thm1} is proved.

In the case when $b_0=e_3,$ by virtue of \eqref{1.1b}, \eqref{1.1c}
and \eqref{1.1f}, we find that $Y(t,y)$ determined by \eqref{1.1f}
solves the System \eqref{1.2c}. Moreover, there holds \beq
\label{S5eq30} Y_0=0,\ \ b(t,X(t,y))=e_3+\p_3Y(t,y) \andf
u(t,X(t,x))=Y_t(t,y). \eeq Then under the assumptions of
\eqref{S1eq3} and \eqref{S1eq5}, there hold the Inequalities
\eqref{Lip0qd} and \eqref{S2eq27} so that by virtue of Theorem
\ref{S2thm1}, \eqref{1.2c} has a unique global solution $Y$ which
satisfies \eqref{S2eq29} and \eqref{S2eq26}.

On the other hand,  due to $\dv\, u=0,$ we have \beno
\begin{split}
&\|b(t,\cdot)-e_3\|_{L^2}=\|b(t,X(t,\cdot))-e_3\|_{L^2}=\|\p_3Y(t)\|_{L^2},\\
&\|\na b(t,\cdot)\|_{L^2}=\|\cA^t\na
b(t,X(t,\cdot))\|_{L^2}=\|\cA^t\na\p_3 Y(t)\|_{L^2}\lesssim
\|\na\p_3 Y(t)\|_{L^2},
\end{split}
\eeno and \beno
\begin{split}
\|\na^2 b(t,\cdot)\|_{L^2}=&\|(\cA^t\na)^2\p_3
Y(t)\|_{L^2}\\
\leq & C\bigl(1+\|\na Y\|_{L^\infty_t(L^\infty)}+\|\na^2
Y\|_{L^\infty_t(L^\infty)}\bigr) \|\na\p_3 Y(t)\|_{H^1}\\
\leq & C\bigl(1+\|\na
Y\|_{L^\infty_t(\cB^{1,\f12})}+\|\na^2Y\|_{L^\infty_t(\cB^{\f32,0})}\bigr)\|\na\p_3
Y(t)\|_{H^1}\leq C\|\na\p_3 Y(t)\|_{H^1}.
\end{split} \eeno
Exactly along the same line, one has \beno
\begin{split}
&\|u(t,\cdot)\|_{L^2}=\|u(t,X(t,\cdot))\|_{L^2}=\|Y_t(t)\|_{L^2},\\
&\|\na u(t,\cdot)\|_{L^2}=\|\cA^t\na
u(t,X(t,\cdot))\|_{L^2}=\|\cA^t\na Y_t(t)\|_{L^2}\lesssim
\|\na Y_t(t)\|_{L^2},\\
&\|\na^2 u(t,\cdot)\|_{L^2}=\|(\cA^t\na)^2 Y_t(t)\|_{L^2}\lesssim
\|\na Y_t(t)\|_{H^1},
\end{split}
\eeno which together with \eqref{S2eq26} ensures \eqref{S1eq4}. This
completes the proof of Theorem \ref{thm1}.
\end{proof}

\appendix

\setcounter{equation}{0}
\section{The proof of Lemma  \ref{S4lem3} }\label{appenda}

 In this
section, we always denote $\phi=(\phi_\h,\phi_3), y=(y_\h,y_3)$ and
$z=(z_\h,z_3).$ The proof of Lemma \ref{S4lem3} will be based on the
following two lemmas.

\begin{lem}\label{Saplem1}
{\sl Let $y(w)$ be determined by \eqref{S2eq13} and $g(z_\h)\eqdefa
\int_0^KG_1(y_\h(z_\h,y_3),y_3)\,dy_3.$ Then for $\e\leq \e_\al,$
which depends only on $\|\na\phi\|_{W^{2,\infty}},$ one has \beq
\label{Apeq3} \|g_1\|_{H^2_\h}\leq CK\Bigl(\sum_{|\al|\leq
2}\|\na_\h^\al G_1\|_{L^\infty_\v(L^2_\h)}\Bigr) \andf \|\na_\h
g_1\|_{H^2_\h}\leq CK\Bigl(\sum_{|\al|\leq 2}\|\na_\h^\al\na_\h
G_1\|_{L^\infty_\v(L^2_\h)}\Bigr). \eeq}
\end{lem}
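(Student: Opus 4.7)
\medskip

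\noindent\textbf{Plan of proof.} The main observation is that, according to \eqref{S2eq13}, for each fixed $y_3$ the map $z_\h\mapsto y_\h(z_\h,y_3)$ is an $\cO(\e)$ perturbation of the identity on $\R^2$: when $\e=0$, one has $b_0=e_3$, the right-hand sides of the ODE's in \eqref{S2eq13} vanish, and $y_\h=w_\h=z_\h$. My plan is to differentiate under the integral sign, apply Minkowski's inequality to bring the $L^2_\h$ norm inside the $y_3$ integration, and then perform a change of variables $z_\h\mapsto y_\h(z_\h,y_3)$ at each fixed $y_3$ to recover the natural $L^2_\h$ norm of $G_1(\cdot,y_3)$.

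\medskip

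First I would establish that, for $\e\le\e_\al$ with $\e_\al$ depending only on $\|\na\phi\|_{W^{2,\infty}}$, there is a constant $C$ such that for every $y_3\in[0,K]$ and every multi-index $\al$ with $|\al|\le 3$,
\[
\Bigl\|\frac{\p^\al y_\h(\cdot,y_3)}{\p z_\h^\al}\Bigr\|_{L^\infty_\h}\le C,\qquad \frac12\le \det\Bigl(\frac{\p y_\h}{\p z_\h}\Bigr)\le 2.
\]
This follows by differentiating the ODE system \eqref{S2eq13} (recalling $z_\h=w_\h$) in the parameter $z_\h$: the first derivative $\p y_\h/\p z_\h$ satisfies a linear ODE whose coefficients are $\cO(\e\|\na\phi\|_{L^\infty})$; successive differentiation produces linear ODE's for $\p^\al y_\h/\p z_\h^\al$ whose forcing terms are polynomials in lower-order derivatives, with coefficients involving at most $\na^{|\al|}\phi$. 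Gronwall's inequality on $[0,K]$ then yields the claimed bounds whenever $\e K\|\na\phi\|_{W^{2,\infty}}$ is small.

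\medskip

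Next, for any multi-index $\beta$ with $|\beta|\le 2$, the chain rule expresses $\na_\h^\beta g_1(z_\h)$ as a finite linear combination of integrals
\[
\int_0^K(\na_\h^\ga G_1)(y_\h(z_\h,y_3),y_3)\,\Phi_{\beta,\ga}(z_\h,y_3)\,dy_3,\quad |\ga|\le|\beta|,
\]
where $\Phi_{\beta,\ga}$ is a polynomial in the derivatives $\p^{\al'}y_\h/\p z_\h^{\al'}$ with $|\al'|\le|\beta|$, hence uniformly bounded by Step~1. Minkowski's inequality gives $\|\na_\h^\beta g_1\|_{L^2_\h}\lesssim \int_0^K\|(\na_\h^\ga G_1)(y_\h(\cdot,y_3),y_3)\|_{L^2_\h}\,dy_3$, and the change of variables at fixed $y_3$, together with the Jacobian bound, yields $\|(\na_\h^\ga G_1)(y_\h(\cdot,y_3),y_3)\|_{L^2_\h}\le C\|\na_\h^\ga G_1(\cdot,y_3)\|_{L^2_\h}$. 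Integrating over $[0,K]$ and summing over $|\ga|\le 2$ produces the first estimate of \eqref{Apeq3}. The second estimate follows by the same argument applied with $G_1$ replaced by $\na_\h G_1$, after moving one horizontal derivative onto $G_1$ via the chain rule (which requires one more derivative of $y_\h$ in $z_\h$, available by the $|\al|\le 3$ range in Step~1).

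\medskip

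The main technical point will be Step~1, namely tracking the nonlinear cascade of ODE's satisfied by the higher derivatives $\p^\al y_\h/\p z_\h^\al$ and ensuring they close with coefficients controlled by $\|\na\phi\|_{W^{2,\infty}}$; this regularity level on $\phi$ is precisely what allows three derivatives on $y_\h$ and thus only the sum $|\al|\le 2$ on the right of \eqref{Apeq3}.
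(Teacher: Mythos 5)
Your proposal is correct and follows essentially the same route as the paper: bound the derivatives $\partial^\alpha y_\h/\partial z_\h^\alpha$ for $|\alpha|\le 3$ by $C$ uniformly in $y_3$, then apply the chain rule, Minkowski's inequality, and a horizontal change of variables with controlled Jacobian. The only cosmetic difference is in the first step, where you propose Gronwall on the differentiated ODE system while the paper inverts the integrated relation $\frac{\partial y}{\partial w}=(Id-A_2)^{-1}A_1$ from \eqref{CHab}--\eqref{1.2fg} using the smallness of $A_2$; these are equivalent ways of deriving the same bounds.
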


\begin{proof} We first deduce from \eqref{CHab} that \beq
\label{Apeq0} \|A_2\|_{L^\infty}\leq \e
K\bigl\|\na_\h\bigl(\f{\phi_\h}{1+\e\phi_3}\bigr)\bigr\|_{L^\infty}\leq
3K\e\|\na_\h\phi\|_{L^\infty}\leq \f12, \eeq whenever $\e\leq
\e_a\eqdefa
\min\bigl(\f1{4\|\phi_3\|_{L^\infty}},\f1{6K\|\na_\h\phi\|_{L^\infty}}\bigr).$
Then by virtue of \eqref{1.2fg}, we have \beq \label{Apeq1} \f13\leq
\f{\|A_1\|_{L^\infty}}{1+\|A_2\|_{L^\infty}} \leq \bigl\|\f{\p y}{\p
w}\bigr\|_{L^\infty}\leq
\f{\|A_1\|_{L^\infty}}{1-\|A_2\|_{L^\infty}}\leq 2. \eeq
 Similarly, we deduce from
\eqref{CHab} that
 \beq \label{Apeq2}
  \begin{split} \bigl\|\f{\p^2 y}{\p w^2}\bigr\|_{L^\infty}\leq
 &
 \bigl\|(Id-A_2)^{-1}\bigr\|_{L^\infty}\left(\bigl\|\f{\p A_1}{\p
 y}\bigr\|_{L^\infty}+\bigl\|\f{\p A_2}{\p
 y}\bigr\|_{L^\infty} \bigl\|\f{\p y}{\p
w}\bigr\|_{L^\infty}\right) \bigl\|\f{\p y}{\p
w}\bigr\|_{L^\infty}\\
\leq &C\e\sum_{|\al|\leq 1}\|\na^\al\na\phi\|_{L^\infty}\leq 1,
\end{split}
\eeq and \beq \label{Apeq2a}
  \begin{split} \bigl\|\f{\p^3 y}{\p w^3}\bigr\|_{L^\infty}\leq
 &
 C\bigl\|(Id-A_2)^{-1}\bigr\|_{L^\infty}\left(\Bigl(\bigl\|\f{\p^2 A_1}{\p
 y^2}\bigr\|_{L^\infty}+\bigl\|\f{\p^2 A_2}{\p
 y^2}\bigr\|_{L^\infty} \bigl\|\f{\p y}{\p
w}\bigr\|_{L^\infty}\Bigr)\bigl\|\f{\p y}{\p
w}\bigr\|_{L^\infty}\right.\\
&\left.+ \Bigl(\bigl\|\f{\p A_1}{\p
 y}\bigr\|_{L^\infty}+\bigl\|\f{\p A_2}{\p
 y}\bigr\|_{L^\infty} \bigl\|\f{\p y}{\p
w}\bigr\|_{L^\infty}\Bigr)\bigl\|\f{\p^2 y}{\p
w^2}\bigr\|_{L^\infty}\right) \\
\leq &C\e\sum_{|\al|\leq 2}\|\na^\al\na\phi\|_{L^\infty}\leq 1,
\end{split}
\eeq provided that $\e\leq \e_\al\eqdefa
\min\bigl(\e_a,C^{-1}\bigl(\sum_{|\al|\leq
2}\|\na^\al\na\phi\|_{L^\infty}\bigr)^{-1}\bigr).$

 Then
thanks to \eqref{Apeq1}, one has \beno \|g_1\|_{L^2_\h}\leq CK
\|G_1\|_{L^\infty_\v(L^2_\h)}. \eeno While for $k,\ell,m \in
\{1,2\},$ we have \beno \f{\p G_1(y_\h(z_\h,y_3),y_3)}{\p
z_k}=\sum_{i=1}^2 \f{\p G_1}{\p y_i}(y_\h(z_\h,y_3),y_3)\f{\p
y_i(z_\h,y_3)}{\p z_k}, \eeno and  \beno \begin{split} \f{\p^2
G_1(y_\h(z_\h,y_3),y_3)}{\p z_\ell\p z_k}=&\sum_{i=1}^2 \f{\p
G_1}{\p y_i}(y_\h(z_\h,y_3),y_3)\f{\p^2 y_i}{\p z_\ell\p
z_k}+\sum_{i,j=1}^2 \f{\p^2 G_1}{\p y_j\p
y_i}(y_\h(z_\h,y_3),y_3)\f{\p y_i}{\p z_k}\f{\p y_i}{\p z_\ell},
\end{split}
\eeno and
 \beno \begin{split} &\f{\p^3
G_1(y_\h(z_\h,y_3),y_3)}{\p z_m\p z_\ell\p z_k}=\sum_{i=1}^2 \f{\p
G_1}{\p
y_i}(y_\h(z_\h,y_3),y_3)\f{\p^3 y_i}{\p z_m\p z_\ell\p z_k}\\
&\qquad\qquad+\sum_{i,j=1}^2 \f{\p^2 G_1}{\p y_j\p
y_i}(y_\h(z_\h,y_3),y_3)\Bigl(\f{\p^2 y_i}{\p z_\ell\p z_k}\f{\p
y_j}{\p z_m}+\f{\p^2 y_i}{\p z_k\p
z_m}\f{\p y_j}{\p z_\ell}\\
&\qquad\qquad+\f{\p y_i}{\p z_k}\f{\p^2 y_j}{\p z_m\p
z_\ell}\Bigr)+\sum_{i,j,l=1}^2 \f{\p^3 G_1}{\p y_l\p y_j\p
y_i}(y_\h(z_\h,y_3),y_3)\f{\p y_l}{\p z_m}\f{\p y_i}{\p z_k}\f{\p
y_j}{\p z_\ell},
\end{split}
\eeno from which, \eqref{Apeq1}, \eqref{Apeq2} and \eqref{Apeq2a},
we infer \beno \|\na_\h g_1\|_{L^2_\h}\leq CK\|\na_\h
G_1\|_{L^\infty_\v(L^2_\h)}, \eeno and \beno
\begin{split}
\|\na_\h^2g_1\|_{L^2_\h}\leq
&C\int_0^K\left(\bigl\|\na_\h^2G_1(y_\h(\cdot,y_3),y_3)\bigr\|_{L^2_\h}\bigl\|\na_\h
y_\h\|_{L^\infty}^2\right.\\
&\qquad\quad\left.+\bigl\|\na_\h
G_1(y_\h(\cdot,y_3),y_3)\bigr\|_{L^2_\h}\bigl\|\na_\h^2
y_\h\|_{L^\infty}\right)\,dy_3\\
\leq &CK\left(\|\na_\h G_1\|_{L^\infty_\v(L^2_\h)}+\|\na_\h^2
G_1\|_{L^\infty_\v(L^2_\h)}\right).
\end{split}
\eeno Similarly, one has \beno \|\na_\h^3g_1\|_{L^2_\h}\leq
CK\sum_{|\al|\leq 2} \|\na_\h^\al\na_\h G_1\|_{L^\infty_\v(L^2_\h)}.
\eeno As a consequence, we achieve \eqref{Apeq3}.
\end{proof}

\begin{lem}\label{Saplem2}
{\sl Let $y(w)$ and $w(z)$ be determined respectively by
\eqref{S2eq13} and \eqref{CH}, let $g_2(z)\eqdefa G_2(y(w(z))).$
Then for $\e\leq \e_\beta,$ which depends only on $
\|\na\phi\|_{W^{2,\infty}},$ one has \beq \label{Apeq7}
\|g_2\|_{\cB^{1,\f12}}\leq C\|g_2\|_{\dB^{\f32}}\leq C\|\na
G_2\|_{L^2}^{\f12}\|\na G_2\|_{H^1}^{\f12}. \eeq} \end{lem}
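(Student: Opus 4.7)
The statement consists of two inequalities that I would treat separately. The first inequality $\|g_2\|_{\cB^{1,\f12}}\lesssim\|g_2\|_{\dB^{\f32}}$ is an immediate application of Lemma \ref{L1} with the parameters $\tau_1=1$ and $\tau_2=\f12$, so that $\tau_1+\tau_2=\f32$ and $\tau_2>0$ are satisfied.

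For the second inequality, the plan is to interpolate $\dB^{\f32}=\dB^{\f32}_{2,1}$ between $\dot H^1$ and $\dot H^2$, and then transfer Sobolev norms from $g_2$ back to $G_2$ via the chain rule. For the interpolation step, I would split the dyadic sum
$$\sum_j 2^{\f{3j}{2}}\|\D_j g_2\|_{L^2}=\sum_{j\le N}2^{\f{j}{2}}\bigl(2^{j}\|\D_j g_2\|_{L^2}\bigr)+\sum_{j>N}2^{-\f{j}{2}}\bigl(2^{2j}\|\D_j g_2\|_{L^2}\bigr),$$
apply Cauchy--Schwarz to each piece to obtain an upper bound of the form $2^{N/2}\|g_2\|_{\dot H^1}+2^{-N/2}\|g_2\|_{\dot H^2}$, and optimize $2^{N}\sim \|g_2\|_{\dot H^2}/\|g_2\|_{\dot H^1}$ (in the same spirit as the proof of Lemma \ref{lem1.1}) to deduce
$$\|g_2\|_{\dB^{\f32}}\lesssim \|g_2\|_{\dot H^1}^{\f12}\|g_2\|_{\dot H^2}^{\f12}.$$

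For the chain-rule step, setting $X(z)\eqdef y(w(z))$, one has $\na_z g_2=\bigl(\tfrac{\p X}{\p z}\bigr)^t(\na G_2)\circ X$ and an analogous expression for $\na_z^2 g_2$ containing an additional term involving $\tfrac{\p^2 X}{\p z^2}$. Combining the $L^\infty$ bounds \eqref{Apeq1}--\eqref{Apeq2a} on $\tfrac{\p y}{\p w}$ and $\tfrac{\p^2 y}{\p w^2}$ already established in the proof of Lemma \ref{Saplem1} with the analogous bounds on $\tfrac{\p w}{\p z}$ and $\tfrac{\p^2 w}{\p z^2}$ (which follow directly from differentiating \eqref{1.2fh} and using smallness of $\e$ to absorb the lower-triangular non-$\Id$ part of $A_3$), one obtains $\|\na_z X\|_{L^\infty}+\|\na_z^2 X\|_{L^\infty}\lesssim 1$ and $\|\det\cB\|_{L^\infty}\sim 1$. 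Changing variables $y=X(z)$ then gives
$$\|g_2\|_{\dot H^1}\lesssim \|\na G_2\|_{L^2},\qquad \|g_2\|_{\dot H^2}\lesssim \|\na^2 G_2\|_{L^2}+\|\na G_2\|_{L^2}\lesssim \|\na G_2\|_{H^1},$$
which combined with the interpolation estimate above yields the second inequality.

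The only mildly technical point will be tracking how $\e_\beta$ depends on $\|\na\phi\|_{W^{2,\infty}}$ when ensuring $\|\na^2 X\|_{L^\infty}\lesssim 1$; this is entirely parallel to the derivation of \eqref{Apeq2a} and therefore I do not expect any serious obstacle.
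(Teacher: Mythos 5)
Your proposal is correct and follows essentially the same route as the paper's proof: Lemma~\ref{L1} gives the embedding $\cB^{1,\f12}\hookrightarrow\dB^{\f32}$, a dyadic interpolation yields $\|g_2\|_{\dB^{\f32}}\lesssim\|g_2\|_{\dot H^1}^{\f12}\|g_2\|_{\dot H^2}^{\f12}$, and the chain rule together with the $L^\infty$ Jacobian bounds \eqref{Apeq1}--\eqref{Apeq2a} plus a change of variables transfers the Sobolev norms of $g_2$ to those of $\na G_2$. The paper obtains the second-derivative bound $\|\p^2 w_3/\p z^2\|_{L^\infty}\lesssim 1$ by implicitly differentiating the defining relation from \eqref{CH} rather than by directly differentiating $A_3^{-1}$ as you suggest, but both lead to the same estimate and this is a presentational difference, not a gap.
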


\begin{proof} It follows from \eqref{S2eq5} and  \eqref{1.2fh}
that \beq \label{Apeq5}
\begin{split}
\bigl\|\f{\p z}{\p w}-Id\bigr\|_{L^\infty}\leq
&C\int_{0}^{K}\bigl\|\na_\h\bigl(\f1{1+\e\phi_3}\bigr)(y_\h(\cdot,w_3),w_3)\bigr\|_{L^\infty}\bigl\|\f{\p
y_\h}{\p
w_\h}\bigr\|_{L^\infty}\,dw_3+\bigl\|\f{\e\phi_3}{1+\e\phi_3}\bigr\|_{L^\infty}\\
\leq &
C\e\bigl(\|\phi_3\|_{L^\infty}+\|\na_\h\phi_3\|_{L^\infty}\bigr)\leq
\f12,
\end{split}
\eeq  provided that  $\e\leq \e_b\eqdefa
\min\bigl(\e_\al,(2C)^{-1}\bigl(\|\phi_3\|_{L^\infty}+\|\na_\h\phi_3\|_{L^\infty}\bigr)^{-1}\bigr)$
for $\e_\al$ given by Lemma \ref{Saplem1}.

While by virtue of \eqref{CH} and \eqref{1.2fk}, we have \beno
z_3=w_3(z)-\e\int_0^{w_3(z)}\Phi_\e(y_\h(z_\h,y_3),y_3)\,dy_3,\eeno
where $\Phi_\e(y)\eqdefa \f{\phi_3(y)}{1+\e\phi_3(y)}.$ Then due to
\eqref{1.2fk}, for $k,\ell\in \{1,2\},$ we have \beno
\bigl(1-\e\Phi_\e(y_\h(z_\h,w_3(z)),w_3(z))\bigr)\f{\p w_3(z)}{\p
z_k}=\e\int_0^{w_3(z)}\na_\h\Phi_\e(y_\h(z_\h,y_3),y_3)\cdot\f{\p
y_\h}{\p z_k}(z_\h,y_3)dy_3, \eeno and \beno
\begin{split}
&\bigl(1-\e\Phi_\e(y(w(z)))\bigr)\f{\p^2w_3}{\p z_k\p
z_\ell}-\e\f{\p \Phi_\e}{\p y_3}(y(w(z)))\f{\p
w_3}{\p z_k}\f{\p w_3}{\p z_\ell}\\
&-\e\na_\h\Phi_\e(y(w(z)))\cdot\left(\f{\p y_\h}{\p
z_\ell}(z_\h,w_3(z))+\f{\p y_\h}{\p w_3}(z_\h,w_3(z))\f{\p
w_3}{\p z_\ell}\right)\f{\p w_3}{\p z_k}\\
&=\e\na_\h\Phi_\e(y(w(z)))\cdot\f{\p y_\h}{\p z_k}(z_\h,w_3(z))\f{\p
w_3}{\p
z_\ell}+\e\int_0^{w_3(z)}\na_\h\Phi_\e(y_\h(z_\h,y_3),y_3)\cdot\f{\p^2
y_\h}{\p z_\ell\p z_k}(z_\h,y_3)dy_3\\
&+\e\sum_{i,j=1}^2\int_0^{w_3(z)}\f{\p^2\Phi_\e}{\p y_i\p
y_j}(y_\h(z_\h,y_3),y_3)\f{\p y_j}{\p z_\ell}(z_\h,y_3)\f{\p y_i}{\p
z_k}(z_\h,y_3)dy_3.
\end{split}
\eeno Note that $\e\|\phi_3\|_{L^\infty}\leq \f14,$ for $k,\ell\in
\{1,2\},$ we have \beno
\begin{split}
\bigl\|\f{\p^2 w_3}{\p z_k\p z_\ell}\bigr\|_{L^\infty}\leq
C\e\Bigl(&\|\na\Phi_\e\|_{L^\infty}\bigl(\bigl\|\f{\p w_3}{\p
z}\bigr\|_{L^\infty}+\bigl\|\f{\p y}{\p
w}\bigr\|_{L^\infty}+\bigl\|\f{\p y}{\p
w}\bigr\|_{L^\infty}\bigl\|\f{\p w_3}{\p
z}\bigr\|_{L^\infty}\bigr)\bigl\|\f{\p w_3}{\p
z}\bigr\|_{L^\infty}\\
&+K\|\na\Phi\|_{L^\infty}\bigl\|\f{\p^2y}{\p
w^2}\bigr\|_{L^\infty}+K\|\na^2\Phi\|_{L^\infty}\bigl\|\f{\p y}{\p
w}\bigr\|_{L^\infty}^2\Bigr).
\end{split}
\eeno so that by virtue of \eqref{Apeq1} and \eqref{Apeq2}, we infer
\beq \label{Apeq6} \bigl\|\f{\p^2 w_3}{\p z_k\p
z_\ell}\bigr\|_{L^\infty}\leq   C\e
\bigl(\|\na\phi_3\|_{L^\infty}+\|\na^2\phi_3\|_{L^\infty}\bigr)\leq
1,\eeq provided that $\e\leq \e_\beta\eqdefa \min\bigl(\e_b,
C^{-1}(\|\na\phi_3\|_{L^\infty}+\|\na^2\phi_3\|_{L^\infty})^{-1}\bigr).$
 Similar calculation shows that \eqref{Apeq6} holds for all
$k,\ell\in \{1,2,3\}.$

On the other hand, for $k,\ell\in \{1,2\},$ one has \beno
\begin{split}
\f{\p g_2(z)}{\p z_k}=&\f{\p G_2}{\p
y_3}(y_\h(z_\h,w_3(z)),w_3(z))\f{\p w_3(z)}{\p z_k}\\
&+\sum_{i=1}^2\f{\p G_2}{\p
y_i}(y_\h(z_\h,w_3(z)),w_3(z))\left(\f{\p y_i}{\p z_k}+\f{\p y_i}{\p
w_3}\f{\p w_3(z)}{\p z_k}\right)(z_\h,w_3(z)),
\end{split}
\eeno and \beno
\begin{split}
\f{\p^2 g_2(z)}{\p z_k\p z_\ell}=&\f{\p^2 G_2}{\p
y_3^2}(y(w(z))\f{\p w_3}{\p z_\ell}\f{\p w_3}{\p z_k}+\f{\p G_2}{\p
y_3}(y(w(z))\f{\p^2 w_3}{\p z_\ell\p z_k}\\
&+\sum_{j=1}^2\f{\p^2 G_2}{\p y_3\p y_j}(y(w(z))\left(\f{\p y_j}{\p
z_\ell}(w(z))+\f{\p y_j}{\p w_3}(w(z))\f{\p w_3}{\p
z_\ell}\right)\\
&+\sum_{i,j=1}^2\f{\p^2 G_2}{\p y_i\p y_j}(y_(w(z))\left(\f{\p
y_j}{\p z_\ell}+\f{\p y_j}{\p w_3}\f{\p w_3}{\p
z_\ell}\right)\left(\f{\p y_i}{\p z_k}+\f{\p y_i}{\p w_3}\f{\p
w_3}{\p z_k}\right)(w(z))\\
&+\sum_{i=1}^2\f{\p G_2}{\p y_i}(y(w(z)),w_3(z))\left(\f{\p^2
y_i}{\p z_k\p z_\ell}+\f{\p^2 y_i}{\p w_3\p z_k}\f{\p
w_3}{\p z_\ell}\right.\\
&\qquad+\left.\f{\p^2 y_i}{\p w_3\p z_\ell}\f{\p w_3}{\p
z_k}+\f{\p^2 y_i}{\p w_3^2}\f{\p w_3}{\p z_k}\f{\p w_3}{\p
z_\ell}+\f{\p y_i}{\p w_3}\f{\p^2 w_3}{\p z_k\p
z_\ell}\right)(w(z)),
\end{split}
\eeno from which, \eqref{Apeq1}, \eqref{Apeq5} and \eqref{Apeq6}, we
deduce that \beno
\begin{split}
\|\na_\h g_2\|_{L^2}\leq &C\|\na G_2\|_{L^2}\bigl(1+\bigl\|\f{\p
y}{\p w}\bigr\|_{L^\infty}\bigr)\bigl(1+\bigl\|\f{\p w_3}{\p
z}\bigr\|_{L^\infty}\bigr)\bigl\|{\rm det}\bigl(\f{\p y}{\p
w}\bigr)\bigr\|_{L^\infty}\bigl\|{\rm
det}\bigl(\f{\p w}{\p z}\bigr)\bigr\|_{L^\infty}\\
\leq &C\|\na G_2\|_{L^2},
\end{split}
\eeno and \beno
\begin{split}
\|\na_\h^2g_2\|_{L^2}\leq &C \|\na^2 G_2\|_{L^2}\Bigl(\bigl\|\f{\p
y}{\p w}\bigr\|_{L^\infty}+\bigl\|\f{\p w_3}{\p
z}\bigr\|_{L^\infty}^2+\bigl\|\f{\p y}{\p
w}\bigr\|_{L^\infty}^2\bigl(1+\bigl\|\f{\p w_3}{\p
z}\bigr\|_{L^\infty}^2\bigr)\Bigr)\\
&+C\|\na G_2\|_{L^2}\Bigl(\bigl(1+\bigl\|\f{\p y}{\p
w}\bigr\|_{L^\infty}\bigr)\bigl\|\f{\p^2w_3}{\p
z^2}\bigr\|_{L^\infty}+\bigl(1+\bigl\|\f{\p w_3}{\p
z}\bigr\|_{L^\infty}^2\bigr)\bigl\|\f{\p^2y}{\p
w^2}\bigr\|_{L^\infty}\Bigr)\\
 \leq &C\bigl(\|\na G_2\|_{L^2}+\|\na^2 G_2\|_{L^2}\bigr).
\end{split}
\eeno Similar calculation shows that the above two estimates hold
for the full derivatives of $g_2.$ Hence by virtue of Lemma \ref{L1}
and the interpolation inequality in Besov space, we deduce
\eqref{Apeq7}. This finishes the proof of the lemma.
\end{proof}

Let us now turn to the proof of Lemmas  \ref{S4lem3} and
\ref{S4lem4}.

\begin{proof}[Proof of Lemma  \ref{S4lem3}]  By virtue of  \eqref{Apeq3}, for $A_2^\h$ given by  \eqref{S4eq14}, we get, by using
interpolation inequality in Besov spaces, that for $\e\leq \e_\al,$
\beq \label{Apeq4} \|A_2^\h\|_{\dB^1_\h}\leq C
\|A_2^\h\|_{H^2_\h}\leq C\e\Bigl(\sum_{|\al|\leq
2}\|\na_\h^\al\na_\h\phi \|_{L^\infty_\v(L^2_\h)}\Bigr). \eeq
Similarly by applying the second equality of \eqref{Apeq3}, we
deduce that, $\|A_3^\h\|_{\dB^1_\h},$ for $A_3^\h$ given by
\eqref{S4eq15}, satisfies the same estimate as
$\|A_2^\h\|_{\dB^1_\h}.$

 Note that for $b_0=e_3+\e\phi,$ we deduce from \eqref{CHab} and
\eqref{Apeq7} that  for $\e\leq\e_\beta$ \beq \label{Apeq10}
\|\frak{A}_1-Id\|_{\cB^{1,\f12}}\leq
C\e\|\na\phi\|_{L^2}^{\f12}\|\na\phi\|_{H^1}^{\f12}. \eeq Due to
\eqref{S4eq14}, \eqref{S4eq15} and the fact that $\eta$ is supported
on $[-2,K+2],$ we get, by a similar derivation of \eqref{Apeq7},
that \beq \label{Apeq8}
\|A_{2,1}\|_{\cB^{1,\f12}}+\|A_{3,1}\|_{\cB^{1,\f12}}\leq
C\e\|\na\phi\|_{H^2}. \eeq Let us take \beno
\e_2\eqdefa\min\Bigl(\e_\al,\e_\beta,C^{-1}\bigl(\|\na\phi\|_{H^2}+\|\na_\h\phi\|_{L^\infty_\v(H^2_\h)}\bigr)^{-1}\Bigr).
\eeno Then \eqref{Apeq8} together with \eqref{Apeq4} and
\eqref{Apeq10} leads to \eqref{S4eq30}.
\end{proof}

\begin{proof}[Proof of Lemma  \ref{S4lem4}] Let us denote
\beno \frak{Y}(z)\eqdefa \e
\eta(z_3)\int_{-1}^{z_3}\phi(y_\h(z_\h,w_3(z')),w_3(z'))dz_3' \with
z'=(z_\h,z_3'). \eeno Then  one has \beno \f{\p \frak{Y}(z)}{\p
z_3}=\e
\eta'(z_3)\int_{-1}^{z_3}\phi(y_\h(z_\h,w_3(z_\h,z_3')),w_3(z_\h,z_3'))\,dz_3'+\e\eta(z_3)\phi(y(w(z))),
\eeno and  \beno \begin{split} \f{\p^2 \frak{Y}(z)}{\p^2 z_3}=&\e
\eta''(z_3)\int_{-1}^{z_3}\phi(y(w(z')))dz_3'+2\e\eta'(z_3)\phi(y(w(z)))\\
&+\e\eta(z_3)\Bigl(\f{\p \phi}{\p y_\h}(y(w(z)))\cdot\f{\p y_\h}{\p
w_3}(w(z))\f{\p w_3}{\p z_3}(z)+\f{\p \phi}{\p y_3}(y(w(z)))\f{\p
w_3}{\p z_3}(z)\Bigr), \end{split} \eeno from which and
\eqref{Apeq1}, we infer \beq \label{Apeq12}
\|\p_3\frak{Y}\|_{L^2}\leq
C\e\bigl(\|\phi\|_{L^2}+\|\phi\|_{L^\infty_\v(L^2_\h)}\bigr). \eeq
Similarly, for $k=1,2,$ one has \beno
\begin{split}
\f{\p^2 \frak{Y}(z)}{\p z_3\p
z_k}=&\e\eta'(z_3)\int_{-1}^{z_3}\Bigl(\f{\p \phi}{\p
y_\h}(y(w(z')))\cdot\Bigl(\f{\p y_\h}{\p z_k}(z_\h,w(z'))+\f{\p
y_\h}{\p w_3}(z_\h,w(z'))\f{\p w_3}{\p z_k}(z')\Bigr)\\
&+\f{\p \phi}{\p y_3}(y(w(z')))\f{\p w_3}{\p z_k}(z')\Bigr)dz_3'+\e
\eta(z_3)\Bigl(\f{\p \phi}{\p y_3}(y(w(z)))\f{\p w_3}{\p
z_k}(z)\\
&+\f{\p \phi}{\p y_\h}(y(w(z)))\cdot\Bigl(\f{\p y_\h}{\p
z_k}(z_\h,w(z))+\f{\p y_\h}{\p w_3}(z_\h,w(z))\f{\p w_3}{\p
z_k}(z)\Bigr)\Bigr).
\end{split}
\eeno So that we obtain \beq\label{Apeq13}
\begin{split}
\|\na\p_3\frak{Y}\|_{L^2}\leq
&C\e\Bigl(\|\phi\|_{L^2}+\|\phi\|_{L^\infty_\v(L^2_\h)}+\bigl(\|\na\phi\|_{L^2}+\|\na\phi\|_{L^\infty_\v(L^2_\h)}\bigr)\\
&\qquad\qquad\qquad\qquad\qquad\qquad\times\bigl(1+ \bigl\|\f{\p
y}{\p w}\bigr\|_{L^\infty}\bigr)\bigl(1+ \bigl\|\f{\p
w}{\p z}\bigr\|_{L^\infty}\bigr)\Bigr)\\
\leq &C\e\bigl(\|\phi\|_{H^1}+\|\phi\|_{L^\infty_\v(H^1_\h)}\bigr).
\end{split}
\eeq Then by virtue of Lemma \ref{L1} and the classical
interpolation inequality in Besov spaces (see \cite{bcd}), we deduce
from \eqref{Apeq12} and \eqref{Apeq13} that \beq \label{Apeq14}
\begin{split}
\|\p_3\frak{Y}\|_{\cB^{0,\f12}}\leq
C\|\p_3\frak{Y}\|_{\dB^{\f12}}\leq &
C\|\p_3\frak{Y}\|_{L^2}^{\f12}\|\na\p_3\frak{Y}\|_{L^2}^{\f12}\\
\leq & C\e\bigl(\|\phi\|_{H^1}+\|\phi\|_{L^\infty_\v(H^1_\h)}\bigr).
\end{split} \eeq

On the other hand, it follows from a similar derivation of
\eqref{Apeq6} that \beq \label{Apeq15} \bigl\|\f{\p^3 w_3}{\p z_l\p
z_k\p z_\ell}\bigr\|_{L^\infty}\leq   C\e
\|\na\phi_3\|_{W^{2,\infty}}. \eeq Then for $\e\leq \e_3$ with
$\e_3$ being determined by \beq\label{Apeq16} \e_3\eqdefa
\min\Bigl(\e_2,
C^{-1}\bigl(1+\|\na\phi_3\|_{W^{2,\infty}}\bigr)^{-1},
C^{-1}\bigl(\|\phi\|_{H^3}+\|\phi\|_{L^\infty_\v(H^3_\h)}\bigr)^{-1}\Bigr),\eeq
we get, by a similar derivation of \eqref{Apeq14}, that \beno
\|\na\frak{Y}\|_{\cB^{1,\f12}}\leq
C\e\bigl(\|\phi\|_{H^3}+\|\phi\|_{L^\infty_\v(H^3_\h)}\bigr),\eeno
which together with \eqref{Apeq14} and \eqref{Apeq16}  ensures
\eqref{S4eq17a}.
\end{proof}

\medskip

\noindent {\bf Acknowledgments.} We would like to thank Professors
Fanghua Lin, Jiahong Wu, Zhifei Zhang, Zhen Lei and Li Xu for
profitable discussions on this topic. Part of this work was done
when H. Abidi was visiting Morningside Center of Mathematics of the
Academy of Mathematics and System Sciences, CAS  in the summer of
2015.
 He would like to thank the hospitality of the Center and AMSS. Both authors are supported by innovation grant from National
Center for Mathematics and Interdisciplinary Sciences.
 P. Zhang is partially supported by NSF  11371037.
\medskip

\end{document}